\date{January 25, 2013}
\newtheorem{theorem}{Theorem}[section]
\newtheorem{lemma}[theorem]{Lemma}
\newtheorem{proposition}[theorem]{Proposition}
\newtheorem{corollary}[theorem]{Corollary}
\newtheorem{definition}[theorem]{Definition}
\theoremstyle{remark}
\newtheorem{remark}[theorem]{Remark}
\newcommand{\Capa}{\operatorname{Cap}}
\newcommand{\diam}{\operatorname{diam}}
\newcommand{\grad}{\operatorname{grad}}
\newcommand{\Homeo}{\operatorname{Homeo}}
\newcommand{\id}{\operatorname{id}}
\newcommand{\supp}{\operatorname{supp}}
\newcommand{\cC}{{\mathcal C}}
\newcommand{\cG}{{\mathcal G}}
\newcommand{\cM}{{\mathcal M}}
\newcommand{\cS}{{\mathcal S}}
\newcommand{\cT}{{\mathcal T}}
\newcommand{\CC}{{\mathbb C}}
\newcommand{\DD}{{\mathbb D}}
\newcommand{\HH}{{\mathbb H}}
\newcommand{\RR}{{\mathbb R}}
\newcommand{\TT}{{\mathbb T}}
\newcommand{\ZZ}{{\mathbb Z}}
\renewcommand{\a}{\alpha}
\renewcommand{\d}{\delta}
\newcommand{\g}{\gamma}
\newcommand{\eps}{\epsilon}
\newcommand{\s}{\sigma}
\renewcommand{\o}{\omega}
\renewcommand{\t}{\tau}
\newcommand{\G}{\Gamma}
\renewcommand{\O}{\Omega}
\title{Degenerate conformal structures}
\subjclass[2010]{Primary: 30C62. Secondary: 37F10.}
\keywords{Degenerate conformal structures, Beltrami form, renormalization.}
\author[R. P\'{e}rez Marco]{Ricardo P\'{e}rez Marco}
\address{CNRS, LAGA UMR 7539, Universit\'e Paris XIII,
99, Avenue J.-B. Cl\'ement, 93430-Villetaneuse, France}
\email{ricardo.perez.marco@gmail.com}
\begin{document}

\begin{abstract}
We present new rectification theorems
of degenerate quasi-conformal structures that give a meaning to 
quotients of Riemann surfaces with empty interior 
``fundamental domains''. These techniques are used to define the unique renormalization of polynomials with Cantor set Julia sets.
\end{abstract}

\maketitle

\noindent \emph{The spirit of Riemann will move future 
generations as it has moved us (L.V. Ahlfors)}


\tableofcontents


\section{Introduction}\label{intro}

A quotient of a topological space by an arbitrary equivalent relation
has a natural topological space structure. 
The quotient of a Hausdorff topological 
space by a Hausdorff equivalence relation is a Hausdorff
topological space. Manifolds are defined as Hausdorff
topological spaces endowed with a {\it local} smooth 
structure determined by an atlas.
Therefore, classically, in order to quotient manifolds we need an equivalence relation 
with some non-local structure: {\it A fundamental domain}.


Even when the quotient is a topological manifold, without a fundamental domain, it
seems impossible from the classical point of view 
to recover a natural smooth 
structure inherited from the original one. 
The main purpose of this article is to overcome this difficulty in natural specific situations \textit{for 
a complex structure}. We show that some 
natural Hausdorff quotients of the Riemann sphere 
without fundamental domain led to a topological two dimensional sphere with a 
canonical complex structure. Some of these quotients are dynamically motivated and natural, but not all of them. 
In these quotients of the Riemann  
sphere, an open dense set of 
total measure can be collapsed into a set of zero measure. More 
precisely, the topological  
quotient is determined by an infinite number of holes
in the Riemann sphere. These holes have piecewise analytic 
boundaries and they can be dense in the sphere.
The quotient is then obtained by pasting analytically the
boundaries of the holes in this "butchered  Riemann sphere". 
Similar quotients exist in  
general Riemann surfaces, but we restrict to consider the Riemann sphere in this article that contains all the analytic complexity.

\medskip

\begin{figure}[h] 
\centering
\resizebox{8cm}{!}{\includegraphics{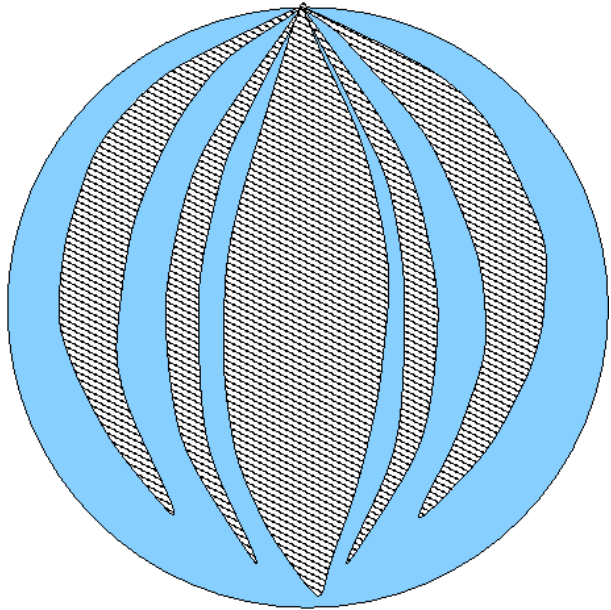}}    
\caption{The butchered Riemann sphere.}
\end{figure}

\medskip
One of the main heuristic ideas behind these techniques
is the belief that complex structures 
can be though of as a more basic structure that  the 
topological one. This seems paradoxical. A classical complex structure
determines an underlying topological structure. The heuristics 
is that the classical definition is only a 
restricted definition of complex structures to topological manifolds.
The general unrestricted definition has still to be found. Many results in 
dynamics and geometry hint that such a general definition may exist. This philosophy 
is very useful in Holomorphic Dynamics, in particular in Hedgehog Theory (hedgehogs
are topologically complex invariant compacts associated to indifferent 
periodic orbits discovered by the author \cite{[PM1]} whose dynamics exhibits remarkable rigidity properties that are only to be expected when one thinks 
to the hedgehog as possesing an intrinsic complex structure). 
The general definition should be well behaved, i.e. immersions
will induce complex structures. Also should enjoy compactness 
properties as those of Hausdorff-Gromov length spaces.
This circle of ideas can be traced back to Riemann where the 
fundamental entity of complex structures permeates everywhere 
in the "Inaugural Dissertation".

\bigskip

This philosophy is a valuable guide in our problem of quotienting
the butchered Riemann sphere. We have a compact set in the 
sphere $K$ (the complement of the open holes), that plays the 
role of ``fundamental domain'' (but may have empty interior...)
The complex structure on the Riemann sphere should induce an 
intrinsic complex structure on $K$ that will induce a complex
structure on the quotiented sphere. So one might expect to have
a natural complex structure in the quotient. Thus also a natural
absolutely continuous class in the quotiented topological sphere.
Note that the absolutely continuous class will be determined by 
the conformal structure (and not the other way around as classically!). 
There seems to be a priori no natural way
to recover directly the absolutely continuous class without going through the complex structure.

\bigskip

Because we don't have a general definition of intrinsic complex
structures, we will construct the complex structure in 
the quotient in an indirect way. 
In dimension $2$ complex structures can be identified with  
conformal structures. One of the main ideas consists in getting 
the wild topological quotient as a limit of quasi-conformal 
deformations which degenerate into the quotient. Necessarily the 
quasi-conformal distorsion blows out since at the limit we get
topological collapsing. These quasi-conformal deformations are 
defined by Beltrami coefficients that can degenerate everywhere 
(i.e. 
$||\mu_n ||_{L^\infty }\to 1$ locally everywhere.) 
Classical quasi-conformal theory 
is unable to provide the existence of a limit map of these rectification mappings 
for this wild type of collapsing.
The second fundamental ingredient consists in combining classical 
quasi-conformal theory with classical potential theory. Roughly speaking
the quasi-conformal deformations will be compatible with the potential 
of a compact set in the Riemann sphere (the holes in the 
butchered Riemann 
sphere are Green lines for this potential). Compactness coming from 
classical potential theory shows the existence of a unique 
topological collapsing that is the limit of the quasi-conformal 
deformations.
The mixture of classical quasi-conformal theory and classical 
potential theory in order to obtain unique limits for degenerating
quasi-conformal mappings is the new ingredient that provides the compactness
that is lacking in the classical theory.

D. Sullivan already
pointed out the analogy between Beltrami coefficients parametrizing
quasi-conformal mappings and $L^\infty$ densities parametrizing 
Lipschitz mappings (see \cite{[Su]} p.468)
Pushing this analogy one might expect that degenerate 
Beltrami coefficients will parametrize non-quasiconformal 
homeomorphisms in the same way that 
non-atomic measures parametrize 
general homeorphisms.
The notion of virtual conformal structures (see section  \ref{conf_struct})
explains very accurately this analogy.

\medskip

\textbf {New rectifications theorems.}

\medskip

The first results in section \ref{rectification} consist on new rectification 
theorems for degenerate conformal structures. In this article a
``conformal structure'' in a domain $\O $ of the Riemann sphere is defined 
in a generalized form: It consists of  
a field of conformal classes in tangent spaces
at almost all points in $\O $. No assumption of quasi-conformality
with respect to the standard quasi-conformal structure is made.
Conformal structures can be transported by absolutely continuous
almost everywhere differentiable homeomorphisms, or even continuous maps. This is a more 
natural category than the one of quasi-conformal homeomorphisms. The natural
objection we can think of is the lack of rectification theorems.
We prove different sorts of rectification theorems for degenerate quasi-conformal
conformal structures (we label them ``degenerate'' to stress that they are not quasi-conformal).  
First we start considering conformal structures
that are quasi-conformal on compact subsets of the complement 
of a given compact, then potential conformal structures and finally 
virtual conformal structures. The definitions of this notions and
necessary background is contained in section \ref{conf_struct}.

Without more notations we can already state a first rectification 
theorem. Consider a compact set $K$ in the Riemann sphere 
${\overline {\CC}}$ and a conformal structure $\xi$ in ${\overline 
{\CC}} -K$ which is locally quasi-conformal in ${\overline {\CC}}-K$.
Let $\mu$ be the Beltrami coefficient of $\xi$ ($\mu =0$ on $K$).

\begin{theorem}{If the Riemann surface $({\overline {\CC}}-K , \xi )$ is 
in the class $O_{AD}$ then, up to a Mo\" ebius composition to 
the left,
there exists a unique continuous
map $h : {\overline {\CC}} \to {\overline {\CC}}$ such 
that $h$ is locally quasi-conformal on ${\overline {\CC}}-K$,
and on ${\overline {\CC}}-K$,
$$
\bar \partial h =\mu \ \partial h .
$$
}
\end{theorem}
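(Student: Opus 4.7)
The plan is to realize $h$ as a limit of classical quasi-conformal rectifications of truncated Beltrami coefficients, and then to use the $O_{AD}$ hypothesis, via potential theory, to show that the limit is continuous and essentially unique. In effect, $O_{AD}$ will play exactly the role of the ``compactness coming from classical potential theory'' advertised in the introduction.

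\textbf{Approximation.} For each $n\geq 1$, truncate the coefficient by
$$
\mu_n(z)=\mu(z)\,\mathbf{1}_{\{|\mu(z)|\le 1-1/n\}},
$$
extended by $0$ on $K$. Then $\|\mu_n\|_\infty\le 1-1/n$, so by the classical Measurable Riemann Mapping Theorem there is a unique quasi-conformal homeomorphism $h_n\colon\overline{\CC}\to\overline{\CC}$ with $\bar\partial h_n=\mu_n\,\partial h_n$, normalized to fix three prescribed points. On any compact subset of $\overline{\CC}\setminus K$ the dilatations of $h_n$ are eventually uniformly bounded (since $\xi$ is locally quasi-conformal on $\overline{\CC}\setminus K$), so standard Weyl-type convergence will give local quasi-conformality of any subsequential limit there, together with the Beltrami equation $\bar\partial h=\mu\,\partial h$.

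\textbf{Limit and continuity via $O_{AD}$.} The key step is to extract a uniformly convergent subsequence $h_n\to h$. The image domain $\Omega_n:=h_n(\overline{\CC}\setminus K)$ carries a standard conformal structure isomorphic, via $h_n$, to $(\overline{\CC}\setminus K,\mu_n)$, and the latter approximates $(\overline{\CC}\setminus K,\xi)$ as $\mu_n\to\mu$. Classical results of Ahlfors--Beurling and Sario--Nakai identify plane domains whose Riemann-surface structure lies in $O_{AD}$ with complements of zero area that are NED (negligible for extremal distance). This is the missing compactness input: it prevents $h_n(K)$ from accumulating area and forces the $h_n$ to be equicontinuous on $\overline{\CC}$ through potential-theoretic estimates (moduli of separating rings, capacity comparisons). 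Combined with the normalization, this yields a uniformly convergent subsequence; the limit $h$ is then a continuous map of $\overline{\CC}$ with $h(K)$ of zero area and NED, and the previous paragraph shows it satisfies the required Beltrami equation on $\overline{\CC}\setminus K$.

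\textbf{Uniqueness and main obstacle.} Given two solutions $h_1,h_2$, set $g=h_2\circ h_1^{-1}$ on the domain $h_1(\overline{\CC}\setminus K)$. Then $g$ is a conformal isomorphism between two plane domains, each conformally equivalent to $(\overline{\CC}\setminus K,\xi)\in O_{AD}$. Because their complements are NED, and any conformal map between complements of NED compacts extends to a Möbius transformation of $\overline{\CC}$, we conclude $h_2=M\circ h_1$ for a unique $M\in\mathrm{PSL}_2(\CC)$; this shows that $h$ is independent of the choice of approximation and pins down the Möbius ambiguity. The principal obstacle is the compactness step: since $\|\mu_n\|_\infty\to 1$, the classical quasi-conformal normal-family theorems do not apply, and genuinely potential-theoretic estimates (using the $O_{AD}$ structure as a source of uniform control on capacities and moduli) must be substituted for them in order to rule out oscillation of $h_n$ near $K$.
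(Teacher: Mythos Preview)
Your uniqueness argument is essentially the paper's: if $h_1,h_2$ are two solutions then $h_2\circ h_1^{-1}$ is a conformal isomorphism off an $O_{AD}$ complement, hence M\"obius by the rigid-embedding characterization of $O_{AD}$ planar domains.

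For existence, however, you take a harder road and leave the decisive step unjustified. The paper's proof is direct and requires no approximation: local rectifications of $\xi$ furnish charts making $\overline{\CC}-K$ into a Riemann surface, which is planar because planarity is purely topological. Any planar Riemann surface embeds holomorphically in $\overline{\CC}$, and this embedding already gives a homeomorphism $h:\overline{\CC}-K\to\overline{\CC}-K_\infty$ solving the Beltrami equation. The $O_{AD}$ hypothesis is used exactly once, and only now: it forces $K_\infty$ to be totally disconnected and of absolute area zero, and that alone makes $h$ extend continuously to $\overline{\CC}$, each component of $K$ collapsing to the point of $K_\infty$ representing the corresponding end.

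The paper does also give an approximation argument (with the spatial truncation $\mu_n=\mu\cdot\mathbf{1}_{\CC-V_{\varepsilon_n}(K\cup\{\infty\})}$ rather than your level-set truncation), but it is organized so as to avoid precisely the obstacle you flag. It never attempts equicontinuity of $h_n$ on all of $\overline{\CC}$. Uniform quasi-conformality on compact subsets of $\overline{\CC}-K$ yields local convergence there; the uniqueness already in hand identifies the limit $h$ independently of the subsequence; and uniform convergence on the whole sphere is recovered \emph{a posteriori} by an exhaustion trick: pick $U_i\Subset\overline{\CC}-K$ so that each complementary component of $h(\overline{U_i})$ has diameter $<\varepsilon/2$, and then $\|h-h_n\|_{C^0(\overline{\CC})}<\varepsilon$ for large $n$.

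Your ``potential-theoretic estimates'' step is therefore both unnecessary and, as written, a genuine gap. The $O_{AD}$ hypothesis is about the limiting structure $\xi$, not about the approximants $\mu_n$, and you give no mechanism converting it into uniform modulus or capacity bounds for the maps $h_n$ near $K$; in particular there is no a priori reason the sets $h_n(K)$ should stay small before one knows anything about the limit. The paper sidesteps this entirely by reversing the order of the argument: construct $h$ first (via the planar embedding), then deduce convergence of the $h_n$ from it.
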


Note that the rectification $h$ in this theorem is not even a 
homeomorphism in general.

The Riemann surface $({\overline {\CC}}-K , \xi )$ is the 
Riemann surface structure induced by the locally quasi-conformal 
structure $\xi$ (hence $\xi$ defines locally a complex structure 
by classical quasi-conformal theory). A Riemann surface is in the class $O_{AD}$ if
there no non-constant holomorphic functions with finite 
Dirichlet integrals. There is a geometric criterium due to 
Sario that can be used to recognize when a Riemann surface is 
in the class $O_{AD}$. This is exploited to give effective 
versions of this rectification theorems and to  prove the 
rectifications theorems for potential conformal structures in
section \ref{rectif_pot} and \ref{rectif_virt}.

The rectification theorem for virtual conformal structures is 
given in section \ref{rectif_virt}. The notion of solution to the 
Beltrami equation has to be reinterpreted. We obtain 
"rectifications" that are totally degenerate and realize the 
collapsing of the butchered Riemann sphere into a new
Riemann sphere.

In section \ref{continua} we introduce generalized rectifications of a 
different kind. We use the potential theory outside a 
connected compact set and make use of Rickman's removability 
theorem. These rectifications are useful
in the new theory of renormalization of polynomials with 
connected Julia sets.

The motivation for this article comes from problems in Holomorphic
Dynamics and Renormalization Theory. Here we present some of the analytic 
tools that are necessary to a new approach to the 
renormalization of polynomials.  The main interest of this new renormalization 
is that it is canonical and, contrary to the classical Douady-Hubbard approach, natural 
in the class of polynomials since it 
remembers the polynomial structure of the dynamics by using the combinatorical structure 
of external rays (and not 
only the polynomial-like structure). In particular, it allows to define uniquely the renormalization 
of polynomial combinatorically renormalizable and with Cantor set Julia set for the quadratic polynomials (and for higher degree
if one considers removable Julia sets) (see \cite{[PM2]}).

\bigskip

{\bf Acknowledgements.}

I am grateful to John Garnett who read the first versions of
this paper, pointed out mistakes, and spend much time
listening to the new techniques developed here.

I want also to thank Peter Haissinski for his careful reading of the first version of this article.

This manuscript was written about 14 years ago around year 2000.

\section {Preliminaries.} \label{prelim}

\subsection {Potential theory.} \label{potential}

For basic background on potential theory we refer
to \cite{[Ra]} and \cite{[Tsu]}. We give a more geometric point 
of view than in the classical expositions, that are
more ``function theoretic'' oriented.

We consider a 
compact and full set $K\subset {\overline {\CC}}$ 
containing at least two points and 
$\infty \notin K$. We denote $\O ={\overline {\CC}}-K$
the complementary domain.
 We assume that $K$ has positive  
capacity and, more precisely, that all points of $K$ are regular 
points for the Dirichlet problem. This is the case 
for a uniformly perfect set, in particular for Julia 
sets of rational functions (\cite{[Ca-Ga]} p.64).

Let $z\mapsto g(z, \infty)=G(z)$ be the 
Green function in $\O$ (the potential of $K$.)  
The potential $G(z)$ converges to $0$ 
when $z\to K$ and $G_K(z)=\log |z| +u(z)$ where
$u$ is a harmonic function, and 
$$
\lim_{z\to \infty} G_K(z)-\log |z| = \g,
$$
where $\g$ is Robin's constant of $\O$. The
capacity of $K$ is $e^{-\g}= \Capa (K)$.
These properties and the fact that $G: \CC-K \to \RR_+^*$
is harmonic and positive determine $G$ uniquely. 
We denote $G=G_K$.

\begin{definition}
An equipotential of $K$ is a  level sets of $G_K$.
\end{definition}

The following Proposition is elementary:

\begin{proposition}
An equipotential is a finite union of analytic 
Jordan curves if and only if it does not contain 
critical points of $G_K$. Each component of an equipotential
is endowed with a natural parametrization (or measure) 
generated by integration of the conjugate function $G_K^*$
(the harmonic parametrization or measure.)
\end{proposition}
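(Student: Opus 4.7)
The plan is to treat the two statements in the proposition separately, using the fact that $G_K$ is real analytic on $\Omega$ together with a basic compactness argument, and then to produce the harmonic parametrization via the (locally defined) conjugate function $G_K^*$.

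First I would observe that the equipotential $E_c=\{z\in\Omega: G_K(z)=c\}$ with $c>0$ is a compact subset of $\Omega$: it is closed in $\overline{\CC}$, bounded away from $K$ (where $G_K\to 0$) and from $\infty$ (where $G_K\to +\infty$). Since $G_K$ is harmonic, hence real analytic on $\Omega$, its set of critical points $\{z\in\Omega:\partial G_K(z)=0\}$ is a (possibly empty) discrete closed subset of $\Omega$; combined with the compactness of $E_c$, the critical set inside $E_c$ is finite.

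Assume first that $E_c$ contains no critical point. Then at every $z_0\in E_c$ we have $\nabla G_K(z_0)\ne 0$, so the implicit function theorem (applied to the real analytic function $G_K-c$) realizes a neighborhood of $z_0$ in $E_c$ as the graph of an analytic function; in particular $E_c$ is a one-dimensional real analytic submanifold of $\Omega$ without boundary. A compact connected $1$-manifold is diffeomorphic to a circle, so each component of $E_c$ is an analytic Jordan curve, and compactness of $E_c$ forces the number of components to be finite. Conversely, if $z_0\in E_c$ is a critical point of $G_K$, pick a local holomorphic branch $f$ with $\mathrm{Re}\, f=G_K-c$ in a simply connected neighborhood of $z_0$; since $f'(z_0)=\partial G_K(z_0)=0$, $f$ has a zero of order $n\ge 2$ at $z_0$, and after a local analytic change of coordinate $w=(f)^{1/n}$ one has $G_K-c=\mathrm{Re}(w^n)$. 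The zero set of $\mathrm{Re}(w^n)$ is the union of $n\ge 2$ analytic arcs crossing transversally at $w=0$, which is not a topological $1$-manifold. Hence $E_c$ is not a union of analytic Jordan curves, proving the ``only if'' direction.

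For the parametrization statement, fix a component $\g$ of $E_c$ containing no critical point and work in a tubular neighborhood of $\g$, which is a (not necessarily simply connected) annular neighborhood $U\subset\Omega$ on which $G_K$ is harmonic without critical points. The locally defined harmonic conjugate $G_K^*$ satisfies $dG_K^*=-{*}dG_K$, and its differential $dG_K^*$ is a well-defined (single-valued) closed $1$-form on $U$. Along $\g$, the form $dG_K$ vanishes tangentially (because $G_K\equiv c$ on $\g$), so $dG_K^*$ restricts to a non-vanishing $1$-form on $\g$: indeed, in an arc-length parametrization $s\mapsto\g(s)$ the tangential part of $dG_K^*$ equals $|\nabla G_K|\,ds$, which is strictly positive by the absence of critical points. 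Integration of $dG_K^*$ along $\g$ therefore yields a diffeomorphism of $\g$ onto $\RR/T\ZZ$, where the period $T=\int_\g dG_K^*=\int_\g \!{*}dG_K$ is, up to sign, the harmonic-measure-type flux of $G_K$ across $\g$ (positive by the maximum principle together with the orientation induced by $\Omega$). This canonical parameter (equivalently, the image measure of Lebesgue measure on $\RR/T\ZZ$) is the desired harmonic parametrization and measure on $\g$, and it clearly depends only on $G_K$, hence only on $K$.

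The only real subtlety, and the step I expect to require the most care, is the local analysis at a critical point: one must be sure that the canonical form $G_K-c=\mathrm{Re}(w^n)$ is available despite $\Omega$ being multiply connected. This is handled by passing to a simply connected neighborhood of $z_0$ where a holomorphic primitive of $2\,\partial G_K\,dz$ exists; once this local normal form is in place, both the topological obstruction at critical points and the nonvanishing of the tangential differential $dG_K^*$ on regular level sets follow immediately.
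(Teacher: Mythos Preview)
The paper does not actually prove this proposition: it is introduced with the sentence ``The following Proposition is elementary'' and no argument is given. Your write-up is a correct and complete version of the elementary argument the author has in mind --- compactness of the level set $E_c$, the implicit function theorem at regular points, the local normal form $G_K-c=\mathrm{Re}(w^n)$ at a critical point, and the parametrization of each regular component by integrating the single-valued closed $1$-form $dG_K^*=-{*}dG_K$. One cosmetic point: your phrase ``$n\ge 2$ analytic arcs crossing transversally at $w=0$'' is slightly loose (there are $2n$ rays at equal angles, not a transversal pair), but the conclusion that the germ at $w=0$ is not a topological $1$-manifold, hence the component through $z_0$ cannot be a Jordan curve, is exactly right.
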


\begin{definition} An external ray or a Green line
is an orthogonal trajectory to equipotentials. 
\end{definition}

An we have another elementary proposition:

\begin{proposition}
External rays are 
analytic arcs if and only if they do not contain 
critical points of $G_K$. Otherwise they split at critical points into several
analytic arcs and they are called \textit{critical external rays}.

External rays are endowed with a natural parametrization 
determined by the potential. The first critical point
in a critical external ray is the critical point with 
higher potential.
The sub-critical part of 
a critical external ray is the closed subset of 
points with potential below the first critical point.
\end{proposition}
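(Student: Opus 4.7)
The plan is to reduce the analysis of external rays to the local behavior of the holomorphic function $F = G_K + i G_K^*$, where $G_K^*$ is a local harmonic conjugate of $G_K$. Since $\O$ need not be simply connected, $G_K^*$ is only defined on each simply connected neighborhood $U \subset \O$; the statements are local in character so this suffices, and the local descriptions glue consistently along equipotentials by analytic continuation.

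Away from critical points of $G_K$, we have $F'(z_0) = 2\bd_z G_K(z_0) \neq 0$, so $F$ is a local biholomorphism from $U$ onto a neighborhood of $F(z_0)$. By Cauchy--Riemann, $\nabla G_K^*$ is the $\pi/2$-rotation of $\nabla G_K$, so the level sets of $G_K^*$ are the orthogonal trajectories of the equipotentials. Consequently each external ray in $U$ is the preimage under $F$ of a vertical line $\{\Im = c\}$, hence a real analytic arc. Moreover $\frac{d}{dt}G_K = |\nabla G_K|^2 > 0$ along the gradient flow, so $G_K$ is strictly monotonic on the ray and provides the announced analytic parametrization.

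At a critical point $z_0$ of $G_K$ one has $F'(z_0) = 0$; let $n \geq 2$ be the order of the zero of $F - F(z_0)$ at $z_0$. The standard local normal form for non-constant holomorphic maps produces an analytic chart $w$ with $w(z_0) = 0$ and $F(z) - F(z_0) = w^n$. In these coordinates, the level set of $G_K^* = \Im F$ at its critical value is $\{r^n \sin(n\theta) = 0\}$, a union of $2n$ real analytic rays emanating from $z_0$ at equal angles $\pi/n$. Of these, $n$ are directions of increasing $G_K = \Re F$ and $n$ of decreasing $G_K$; each is analytic in $w$ and hence in $z$ away from $z_0$. This establishes the dichotomy: an external ray is an analytic arc iff it avoids critical points, and otherwise splits at each critical point into finitely many analytic arcs.

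For the global claims, $F'$ is a non-constant holomorphic function on each simply connected piece of $\O$ (non-constant because $G_K$ is), so its zero set, the critical locus of $G_K$, is discrete in $\O$. Along a given external ray, $G_K$ is strictly monotonic between consecutive critical points, so the finitely many critical points lying on any compact subarc of the ray are totally ordered by their potential values. The ``first'' critical point is defined to be the one of maximal potential, and the sub-critical part of the ray is the closed subset $\{z \in \text{ray} : G_K(z) \leq G_K(z_{\mathrm{first}})\}$, which is well-defined and closed in the ray. The main technical ingredient is the local normal form at critical points; the rest is a routine assembly of the local models together with monotonicity of $G_K$ along the gradient flow.
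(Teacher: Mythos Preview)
Your proof is correct. The paper does not actually supply a proof of this proposition; it is introduced with the phrase ``And we have another elementary proposition'' and stated without argument. Your approach via the local holomorphic function $F = G_K + iG_K^*$ and its normal form $F - F(z_0) = w^n$ at critical points is the standard way to justify this elementary fact, and your remarks on monotonicity of $G_K$ along the gradient flow and discreteness of the critical set are exactly what is needed to make the parametrization and the notion of ``first critical point'' well defined.

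One small slip worth correcting: you write ``vertical line $\{\Im = c\}$'', but $\{\Im = c\}$ is a \emph{horizontal} line in the $F$-plane. External rays correspond to constant $G_K^* = \Im F$, hence to horizontal lines; equipotentials correspond to vertical lines $\{\Re F = c\}$. This does not affect the substance of your argument.
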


\begin{definition} {The skeleton $\G (K)$ is the union
of sub-critical parts of critical external rays.}
\end{definition}

The skeleton of a connected set $K$ is empty.
The skeleton of $K$ has always $0$ Lebesgue measure since it is the union of at most a countable 
number of segments, . 
The union $K\cup \G (K)$ is a full connected set.

Let $G_K^*$ be a conjugate function of $G_K$ in a 
neighborhood of $\infty$. The conjugate function 
is determined up to an additive constant. We choose
the additive constant so that 
$\varphi_K (z) =e^{G_K(z)+iG_K^* (z)}$
defines a univalent map angularly tangent to the identity 
in a neighborhood of $\infty$ (this means that the derivative
of $\varphi_K$ at the fixed point $\infty$ is real and positive,
equal to $1/\Capa (K)$.) 
Note that a preimage 
of a circle of large radius centered at $0$ is an 
{\it equipotential }
of $K$, i.e. a constant locus for $G_K$. 
The holomorphic germ $\varphi_K$ defines
by analytic continuation a multivalued function. The 
monodromy around the critical points of $G_K$ introduces
an indeterminacy in the argument. The analytic continuation
of $\varphi_K$ is well defined outside the skeleton.
At points of the skeleton we obtain different limits 
depending where we approach the point from one side or 
the other. The image skeleton 
$\G' (K)\subset {\overline {\CC}}-{\overline {\DD}}$
is the set of all these limits. The image skeleton
is composed by at most a countable number of radial 
semi-open segments having the open end-point on the unit 
circle.

\begin{definition}{The Green map of $K$ is 
the holomorphic diffeomorphism 
$$
\varphi_K : {\overline {\CC}}-(K\cup \G (K)) \to 
{\overline {\CC}}-({\overline {\DD}}\cup \G' (K)) .
$$
}
\end{definition}

When $K$ is connected, the map $\varphi_K$  
is a conformal representation.
Observe that the
function $\varphi_K$ is onto because 
$G_K (z) \to 0$ when $z\to K$ and $|\varphi_K |=e^{G_K}$.
Observe that we can transport by 
$\varphi_K$ conformal
structures on ${\overline {\CC}}-{\overline {\DD}}$ defined almost everywhere.

When $K$ is a Cantor set, 
$K\cup \G (K)$ is connected and locally connected
(in the region of potential larger than $\eps >0$,
$\G (K)$ consists of a finite number of analytic arcs).
Thus when $K$ is a Cantor set, the 
univalent map $\varphi_K$ has a radial extension 
at all points, i.e all non-critical 
external rays do land at some point.

Sometimes it is more natural to work in $\HH /\ZZ$ instead
of $\CC-{\overline {\DD}}$. We denote 
$E: \HH /\ZZ \to \CC -{\overline {\DD}}$ defined by 
$$
E(z) =e^{-2\pi i z}.
$$
The image $E^{-1} (\G' (K))$ is at most a countable 
union of vertical segments in $\HH /\ZZ$. These 
vertical segments
can be grouped in groups with the same length so 
that their tip corresponds
to the same critical point of the Green function.
For each group we can cut these vertical segments and we 
repaste the boundaries respecting the imaginary coordinate.
We do the pasting according to the identifications
of the corresponding parts of the skeleton $\G (K)$ so 
that the Riemann surface $\cS_K$ obtained in that way is
biholomorphic to $\CC-K$. Observe that the complex 
structure in that Riemann surface is generated by a 
flat metric inherited from the flat metric of $\HH/\ZZ$
because we did a geodesic gluing. The surface 
$\cS_K$ endowed with this metric structures is the 
{\it cylindrical model} of $\CC -K$.

The capacity (and the Lebesgue measure as well) 
is an upper-semi-continuous function in the 
space of compact subsets of the plane endowed with 
the Hausdorff topology (see \cite{[Tsu]} p.57).

\begin{proposition}{Let $(K_n)_{n\geq 0}$ be a 
sequence of compact subsets in $\CC$ converging 
in Hausdorff topology to a compact set $K\subset \CC$.
We have
$$
\limsup_{n \to +\infty } \Capa (K_n) \leq \Capa (K).
$$
}
\end{proposition}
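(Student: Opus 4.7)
The plan is to reduce the statement to the lower semi-continuity of the logarithmic energy functional, using the variational characterization of capacity. Recall that $\Capa(K)=e^{-\gamma_K}$ where
$$
\gamma_K = \inf_\mu I(\mu), \qquad I(\mu)=\iint \log\frac{1}{|z-w|}\, d\mu(z)\, d\mu(w),
$$
the infimum running over Borel probability measures supported on $K$. In these terms the claim becomes
$$
\gamma_K \leq \liminf_{n\to+\infty} \gamma_{K_n}.
$$

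After extracting a subsequence along which $\gamma_{K_n}$ converges to $\liminf_n\gamma_{K_n}$, let $\mu_n$ denote the equilibrium measure of $K_n$, so that $\supp(\mu_n)\subset K_n$ and $I(\mu_n)=\gamma_{K_n}$. Hausdorff convergence $K_n\to K$ implies that $K\cup\bigcup_n K_n$ sits inside a fixed compact set of $\CC$; by Prokhorov's theorem I may then pass to a further subsequence so that $\mu_n\to\mu$ weakly for some Borel probability measure $\mu$. Since $\supp(\mu_n)\subset K_n$ and $K_n\to K$ in Hausdorff topology, any weak limit must satisfy $\supp(\mu)\subset K$. The inequality $\gamma_K\leq I(\mu)$ is then immediate from the definition of $\gamma_K$, so everything reduces to showing
$$
I(\mu) \leq \liminf_{n\to+\infty} I(\mu_n).
$$

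To obtain this lower semi-continuity, I would exploit that the kernel $k(z,w)=\log\tfrac{1}{|z-w|}$ is lower semi-continuous on $\CC^2$ (taking the value $+\infty$ on the diagonal) and bounded below on any bounded region. Writing $k_N:=\min(k,N)$, each $k_N$ is continuous and bounded. The product measures satisfy $\mu_n\otimes\mu_n \to \mu\otimes\mu$ weakly (weak convergence passes to products of test functions and then extends by density to $C_b(\CC^2)$), hence
$$
\iint k_N \, d\mu\otimes d\mu \;=\; \lim_{n\to\infty}\iint k_N\, d\mu_n\otimes d\mu_n \;\leq\; \liminf_{n\to\infty} I(\mu_n).
$$
Letting $N\to+\infty$ and invoking monotone convergence on the left yields $I(\mu)\leq\liminf_n I(\mu_n)$, finishing the proof.

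The main obstacle is precisely this last step: the singularity of the kernel on the diagonal must be handled with care, since the equilibrium measures $\mu_n$ could in principle accumulate mass near the diagonal as $n\to\infty$. The truncation device $k=\sup_N k_N$ together with monotone convergence bypasses this difficulty and is the standard route in this corner of potential theory (essentially the content of the cited reference \cite{[Tsu]}). An alternative, which I would mention only in passing, is to sandwich $K_n\subset K^{(\varepsilon)}$ (the closed $\varepsilon$-neighbourhood of $K$) for $n$ large, apply monotonicity of capacity, and then invoke outer continuity $\Capa(K^{(\varepsilon)})\to\Capa(K)$ as $\varepsilon\to 0$; but this outer continuity itself ultimately rests on the same lower semi-continuity of energy.
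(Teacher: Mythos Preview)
Your argument is correct and is the standard route to upper semi-continuity of logarithmic capacity: pass to equilibrium measures, extract a weak limit by Prokhorov, and use the truncation $k_N=\min(k,N)$ together with monotone convergence to obtain lower semi-continuity of the energy. The paper does not give its own proof of this proposition; it simply records the statement as a known fact with a pointer to Tsuji \cite{[Tsu]}, p.~57, and Tsuji's treatment is essentially the one you have written out. One tiny caveat you might make explicit: the equilibrium measure $\mu_n$ exists only when $\Capa(K_n)>0$, but if $\Capa(K_n)=0$ along a subsequence then $\gamma_{K_n}=+\infty$ there and the desired inequality is trivial, so one may assume $\liminf_n\gamma_{K_n}<+\infty$ from the outset.
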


Given a compact set $K\subset \CC$ with positive 
capacity and $\eps >0$ we denote by 
$$
W_\eps (K) =\{ z\in \CC ; G_K (z) \leq \eps \}
$$
the $\eps$-potential closed neighborhood of $K$.
Observe that in Hausdorff topology
$$
\lim_{\eps \to 0} W_\eps (K) =K^*,
$$
where $K^*\subset K$ is the {\it carrier} of $K$,
i.e. the subset of regular points, or
the support of the 
equilibrium measure of $K$ (see [Ra] Thm. 4.2.4 p.93,
Thm. 4.3.14 p.105.)

\begin{lemma}{Let $K_1$ and $K_2$ be two compact
sets in the plane with positive capacity, 
and $\eps >0$ such that 
$$
K_1 \subset W_\eps (K_2).
$$
Then we have on $\CC -  W_\eps (K_2)$, 
$$
G_{K_2} \leq G_{K_1} +\eps.
$$
}
\end{lemma}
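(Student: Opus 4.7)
My proof plan is to reduce the inequality to standard monotonicity of Green functions, after identifying the Green function of the sublevel set $W_\eps(K_2)$.

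First, I would observe that on $\CC - W_\eps(K_2)$ the function $G_{K_2}-\eps$ is harmonic, vanishes continuously on $\partial W_\eps(K_2)=\{G_{K_2}=\eps\}$, and has the behavior $\log|z|+(\gamma_{K_2}-\eps)$ near infinity. By the uniqueness characterization of Green functions recalled in Subsection \ref{potential}, this means
$$
G_{W_\eps(K_2)}(z)=G_{K_2}(z)-\eps \qquad \text{on } \CC - W_\eps(K_2),
$$
and in particular the Robin constant of $W_\eps(K_2)$ is $\gamma_{K_2}-\eps$, so $\Capa(W_\eps(K_2))=e^{\eps}\Capa(K_2)$.

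Next, I would use the hypothesis $K_1\subset W_\eps(K_2)$ together with the standard monotonicity of capacity to deduce $\Capa(K_1)\leq e^{\eps}\Capa(K_2)$, equivalently
$$
\gamma_{K_1}\geq \gamma_{K_2}-\eps.
$$
Then on the open set $U=\CC - W_\eps(K_2)$, consider the function
$$
u(z) = G_{K_2}(z)-G_{K_1}(z)-\eps.
$$
Since $K_1\subset W_\eps(K_2)$, both $G_{K_2}$ and $G_{K_1}$ are harmonic on $U$, and the logarithmic poles at infinity cancel, so $u$ extends harmonically to $U\cup\{\infty\}$ with value $u(\infty)=\gamma_{K_2}-\gamma_{K_1}-\eps\leq 0$ by the capacity estimate. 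On the boundary $\partial W_\eps(K_2)$, continuity of $G_{K_2}$ gives $G_{K_2}=\eps$, while $G_{K_1}\geq 0$ everywhere it is defined, hence $u=-G_{K_1}\leq 0$ on $\partial W_\eps(K_2)$. The maximum principle applied to the harmonic function $u$ on the bounded-below domain $U\cup\{\infty\}$ then yields $u\leq 0$ throughout $U$, which is precisely the desired inequality $G_{K_2}\leq G_{K_1}+\eps$ on $\CC - W_\eps(K_2)$.

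The only delicate point is the identification $G_{W_\eps(K_2)}=G_{K_2}-\eps$, which needs the regularity of $\partial W_\eps(K_2)$ for the Dirichlet problem at boundary points; this is automatic here because every point of $\partial W_\eps(K_2)$ lies on a level curve of the continuous subharmonic function $G_{K_2}$ and admits an exterior barrier (one can invoke the standing hypothesis that Green functions exist and are continuous up to the boundary for the sets in play, as recalled at the start of Subsection \ref{potential}). Once this identification is in hand, the remainder is a routine application of the maximum principle together with the monotonicity of capacity under inclusion.
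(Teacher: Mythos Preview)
Your proof is correct and rests on the same idea as the paper's: apply the maximum principle to $u=G_{K_2}-G_{K_1}-\eps$ on ${\overline{\CC}}-W_\eps(K_2)$, after observing that $u\leq 0$ on $\partial W_\eps(K_2)$. The paper's argument is just those two lines; your identification $G_{W_\eps(K_2)}=G_{K_2}-\eps$ and the separate capacity estimate giving $u(\infty)\leq 0$ are unnecessary, since once the logarithmic poles cancel and $u$ extends harmonically across $\infty$, that point is an \emph{interior} point of the domain and the maximum principle already controls it from the boundary values alone.
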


\begin{proof}{On the boundary of $W_\eps (K_2)$ we have
$$
G_{K_2} -\eps \leq 0 \leq G_{K_1},
$$
thus the maximum principle on the domain 
${\overline {\CC}}-W_\eps (K_2)$ applied to 
$G_{K_2} -G_{K_1} -\eps$ gives the result.}
\end{proof}

Consider now a sequence of compact sets $(K_n)_{n\geq 0}$
in the plane  with all their points regular, 
$$
\Capa (K_n) \geq \eps_0 >0,
$$
and converging in Hausdorff topology to 
a compact set $K$. Then we have $\Capa (K) \geq \eps_0$.
Not all points of $K$ are necessarily regular, 
in general $K^* \not= K$.
We have $\Capa (K^*) =\Capa (K)
\geq \eps_0$ (\cite{[Tsu]} p.56.)
By Harnack's principle (\cite{[Ra]} p.16) we can extract 
a converging subsequence of the sequence of positive 
harmonic functions $(G_{K_n})_{n\geq 0}$ converging 
uniformly on compact subsets of $\CC -K$ 
to a positive harmonic function $G$. It follows from 
the lemma that $G=G_K =G_{K^*}$.
We conclude:

\begin{proposition}{Let $(K_n)_{n\geq 0}$ be a 
sequence of compact sets with all points regular and
$$
\Capa (K_n) \geq \eps_0.
$$
We assume that 
$$
\lim_{n\to +\infty } K_n =K.
$$
Then uniformly on compact subsets of $\CC-K$,
$$
\lim_{n\to +\infty} G_{K_n} =G_K.
$$
Thus
$$
\lim_{n\to +\infty} \varphi_{K_n} =\varphi_{K^*},
$$
in the Caratheodory kernel of the domain of definition
of these Green mappings.
}
\end{proposition}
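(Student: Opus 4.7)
The strategy is to establish Harnack compactness of $(G_{K_n})$ on $\CC\setminus K$, to identify every subsequential limit with $G_K$ by two applications of the preceding Lemma, and then to transport convergence to the Green maps.

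First I would show that $(G_{K_n})$ is locally uniformly bounded on $\CC\setminus K$. The upper-semicontinuity proposition above forces $\Capa(K)\geq \eps_0$, and the normalization $G_{K_n}(z)=\log|z|+\gamma_{K_n}+o(1)$ at $\infty$ together with $\gamma_{K_n}\leq \log(1/\eps_0)$ yields a uniform bound on any sphere $|z|=R$ for $R$ large. Since Hausdorff convergence guarantees that $K_n$ eventually misses any fixed compact $L\subset \CC\setminus K$, the maximum principle applied on $\{|z|\leq R\}\setminus K_n$, where $G_{K_n}=0$ on $K_n$, propagates the bound to $L$. Harnack's principle then produces, from any subsequence, a further one $G_{K_{n_k}}$ converging locally uniformly on $\CC\setminus K$ to a positive harmonic function $G$.

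Next I would identify $G$ with $G_K$. Since $W_\eps(K)$ is a closed neighborhood of $K$, Hausdorff convergence gives $K_n\subset W_\eps(K)$ for $n$ large; the Lemma with $K_1=K_n$, $K_2=K$ then yields $G_K\leq G_{K_n}+\eps$ on $\CC\setminus W_\eps(K)$. Passing to the Harnack limit and letting $\eps\to 0$ gives $G_K\leq G$ on $\CC\setminus K$. For the reverse inequality, one verifies that $K^*\subset W_\eps(K_n)$ for $n$ large and applies the Lemma with $K_1=K^*$, $K_2=K_n$ to obtain $G_{K_n}\leq G_{K^*}+\eps$ on $\CC\setminus W_\eps(K_n)$; a fixed $z\in\CC\setminus K$ eventually lies in this set, so the limit followed by $\eps\to 0$ gives $G\leq G_{K^*}=G_K$. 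Uniqueness of the subsequential limit then upgrades this to full convergence of $G_{K_n}$ to $G_K$.

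For the Green maps, write $\varphi_{K_n}(z)=\exp(G_{K_n}(z)+iG_{K_n}^*(z))$, where the harmonic conjugate $G_{K_n}^*$ is fixed up to an additive constant by the requirement that $\varphi_{K_n}$ be angularly tangent to the identity at $\infty$ with derivative $1/\Capa(K_n)$. Local uniform convergence $G_{K_n}\to G_K$ transports to the conjugate, with the tangency normalization pinning down the constant; hence $\varphi_{K_n}\to\varphi_{K^*}$ locally uniformly on any compact subset of $\CC\setminus(K^*\cup\G(K^*))$ which avoids the skeletons $\G(K_n)$ for $n$ large, and this is exactly convergence in the Caratheodory kernel of the domains of definition. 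The main obstacle is the verification $K^*\subset W_\eps(K_n)$ for $n$ large, since the Hausdorff hypothesis a priori controls only Euclidean distance to $K_n$: here the uniform capacity bound $\Capa(K_n)\geq \eps_0$ is indispensable, supplying the uniform regularity of $G_{K_n}$ near $K_n$ needed to translate Euclidean proximity into smallness of the Green function. An alternative route bypasses this step by passing to a weak-$*$ limit $\mu$ of the equilibrium measures $\mu_{K_n}$ and identifying $\mu=\mu_{K^*}$ via lower semicontinuity of logarithmic energy and the extremal characterization, which yields $G=G_{K^*}$ directly from $G_{K_n}=\gamma_{K_n}-U^{\mu_{K_n}}$.
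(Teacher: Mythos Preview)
Your argument follows exactly the paper's route: Harnack compactness on $\CC\setminus K$ followed by the preceding Lemma to identify any subsequential limit with $G_K=G_{K^*}$. The paper compresses the identification to the single line ``It follows from the lemma that $G=G_K=G_{K^*}$''; you correctly unpack this into the two applications of the Lemma and flag that the inclusion $K^*\subset W_\eps(K_n)$ is the delicate step, offering the equilibrium-measure route as a clean way to secure the upper bound $G\le G_{K^*}$ --- a point the paper leaves implicit.
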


\subsection{The analytic tree.} \label{tree}

We consider in this section a disconnected compact set 
$K\subset \CC$ with positive capacity and with all 
its points regular.
The complement $\O ={\overline {\CC}} -K$ is 
a planar Riemann surface. As described in section I.1.a
there exists a Green function $G_K$ with logarithmic singularity
at $\infty$ which tends to $0$ in $K$. 
The Riemann surface $\O$ is hyperbolic, 
i.e. Green functions exist.

We study the conformal invariants of $\O$.
Let ${\cC} (K)$ be the set whose elements are 
critical equipotentials, i.e. those containing 
critical points of $G_K$. In the generic situation,
all critical equipotential contain only one critical 
point with multiplicity one (this is not the case
for the Julia set of a generic polynomial of degree
larger than $3$). We say that $K$ is 
generic if this happens. It can be  proved
that this holds for generic $K$ in the Baire sense for 
the Hausdorff metric (Sketch of the proof: The capacity function 
is upper semi-continuous, thus the space of compact 
sets of $\CC$ with capacity larger than $r>0$ is 
a closed set, thus a Baire space. For $N>0$ the set 
of those $K$ whose first $N$ critical equipotentials
contain only one critical point with multiplicity one 
is open and dense). 
It is sufficient for our purposes 
to study the  generic case. The modifications to treat 
the general case are straightforward, but the 
combinatorics is cumbersome. We will sketch the few 
modifications necessary to treat the general case and will 
consider only the generic situation.

The connected components of ${\overline {\CC}} 
-(\{ \infty \} \cup 
\cC (K)\cup K)$ is a 
family of annuli $(A_i)$. We have an order relation 
$A_2 < A_1$ if $A_2$ is contained in the bounded 
component of the complement of $A_1$. For this order 
relation, the unique annulus $A_0$ in the family such that 
$\infty \in \partial A_0$ is a maximal element.
The family $(A_i)$ with the order $<$ is a tree.

\begin{definition}\textbf {(Combinatorial tree).}{ 
The combinatorial tree associated to $K$ or $\O$ is 
$$
\cT_c (K) =\cT_c (\O ) =( (A), <).
$$
}
\end{definition}

In the generic case, this tree is always a dyadic tree,
i.e. the valence of each vertex is $2$. In the general 
case the combinatorial tree has finite valence at each 
vertex because the harmonic function 
$G_K$ has only isolated critical points
and each critical equipotential contains only a finite 
number of critical points.

The combinatorial tree is obviously a conformal invariant 
of $\O$ but it is not the only one. We label each  
vertex $A$ of $\cT_c (K)$ with the modulus  
$\mod A \in ]0, +\infty ]$ of the corresponding annulus. 
 These are the {\it modular invariants}. 

Next we have combinatorial invariants that describe
how to paste the annulus of the tree. The combinatorics
is much simpler in the generic situation : It only 
depends on two angles (and in the symmetric situation, 
that will be specially important for the applications to 
the quadratic Julia sets, 
it depends only on one angle). We treat the generic 
situation. Each 
annulus of the tree that is not the root ($A \not= A_0$) 
or an end 
contains one critical point
in its outer boundary component and one critical point in 
its inner boundary component that 
corresponds to two points of its prime-end circle.
Given such an annulus $A$ that is not 
the root ($A \not= A_0$),
we map conformally $A$ into a round annulus 
(i.e. bordered by circles) centered at the origin 
of the complex plane. We map the outer (resp. inner) boundary
component into the outer (resp. inner) boundary component. 
We denote $z_+$ the image of 
the critical point in the outer boundary component and 
$(z_-^{(1)} , z_-^{(2)} )$ the images of the two access to 
the critical point in the inner boundary component, 
with $z_-^{(1)}$ being the first one encountered when going 
around the annulus counterclockwise from $z_+$.
The pasting invariant is  the unordered pair of 
angles defined modulo $1$
(we measure angles modulo $1$)
\begin{align*}
\t (A) &= \{ \t_1 (A) , \t_2 (A) \},\\
\t_1 (A) &=\frac{1} {2 \pi } (\arg z_+ -\arg z_-^{(1)}) ,\\
\t_2 (A) &=\frac{1} {2 \pi } (\arg z_+ -\arg z_-^{(2)}) .\\
\end{align*}

For the root $A_0$ we proceed as follows. We identify 
$A_0$ to the pointed unit disk $\DD =\DD-\{ 0\}$ and
we consider the two points $(z_-^{(1)} , z_-^{(2)} )$ 
corresponding to the two access of the critical on the 
inner boundary of $A_0$. We define modulo $1$
$$
\t_1 (A_0 )=1-\t_2 (A_0 )=\frac{1}{ 2 \pi } ( \arg z_-^{(1)}
-\arg z_-^{(2)} ).
$$
The angular invariant of the root $\t (A_0)= \{\t_1 (A_0) ,
\t_2 (A_0)\}$ is only  significative modulo the elements
the diagonal, i.e. of the form $(\a , \a)$ 
(only the difference of 
the arguments of $z_-^{(1)}$ and $z_-^{(2)}$ matters).
For an end, we define $\t (A) =0$ (there is no significative
angular invariant for an end).

\begin{definition}\textbf{ (Analytic tree).} {For a generic
$K$ we define the analytic tree of $K$ or $\O$ to 
be the combinatorial tree with labeled vertices
$$
\cT_\o (K) =\cT_\o (\O ) =( (A, \mod (A) , \t (A) , <).
$$

In general we call an analytic tree such a combinatorial
object.
}
\end{definition}

\begin{remark}

The angular invariants in the non-generic situation describe
the relative position of critical points in the outer boundary 
and the position of critical accesses corresponding to 
critical points in the inner boundary. As in the generic
situation all angles are only defined up to one rotation. 
More important, we have to add the information describing 
which sets of accesses do correspond to the same critical 
point.
\end{remark}

\begin{remark}
Continuing this work, N. Emerson developped in his Thesis at UCLA the combinatorics and structure of general Analytic Trees
and their combinatorics (see \cite{[Em]}). 
\end{remark}

\medskip

\textbf {Riemann surface associated to an analytic tree.}

\medskip

From the analytic tree of $\O$ we can reconstruct 
$\O$ by gluing together round annulus (that is, 
bounded by two concentric circles) of modulus 
$\mod (A)$ in the prescribed way by the tree and the 
angular invariants. More precisely, we describe the 
gluing construction that associates to each analytic 
tree $\cT_\o $ a Riemann surface $\cS (\cT_\o )$.

We start with $A_0 \cup \{ \infty \} $ that 
we identify to the exterior of the closed unit disk
${\overline {\CC}}-{\overline {\DD}}$. 
We determine two 
points on the unit circle compatible with the angular
invariant $\t (A_0)$, i.e. two points whose difference 
of arguments modulo $1$ is equal to $\t_1 (A_0) -\t_2 (A_0)$.  
We identify these two  points of the  
boundary. Now there is a unique way  
of gluing a children of $A_0$ so that the harmonic parameterization 
of the identified boundaries do correspond.  If not an end,
the angular invariant 
of the children determines exactly the location on the 
inner boundary of two points. We identify these two 
points and we are again in the same situation for gluing 
the next two children. We go on and paste all children in the 
tree. Finally we paste the ends in such a way that the 
harmonic measures correspond on both sides of 
the pasted boundary.
We denote  $\cS (\cT_\o )$ the abstract Riemann 
surface constructed from the analytic tree $\cT_\o$ in 
that way. Obviously there is a harmonic function on 
$\cS (\cT_\o )$ that generates the analytic tree for 
which the traces of the 
pasted boundaries are equipotentials. Moreover we 
have that $ \cS (\cT_\o (\O ))$ and $\O $ are 
biholomorphic
$$
\cS (\cT_\o (\O )) \approx \O ,
$$
because a conformal representation of $A_0\cup 
\{ \infty \}$ into its
trace in $\cS (\cT_\o (\O ))$ extends into a 
holomorphic diffeomorphism from $\O$ into  
$\cS (\cT_\o (\O ))$ because the pastings are 
compatible (in a neighborhood of a 
non-singular glued point the harmonic functions 
on both sides have conjugate functions that do 
coincide on the boundary, so the harmonic functions and
its conjugates do define analytic extensions of the same
local holomorphic diffeomorphism).
Observe that the pointed Riemann surfaces with 
the point $\infty$ for $\O$ and the corresponding point to 
$\infty$ for $\cS (\cT_\o (\O ))$ are biholomorphic also.
From this construction we obtain:

\begin{theorem} { The pointed planar Riemann surfaces
$(\O_1 , \infty )$, 
$\O_1 ={\overline {\CC}}-K_1$, and  $(\O_2 , \infty )$,
 $\O_2 ={\overline {\CC}}-K_2$, are biholomorphic
(i.e. by a holomorphic diffeomorphism fixing $\infty$) 
if and only if their analytic trees coincide
$$
\cT_\o (\O_1 ) = \cT_\o (\O_2 ).
$$
}
\end{theorem}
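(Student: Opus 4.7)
The strategy is to establish the two directions of the equivalence separately, using the gluing construction of $\cS(\cT_\o)$ that was just developed. The \emph{if} direction is essentially a corollary of what has already been proved in the preceding paragraphs: if $\cT_\o(\O_1)=\cT_\o(\O_2)$, then the abstract Riemann surfaces produced by the gluing recipe are literally the same, so
$$
\O_1 \approx \cS(\cT_\o(\O_1))=\cS(\cT_\o(\O_2))\approx \O_2,
$$
and one only needs to check that the biholomorphism can be arranged to send $\infty$ to $\infty$. This is automatic from the construction, because in both cases the distinguished point of $\cS(\cT_\o)$ is the center of the exterior of the unit disk glued in as the root annulus $A_0$, and the isomorphism $\O_i \approx \cS(\cT_\o(\O_i))$ is built by extending a conformal representation of $A_0(\O_i)\cup\{\infty\}$ onto its trace in $\cS$, so $\infty$ corresponds to $\infty$.

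For the \emph{only if} direction the main point is to see that a biholomorphism $\Phi:(\O_1,\infty)\to(\O_2,\infty)$ carries the combinatorial, modular and angular data of $\O_1$ onto the corresponding data of $\O_2$. The key observation is that $G_{K_2}\circ\Phi$ is harmonic and positive on $\O_1$, vanishes at $K_1$, and has a logarithmic pole of weight $1$ at $\infty$ (because $\Phi(z)=az+b+O(1/z)$ near $\infty$ with $a\neq 0$, so $\log|\Phi(z)|=\log|z|+\log|a|+o(1)$). By uniqueness of the Green function we therefore have
$$
G_{K_2}\circ\Phi = G_{K_1}\quad\text{on } \O_1,
$$
where equality is on the nose once both are normalized by leading coefficient $1$ at $\infty$. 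Consequently the critical sets of $G_{K_1}$ and $G_{K_2}$ correspond under $\Phi$, each critical equipotential of $\O_1$ is mapped conformally to a critical equipotential of $\O_2$, and the order relation on annuli is preserved. So the combinatorial trees coincide, and the moduli of matched annuli coincide because $\Phi$ restricts to a conformal diffeomorphism on each annulus of the tree.

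To finish one must check that the angular invariants $\t(A)$ are preserved. Normalize each annulus by a conformal uniformization onto a round annulus in such a way that the outer boundary is sent to the outer boundary; this is unique up to rotation and the involution $z\mapsto c/z$. Under either symmetry the unordered pair of differences $\frac{1}{2\pi}(\arg z_+ - \arg z_-^{(i)})$ is left invariant. Since $\Phi$ sends critical points of $G_{K_1}$ to critical points of $G_{K_2}$ and preserves the harmonic conjugate up to an additive constant (again because $G_{K_2}\circ\Phi=G_{K_1}$), the pair $\t(A)$ is carried to $\t(\Phi(A))$ for every non-root, non-end annulus. The root case is handled the same way after identifying $A_0\cup\{\infty\}$ with the exterior of the unit disk, using the fact that $\Phi$ fixes $\infty$.

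The step that requires the most care is the \emph{only if} direction: making precise that $G_{K_2}\circ\Phi=G_{K_1}$ under the normalization $G_{K_i}(z)=\log|z|+\g_i+o(1)$, and then propagating this identity through the harmonic conjugate to identify angular invariants. The derivative $a=\Phi'(\infty)$ has modulus $\Capa(K_2)/\Capa(K_1)$, so capacities are \emph{not} in general invariants, whereas moduli of annuli in the tree and the angular pastings are; this is the content that the analytic tree captures exactly the right information to detect conformal type fixing $\infty$.
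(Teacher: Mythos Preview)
Your proof is correct and follows the same overall approach as the paper. The ``if'' direction is identical: both you and the paper use the chain
$\O_1 \approx \cS(\cT_\o(\O_1)) = \cS(\cT_\o(\O_2)) \approx \O_2$
coming from the gluing construction, noting that these are pointed biholomorphisms because the reconstruction starts from a conformal representation of $A_0\cup\{\infty\}$.

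For the ``only if'' direction the paper simply asserts that ``obviously by construction $\cT_\o(\O)$ only depends on the conformal type of $(\O,\infty)$,'' whereas you actually justify this by showing $G_{K_2}\circ\Phi = G_{K_1}$ via uniqueness of the Green function, and then tracking the modular and angular data through $\Phi$. This is the natural way to unpack the paper's one-line claim, and your computation of $|\Phi'(\infty)|=\Capa(K_2)/\Capa(K_1)$ is a nice consistency check. One small imprecision: once you stipulate that the uniformization of $A$ onto a round annulus sends outer boundary to outer boundary, the only remaining conformal automorphisms are rotations; the inversion $z\mapsto c/z$ swaps the two boundary circles and is therefore already excluded. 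This does not affect your conclusion, since the angular invariant $\t(A)$ is a rotation-invariant quantity anyway.
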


\begin{proof}{Obviously by construction 
$\cT_{\o} (\O)$ only depends on the conformal type 
of $(\O , \infty )$. Conversely, if the analytic trees agree
then 
$$
\O_1  \approx \cS (\cT_\o (\O_1 ))\approx 
 \cS (\cT_\o (\O_2 ))\approx \O_2  \ ,
$$
are all biholomorphic as pointed Riemann surfaces.
}
\end{proof}

We can observe that there are analytic trees that 
can not be realized as the complement of a 
compact set with positive capacity. Just consider
an analytic tree with asymptotic very large 
modulus invariants. The abstract pasted Riemann 
surface is then parabolic, i.e there are no Green 
functions, (use the criterium in \cite{[Ah-Sa]}
IV.15B p.229) and is not the complement of a compact set with 
positive capacity. Note that we could  define 
analytic trees also in general for the complement
of a compact set of zero capacity (or of compact sets 
with irregular points) using an Evans
potential (see \cite{[Sa-Na]} V.3.13A p. 351 or \cite{[Tsu]} III.6 p.75). 
Such a tree 
depends then on the choice of the Evans potential which
is not unique.

On the other hand we can observe that any tree corresponds
to an analytic tree of the complement of a compact set
(in the case of zero capacity the tree is constructed from
the choice of an Evans-Selberg potential). The abstract Riemann 
surface $\cS (\cT_\o )$ 
obtained from the pasting of abstract annulus according
to the combinatorics of the tree $\cT_\o$ is planar, i.e.
every cycle on $\cS (\cT_\o )$ is dividing (see [Ah-Sa] 
I.30D p.66). Then $\cS (\cT_\o )$ can be holomorphically 
embedded in $\CC$ (\cite{[Ah-Sa]} III.11A p.176), 
$\cS (\cT_\o ) \approx \O \subset 
{\overline {\CC}}$, with 
$\infty$ corresponding to the non pasted end 
of $A_0$.

Observe that we can read on $\cT_\o (K)$ many properties
of $K$ or $\O$. The Riemann surface $\O$ is 
finitely connected if and only if $\cT_\o (\O )$ is 
finite. An infinite branch of $\cT_\o (K)$ corresponds
to a point in the ideal boundary of $\O$ and to a 
connected component of $K$.  
If the sum of the modulus invariants are infinite 
along this branch with the root removed, 
i.e. for $(A_1, A_2, \ldots)$,
$$
\sum_{i\geq 1} \mod \ (A_i ) =+\infty ,
$$
then this component is a point.
We observe that if $\cT_\o (K)$ has no finite 
branches and this condition holds for all infinite
branches then $K$ is a Cantor set. Observe also 
that this is not a necessary condition. But if we 
have a finite branch then $K$ contains a non-trivial 
connected component, that is not a point because otherwise
this isolated point will not be regular (contradicting 
one of our assumptions).

We study in more detail the Cantor set situation.
More precisely we want to determine when the analytic
tree (or equivalently the conformal type of 
$\O$) determines $K\subset {\overline {\CC}}$ up to 
a Mo\"ebius transformation. Observe that this problem 
is hopeless when $K$ has a non-trivial connected 
component (because of Riemann uniformization theorem), 
or when $K$ has positive measure (because of 
Morrey-Ahlfors-Bers theorem), or for a "flexible" 
Cantor set as those constructed by R. Kaufman and C. Bishop
\cite{[Bi]}. 
In order to address this 
problem we review in the next section the classical 
theory of $O_{AD}$ Riemann surfaces.

We observe also that all the information of the analytic
tree can be recovered from $\varphi_K$ and the 
skeleton $\G' (K)$. In particular, the preimages of 
annulus of the analytic tree by $\varphi_K$ are 
quadrilaterals bounded by circles and radial segments.
The modulus of the rectangles are the modulus of the 
corresponding annulus. 

The cylindrical model $\cS$
is very convenient. The flat metric on the cylindrical 
model is the extremal metric for all annulus in the 
analytic tree (we have a simultaneous uniformization). 
Observe that if an annulus $A\subset \cS$ 
corresponds to  an annulus in 
the analytic tree then 
$$
\mod A =\frac{1}{ 2 \pi } \frac {g_+ (A ) -g_- (A)} {\mu_H (A)},
$$
where $g_- (A) < g_+ (A)$  are the potentials of the 
boundaries of $A$, and $\mu_H (A)$ is the harmonic measure 
of $A$, i.e. the linear measure of the angles of external 
rays intersecting $A$.

\subsection {The class of $O_{AD}$ Riemann surfaces.}\label{OAD}

 We review part of the classical classification theory 
of Riemann surfaces, and more precisely the results
concerning the $O_{AD}$ class. We remind the classical classification theory 
of Riemann surfaces. As a basic reference the 
reader can consult \cite{[Ah-Sa]}, and for a more encyclopedic 
treatment \cite{[Sa-Na]}.
 
A Riemann surface $\cS$ is in $O_{AD}$
if there are no non-constant AD-functions, i.e. 
holomorphic functions $F$ on $\cS$ with finite
Dirichlet integral
$$
D_{\cS} (F) =\int \int_{\cS} |F'|^2 \ dx dy\ .
$$

By Rado's theorem, any Riemann surface (defined to be connected)
is $\s$-compact.
Consider an exhaustion $(U_n)_{n\geq 0}$ of $\cS$
by open sets $U_n$ relatively compact in $\cS$ and 
${\overline {U_n}}\subset U_{n+1}$. Consider the 
connected components $(U_{nj})_j$  of $U_{n+1}-U_n$.
We denote by $m_{nj}$ the extremal length of paths in 
$U_{nj}$ joining $\partial U_n \cap {\overline {U_{nj}}}$
to $\partial U_{n+1} \cap {\overline {U_{nj}}}$.
We have ([Ah-Sa] IV.13D p.225) that 
$$
m_{nj} =\frac{1}{ D_{U_{nj}} (u) }\ ,
$$
where 
$$
D_{U_{nj}} (u) =\int \int_{U_{nj}} 
|\grad u |^2 dx \ dy ,
$$
where $u$ is the harmonic function in $U_{nj}$ equal 
to $0$ in $\partial U_n \cap {\overline {U_{nj}}}$ 
and equal to $1$ in 
$\partial U_{n+1} \cap {\overline {U_{nj}}}$.

Define 
$$
m_n =\inf_j m_{nj}.
$$

Then we have the following modular test due to L. Sario 
(\cite{[Ah-Sa]} IV.16D p.233):

\begin{theorem} {
If there exists an exhaustion
$(U_n)_{n\geq 0}$ such that 
$$
\sum_{n=0}^{+\infty} m_n =+\infty 
$$
then $\cS$ belongs to the class $O_{AD}$.}
\end{theorem}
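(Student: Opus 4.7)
I would argue by contradiction. Suppose $F$ is a non-constant holomorphic function on $\cS$ with $D_\cS(F)<\infty$, and derive a contradiction with $\sum_n m_n=+\infty$.

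The central analytic tool is a length-area/Fourier estimate on each annular component, combined with a chain selection that escapes to the ideal boundary. On each annular component $U_{nj}$ of $U_{n+1}\setminus \overline{U_n}$, with boundary pieces $\alpha_{nj}:=\partial U_n\cap\overline{U_{nj}}$ and $\beta_{nj}:=\partial U_{n+1}\cap\overline{U_{nj}}$, I would first apply the elementary length-area inequality: plugging the admissible metric $\rho=|F'|$ into the variational characterization $m_{nj}=\sup_\rho L(\rho)^2/A(\rho)$, and using $\int_\gamma|F'|\,|dz|\ge |F(\gamma(1))-F(\gamma(0))|$ for any rectifiable $\gamma\subset U_{nj}$ joining $\alpha_{nj}$ to $\beta_{nj}$, one obtains
$$D_{U_{nj}}(F)\ge \frac{L_{nj}^{\,2}}{m_{nj}},\qquad L_{nj}:=\inf_\gamma|F(\gamma(1))-F(\gamma(0))|.$$
To make this compatible with the hypothesis $\sum_n m_n=+\infty$ (which puts $m_n$ in the \emph{numerator}), I would then refine it by conformally normalizing $U_{nj}$ to a round annulus $\{1<|w|<e^{2\pi m_{nj}}\}$ and Laurent expanding $F(w)=\sum_k b_{k,nj}w^k$. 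A direct calculation gives
$$D_{U_{nj}}(F)=\pi\sum_{k\neq 0}|k|\,|b_{k,nj}|^{2}\bigl|e^{4\pi m_{nj}k}-1\bigr|\;\ge\; 4\pi^{2}\,m_{nj}\sum_{k\neq 0}k^{2}\,|b_{k,nj}|^{2},$$
using $|e^{4\pi mk}-1|\ge 4\pi m|k|$. The right-hand side is bounded below by $4\pi^{2} m_{nj}\,P_{nj}^{2}$, where $P_{nj}:=\bigl(\sum_{k\neq 0}k^{2}|b_{k,nj}|^{2}\bigr)^{1/2}$ is a conformally natural measure of the non-constancy of $F$ on $U_{nj}$.

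Next I would select, for each $n$, a component $j_n^{*}$ with $m_{n,j_n^{*}}=m_n$, arranged so that the sequence $(U_{n,j_n^{*}})_n$ nests combinatorially along a branch of the ideal boundary of $\cS$. Summing along this chain,
$$D_\cS(F)\ge \sum_n D_{U_{n,j_n^{*}}}(F)\ge 4\pi^{2}\sum_n m_n\,P_{n,j_n^{*}}^{2}.$$
Therefore it suffices to exhibit a chain along which $P_{n,j_n^{*}}\ge c>0$ for infinitely many $n$: combined with $\sum_n m_n=+\infty$, this forces $D_\cS(F)=+\infty$, the desired contradiction.

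\textbf{The main obstacle} is precisely producing this uniform lower bound on $P_{n,j_n^{*}}$. The idea is: if no such chain existed, then along every combinatorial ray in the exhaustion tree the normalized non-constancy $P_{n,j}$ would tend to zero; a normal-families argument on the conformally normalized annuli, together with the maximum principle on the $\overline{U_n}$ and the finiteness of $D_\cS(F)$, would then force $F$ to be asymptotically constant at every ideal boundary point, hence constant on all of $\cS$. Turning this heuristic into a clean argument—handling branchings of the combinatorial tree, passing to diagonal subsequences, and ruling out cancellations between different Laurent modes—is the heart of the proof.
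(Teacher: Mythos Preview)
First, note that the paper does not supply its own proof of this statement: it is quoted as Sario's modular test with a reference to Ahlfors--Sario, IV.16D. So there is no in-paper argument to compare against; I can only assess your proposal on its own terms and against the classical proof.

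Your proposal contains two concrete errors and one genuine gap.

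\textbf{The Laurent inequality is false for negative indices.} You assert $|e^{4\pi m_{nj}k}-1|\ge 4\pi m_{nj}|k|$ for all $k\ne 0$. For $k>0$ this is $e^x-1\ge x$, which is fine; but for $k<0$ it reads $1-e^{-x}\ge x$ with $x=4\pi m_{nj}|k|>0$, and this fails for every $x>0$ (indeed $1-e^{-x}<1$ while $x$ can be arbitrarily large). So the key lower bound $D_{U_{nj}}(F)\ge 4\pi^2 m_{nj}P_{nj}^2$ does not follow from your computation. The negative Laurent modes genuinely contribute only a bounded amount to the Dirichlet integral of a thick annulus, and no inequality of the shape you want is available for them.

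\textbf{The components $U_{nj}$ need not be annuli.} The theorem is stated for an arbitrary open Riemann surface with an arbitrary relatively compact exhaustion. A component of $U_{n+1}\setminus\overline{U_n}$ may have any finite topological type: several boundary components on each side, positive genus, etc. Your conformal normalization to a round annulus and Laurent expansion are unavailable in that generality. (In the paper's applications to analytic trees the pieces \emph{are} annuli, but the theorem as stated is not restricted to that case.)

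\textbf{Chain selection.} Asking for $m_{n,j_n^*}=m_n$ \emph{and} for the $U_{n,j_n^*}$ to nest along a single branch is generally impossible: the minimum can jump between branches. This particular issue is harmless, however, since $m_{n,j}\ge m_n$ for every $j$, so along \emph{any} branch $\sum_n m_{n,j_n}\ge\sum_n m_n=\infty$; you should simply drop the requirement $m_{n,j_n^*}=m_n$.

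\textbf{The acknowledged obstacle is real.} Even granting an annular setting and a corrected Dirichlet estimate, your outline for producing a branch with $P_{n,j_n^*}\ge c>0$ infinitely often is not a proof. The normal-families/maximum-principle sketch does not explain why $F$ being asymptotically ``Laurent-constant'' on each normalized annulus along every branch would force $F$ to be globally constant; the conformal normalizations vary with $n$ and there is no evident mechanism tying $P_{nj}\to 0$ to pointwise convergence of $F$ to a constant near the ideal boundary.

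For orientation: the classical argument in Ahlfors--Sario does not follow a single branch. It is a global length--area estimate in which one uses the \emph{conjugate} curve families (separating cycles) in the successive shells, so that the extremal length $1/m_{nj}$ appears with $m_{nj}$ in the numerator after applying the length--area inequality with $\rho=|F'|$; the infimum $m_n=\inf_j m_{nj}$ then enters because one must control the worst component at each level simultaneously, not because one chases minima along a path. That structural difference is why your single-chain strategy runs into the lower-bound obstacle you identify.
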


Observe that the strength of Sario's criterium relies
on the choice of the exhaustion. Given a Riemann surface
it is always possible to choose bad exhaustions for
which the criterium fails. The most efficient choice of
the exhaustion consists in those that give modulus $m_{nj}$
that are comparable in magnitude for fixed $n$ (the boundaries
of the exhausting domains cut "evenly" the Riemann surface).
Observe that Sario's criterium gives a sufficient condition
only. There is a distinct class of Riemann surfaces for 
which Sario's criterium holds that is strictly distinct 
from the $O_{AD}$ class. For these Riemann surfaces the $O_{AD}$
property can be checked using Sario's geometric critetrium.
Thus it is natural to denote this class by $GO_{AD}$ for 
{\it geometric} $O_{AD}$ class.
Geometric  $O_{AD}$ Riemann surfaces enter in  
the effective explicit application of the 
rectification theorems presented in this article.

\medskip

\textbf {Planar $O_{AD}$ Riemann surfaces.}

\medskip

We consider $\cS ={\overline {\CC}}-K$ where 
$K$ is a compact set in $\CC$.
The following results can be traced back to L. Ahlfors,
A. Beurling, R. Nevanlinna and L. Sario 
(\cite{[Ah-Sa]} IV.2B p.199).

\begin{theorem}\label{thm_OAD}{
The following conditions are
equivalent :

\item {(i)} The Riemann surface $\cS =
{\overline {\CC}}-K$ is in the class $O_{AD}$.

\item {(ii)} The compact set $K$ has absolute
measure zero: For any univalent map defined on the 
complement of $K$, the complement of the image has 
measure zero.

\item {(iii)} The compact set $K\subset \CC$ is 
rigidly embedded in the complex plane: The only 
univalent functions on the complement of $K$ are 
Moebius transformations.

\item {(iv)} The compact set $K\subset \CC$ is 
neglectable for extremal length: The modulus of a 
path family $\G$ is the same than the modulus of the 
path family whose ``paths'' are the ones of $\G$ removing 
their intersection with $K$.

}
\end{theorem}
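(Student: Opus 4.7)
The plan is to establish the four-way equivalence by combining two classical pieces: Sario's extremal-length criterion, which furnishes $(i) \Leftrightarrow (iv)$, and the Ahlfors--Beurling characterization of null sets for extremal distance, which furnishes $(ii) \Leftrightarrow (iii) \Leftrightarrow (iv)$. The unifying analytic tool is the generalized area theorem: for any univalent $f$ on $\cS = \overline{\CC} - K$ normalized at $\infty$ by $f(z) = z + a_0 + a_1/z + \cdots$, the function $g := f - z - a_0$ is holomorphic with $g(\infty) = 0$ and
$$
D_\cS(g) = \iint_\cS |f'(z) - 1|^2 \, dx\, dy < +\infty,
$$
with strict positivity if and only if $f$ is not a Möbius transformation. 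The finiteness follows from a Gronwall--Stokes computation on $\cS \cap \{|z| < R\}$ as $R \to \infty$, exploiting the $O(1/z)$ decay of $g$ at $\infty$ to kill the outer boundary contribution; the positivity statement is the classical rigidity content of the area theorem.

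The easy implications then follow at once from this identity. For $(i) \Rightarrow (iii)$: a hypothetical univalent non-Möbius $f$ yields, after Möbius pre- and post-composition into the above normalization, a non-constant AD-function $g$, contradicting $(i)$. For $(iii) \Rightarrow (ii)$: one first shows $|K| = 0$, since otherwise the Ahlfors--Bers measurable Riemann mapping theorem applied to a non-trivial Beltrami coefficient supported on $K$ with $\|\mu\|_\infty < 1$ produces a non-Möbius quasiconformal self-map of $\overline{\CC}$ whose restriction to $\cS$ is conformal and univalent, contradicting $(iii)$; once $|K| = 0$, any Möbius image of $\cS$ has complement $f(K)$ of measure zero. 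The implication $(ii) \Rightarrow (iv)$ is the heart of the Ahlfors--Beurling theorem and is proved by a duality between conformal area and extremal length, using that extremal metrics for path families in $\cS$ extend trivially across a set of absolute measure zero without affecting their Dirichlet integral.

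The main obstacle is the step $(iv) \Rightarrow (i)$: removability of AD-functions across a neglectable set. Given an AD-function $F$ on $\cS$, Sario's argument exploits the extremal-length invariance from $(iv)$ by a Fubini computation on curve families crossing $K$: on almost every curve in a family whose modulus is preserved by removal of $K$, the restriction of $F$ has derivative in $L^1$, which yields a continuous extension of $F$ across $K$ with finite Dirichlet integral on the compact sphere $\overline{\CC}$. Such an extension must be constant. Making this extension argument rigorous, by chasing the modulus invariance through the Fubini and verifying the resulting function remains AD on $\overline{\CC}$, is the technical crux of the theorem and is precisely the content of Sario's removable-singularity theorem for the class $O_{AD}$.
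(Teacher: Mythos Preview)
The paper does not give its own proof of this theorem: it is stated as a classical result ``traced back to L.~Ahlfors, A.~Beurling, R.~Nevanlinna and L.~Sario'' with a pointer to \cite{[Ah-Sa]}, IV.2B, p.~199, and no argument is supplied. So there is nothing in the paper to compare your proposal against; the paper simply imports the equivalence from the literature.

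That said, your sketch is essentially the classical route found in those references, and the logical skeleton is sound. A couple of remarks on the details. First, the finiteness of $D_\cS(g)$ for $g=f-z-a_0$ when $K$ is an arbitrary compact (not the unit disk) is exactly the generalized Gronwall area theorem, but the Stokes computation needs an exhaustion of $\cS$ by smoothly bounded regions, since $\partial K$ may be wild; you should say explicitly that the inner boundary contribution is controlled because the image $f(\cS)$ omits a bounded set, so that $\text{Area}(f(\cS_R))-\text{Area}(\cS_R)$ stays bounded as $R\to\infty$. Second, your use of the measurable Riemann mapping theorem for $(iii)\Rightarrow(ii)$ is a clean modern shortcut (anachronistic relative to Ahlfors--Beurling 1950, but valid). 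Third, the step $(iv)\Rightarrow(i)$ you flag as the crux is indeed the delicate one; your description of ``Fubini on curve families'' is the right idea, but the actual mechanism in Sario's proof is the extremal-length characterization of $O_{AD}$ via modular exhaustions rather than a direct extension argument, so the phrasing ``continuous extension of $F$ across $K$'' is slightly misleading---what one really shows is that any AD-function on $\cS$ already has the boundary behavior forcing it to be constant, without literally extending it.
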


What makes the class $O_{AD}$ particularly interesting
for our purposes is the strong removability property (iii).
The complement of a Cantor quadratic (or cubic, see \cite{[BH]})
Julia set is in $O_{AD}$. C. McMullen seems to have been 
the first to notice that. He formulated a 
useful variant of 
Sario's test for planar Riemann surfaces 
using modulus of annulus (see \cite{[Mc]} p.20):

\begin{theorem} { 
We assume that we have a tree whose vertex are annulus
$A\subset \CC-K$  such that if 
$B$ is a children of $A$ then $B$ is contained
in the bounded component $I(A)$ of $\CC -A$. If 
for any $z\in K$ there is an infinite branch $(A_i (z))$
deprived from the root 
with $z\in I (A_i (z))$ for all index $i$ and 
$$
\sum_{i\geq 1} \mod A_i (z) =+\infty \ ,
$$
then the Riemann surface $\cS$ is in the class $O_{AD}$.}
\end{theorem}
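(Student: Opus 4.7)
The plan is to apply Sario's modular test stated just above to an exhaustion $(U_n)$ of $\cS=\overline{\CC}\setminus K$ built directly from the tree of annuli. Starting from $U_0=\overline{\CC}\setminus \overline{I(A_0)}$ (the exterior of the root $A_0$), at each step we process one annulus of the tree by peeling off a single ``annular collar'' from the complement. Concretely, at step $n+1$ one chooses a tree annulus $C$ which is a child of some frontier annulus $A$, and one adds to $U_n$ an open collar $V_C\subset \cS$ whose outer boundary is a subarc of $\partial I(A)$ and whose inner boundary is $\partial I(C)$, arranged so that $V_C$ lies in $\overline{I(A)}\setminus \bigcup_{C'\neq C}\overline{I(C')}$ and is topologically an annulus containing $C$. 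Traversing the tree in a diagonal, parent-before-descendants fashion ensures every vertex is eventually processed, so $\bigcup_n U_n=\cS$: along each infinite branch $(A_i(z))$ guaranteed by the hypothesis, the divergence $\sum_i\mod A_i(z)=+\infty$ forces $\diam \overline{I(A_i(z))}\to 0$ by Gr\"otzsch, collapsing the nested inner disks onto $z\in K$.

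The key estimate is that each layer $U_{n+1}\setminus U_n=V_C$ is topologically annular and contains the tree annulus $C$, so any curve in $V_C$ joining its outer and inner boundaries must cross $C$; pulling back the flat extremal metric of $C$, extended by zero outside $C$, and applying the length--area inequality yields $m_n\geq\mod C$, where $C$ is the annulus processed at step $n+1$. Since the traversal enumerates all tree annuli, the subsequence of steps along any fixed infinite branch $(A_i(z))_{i\geq 1}$ gives
$$\sum_n m_n\ \geq\ \sum_i \mod A_i(z)\ =\ +\infty,$$
and Sario's test concludes $\cS\in O_{AD}$.

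The main obstacle is the combinatorial construction of the collars $V_C$: the processing order of siblings must be chosen so that each $V_C$ is genuinely annular (one outer and one inner boundary component) rather than a multiply connected region. If one instead peeled $\overline{I(A)}$ all at once into the children's closures $\bigcup_C\overline{I(C)}$, the path family from $\partial I(A)$ to $\bigcup_C\partial I(C)$ would offer parallel routes to each sibling, and a parallel-resistor composition could push the extremal length well below any single $\mod C$. Processing siblings one at a time, with the still-unprocessed ones' inner disks remaining in the complement of $U_n$ as obstacles, confines each collar to the required topological annulus and restores the bound $m_n\geq\mod C$.
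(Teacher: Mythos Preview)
Your proposal has a genuine gap in the key estimate $m_n\geq\mod C$. In Sario's test, $m_{nj}$ is the extremal length of the family of curves in $U_{nj}$ joining $\partial U_n\cap\overline{U_{nj}}$ to $\partial U_{n+1}\cap\overline{U_{nj}}$. When you carve out the collar $V_C$ by an arc $\gamma$ running through $I(A)$ from one point of $\partial I(A)$ to another (so as to separate $\overline{I(C)}$ from its unprocessed siblings), that arc $\gamma$ lies on $\partial U_{n+1}$: it bounds the region around the remaining siblings, which by your own prescription stays in the complement of $U_{n+1}$. Consequently $\partial U_n\cap\overline{V_C}$ is only the subarc of $\partial I(A)$ on the $C$-side, whereas $\partial U_{n+1}\cap\overline{V_C}=\partial I(C)\cup\gamma$. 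Since $\gamma$ and that subarc of $\partial I(A)$ share endpoints, there are arbitrarily short curves in $V_C$ joining the two sets, and the relevant extremal length is \emph{not} bounded below by $\mod C$. Put differently, your $V_C$ is an annulus as a planar domain, but Sario's partition of $\partial V_C$ into a $\partial U_n$-side and a $\partial U_{n+1}$-side does not coincide with the annular inner/outer partition; your length--area estimate is applied to the wrong curve family. A related symptom is that your exhaustion fails the hypothesis $\overline{U_n}\subset U_{n+1}$ of Sario's test outright, since the boundary curves around the unprocessed siblings sit on both $\partial U_n$ and $\partial U_{n+1}$.

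The paper does not prove this statement (it is attributed to McMullen), but for the analytic-tree corollary immediately afterwards it indicates how to run Sario's test correctly: take an exhaustion whose boundary components are pieces of equipotentials at \emph{different} potential levels, chosen so that all the $m_{nj}$ at a given step are \emph{equal}. This advances every front simultaneously, restoring strict nesting, and by construction $m_n=m_{nj}$ for every $j$, so the infimum in $m_n=\inf_j m_{nj}$ costs nothing. The hypothesis that every branch has divergent modulus sum is exactly what allows this equalized advance to continue indefinitely with $\sum_n m_n=\infty$. Your diagnosis of the parallel-resistor obstruction is correct; the fix is not to serialize the siblings but to equalize the step sizes across all active fronts.
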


\begin{remark} 

In the application to quadratic renormalization 
we use this geometric criterium in a 
weak form : All modulus of annuli will be bounded
from below.

\end{remark}

\bigskip

\begin{definition}\textbf {(Thin tree).}{A modular tree is a tree 
of annulus nested inside each other in the prescribed way by 
the tree, and vertices labeled by the modulus of 
the corresponding annulus. Analytic trees have an 
underlying modular tree structure. 

A modular tree is thin if it has only infinite branches
and along all branches the sum of the modulus of the 
annuli except the root is infinite.}
\end{definition}

McMullen's test can be applied to an Analytic 
Tree:

\begin{theorem}\label{thm_analytic_tree}{Let $\cT_\o$ be an analytic 
tree which is thin. 
Then the corresponding planar Riemann surface 
$\cS (\cT_\o ) \equiv 
\CC -K (\cT_\o )$ is in 
the class $O_{AD}$, i.e. $K(\cT_\o )$ is a Cantor
set of absolute measure zero and its embedding in the
plane is uniquely determined up to a Moebius transformation.}
\end{theorem}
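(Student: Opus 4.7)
The plan is to realize $\cS(\cT_\o)$ as a planar domain $\overline{\CC}-K$, apply McMullen's geometric variant of Sario's test to deduce $\cS(\cT_\o)\in O_{AD}$, and then read off the Cantor-set and rigidity conclusions from Theorem~\ref{thm_OAD}.

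First I would use the embedding of section~\ref{tree}: since $\cS(\cT_\o)$ is planar (every cycle divides), it is holomorphically equivalent to $\overline{\CC}-K$ for some compact $K=K(\cT_\o)$, with $\infty$ corresponding to the unpasted outer end of the root $A_0$. Under this embedding the round annuli glued along the tree map to a family of nested (curvilinear) annuli in $\overline{\CC}$ respecting the tree order, so that each child of $A$ lies in the bounded component $I(A)$ of $\CC-A$. Each $z\in K$ is then the limit of a unique infinite branch $(A_0,A_1(z),A_2(z),\ldots)$: thinness rules out finite branches, and the observation of section~\ref{tree} that $\sum_i\mod A_i=+\infty$ (with the root removed) forces the corresponding connected component of $K$ to be a single point.

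I would then invoke McMullen's test with the annuli $\{A_i(z)\}_{i\geq 1}$: for every $z\in K$ one has $z\in I(A_i(z))$ for all $i$ and $\sum_{i\geq 1}\mod A_i(z)=+\infty$ by thinness, so $\cS(\cT_\o)\in O_{AD}$. Theorem~\ref{thm_OAD}(ii) then yields that $K(\cT_\o)$ has absolute measure zero, and $K$ is a Cantor set: compact by construction, totally disconnected because each thin branch contracts to a single point (no non-trivial continuum appears), and perfect because at every node of the generically dyadic tree the sibling branches produce further points of $K$ inside $I(A_i(z))$, whose diameters shrink to zero by the divergence of the modulus sum along the branch. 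For the rigidity statement, suppose $K_1,K_2\subset\overline{\CC}$ both realize $\cT_\o$; the pointed isomorphism theorem of section~\ref{tree} gives a biholomorphism $f:\overline{\CC}-K_1\to\overline{\CC}-K_2$ with $f(\infty)=\infty$. Read as a univalent function on the complement of $K_1$, property (iii) of Theorem~\ref{thm_OAD} forces $f$ to be a Moebius transformation, so $K_2=f(K_1)$.

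The main obstacle will be the geometric setup in the first step: matching the abstract annuli of the tree with honestly nested annuli in the planar realization, and checking that the bounded components along each thin branch really shrink to a single point of $K$ (and that sibling branches accumulate at it, giving perfectness). Once this identification is in place, McMullen's test and Theorem~\ref{thm_OAD} finish the argument with no further analytic input.
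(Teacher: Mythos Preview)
Your proposal is correct and follows essentially the paper's own approach: the paper introduces the theorem with ``McMullen's test can be applied to an Analytic Tree'' and then remarks that one can alternatively run Sario's criterium via an exhaustion by pieces of equipotentials with all $m_{nj}$ equal, but gives no further details. Your write-up via the planar embedding, McMullen's annulus test, and Theorem~\ref{thm_OAD}(ii)--(iii) is exactly this, with the Cantor-set verification spelled out more carefully than in the paper.
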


Notice that the 
condition can be checked in a purely combinatorial way.

One can prove the previous 
theorem using Sario's test choosing a suitable 
exhaustion $(U_n)$ of $\cS (\cT_\o )$
where the components of the boundary of $U_n$ are 
composed by pieces of equipotentials (with different
potentials) and all modulus $(m_{n_j})_j$ are the same.

\medskip

\begin{definition} \textbf {(Thin and totally thin Cantor sets).}{
A tree of annulus $\cT_\o$ as in theorem \ref{thm_analytic_tree} for a compact set $K$
will be called a tree associated to $K$ if $K=K(\cT_\o )$. In particular 
the analytic tree of $K$ is a tree associated to $K$.

A Cantor set $K$ 
is a thin Cantor set if there is an associated 
tree which is thin. Then $\CC -K$ is an $O_{AD}$ Riemann 
surface and $K$ has absolute measure zero. 

A Cantor set $K$ of positive capacity with all points regular
is a potentially thin Cantor set if its 
analytic tree is thin.
}
\end{definition}

Note that in the definition of thin Cantor set we include
Cantor sets of zero capacity (but not for 
potentially thin Cantor sets).

\subsection {Potential and virtual conformal structures.} \label{conf_struct}

We define in this section {\it potential conformal structures}
and the notions of {\it Green}, {\it equipotential} and 
{\it potential} equivalence between conformal structures.

\begin{definition} \textbf {(Equipotential, Green and potential 
equivalence).} {Let $K \subset \CC$ be a compact set 
with all points regular for the Dirichlet problem. 
A conformal structure $\xi$ 
on $\CC-K$ is 
equipotentially (resp. Green, potentially) 
equivalent to another conformal structure 
$\eta$ on $\CC -K$ if
\begin{align*}
\xi' &=(\varphi_K^{-1} \circ E)^* \ \xi \\
\eta' &=(\varphi_K^{-1} \circ E)^* \ \eta ,\\
\end{align*}
are equi-potentially (resp. Green, potentially) equivalent
 in $\HH /\ZZ$, that
is, there exists 
$$
L : \HH \to \HH
$$
of the form $(x,y) \mapsto (d(x), y)$ (resp. $(x,y)\mapsto 
(x, k(y))$; $(x,y)\mapsto (d(x), k(y))$), 
where $d:\TT \to \TT$ is an absolutely continuous 
homeomorphism (resp. and  $k : \RR_+ \to \RR_+$ is an absolutely
continuous increasing homeomorphism) and 
$$
\eta' =L_* \xi' .
$$
}
\end{definition}

\begin{definition} \textbf {(Potential conformal structure).}{
A potential conformal structure is a conformal structure
potentially equivalent to $\s_0$.
}
\end{definition}

\begin{remark} 

The ellipses in the tangent spaces 
defining a potential conformal structure  
have their principal axes tangent or orthogonal to 
equipotentials. Observe that $L$ is differentiable 
almost everywhere thus a potential conformal structure 
has a well defined Beltrami form.

\end{remark}

\bigskip

A potential conformal structure is locally quasi-conformal 
on ${\overline {\CC}}-K$ if and only if both $d$ and 
$k$ are locally lipchitz.

In other words, a potential conformal structure $\xi$ is compatible with the 
potential of $K$. For the potential theory with respect to 
the  new conformal structure equipotentials
and Green lines are the same than for the potential theory 
with respect to the standard conformal structure.  
More precisely we have the following theorem:

\begin{theorem}{Assume that $\xi$ is a quasi-conformal 
potential conformal structure with the above notations. 
Let $h_\xi :
{\overline {\CC}} \to {\overline {\CC}}$ be a rectification
of $\xi$ such that $h_\xi (\infty ) =\infty$. 
Then all points of $h_\xi (K)$ are regular and 
if $K'=h_\xi (K)$ we can choose a normalization of $h_\xi$
such that the map  
$$
\varphi=h_\xi^{-1} \circ \varphi_K \circ E^{-1}
\circ L \circ E
$$
is tangent to the identity at $\infty$ and then we have
$$
\varphi_{K'}=\varphi .
$$
In particular $h_\xi (\G (K))$ is the skeleton of $K'$
and  for $z\in {\overline {\CC}}-K$,
\begin{align}
G_{K'} (h_\xi (z)) &=k ( G_K (z)) \\
\t_{K'} (h_\xi (z)) &= d (\t_K (z)) .\\
 \end{align}

}
\end{theorem}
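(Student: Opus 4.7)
The plan is to exhibit $\varphi_{K'}$ explicitly as a composition and then read off the two displayed formulas. The key object is the map
$$
\Psi := E \circ L \circ E^{-1} \circ \varphi_K \circ h_\xi^{-1},
$$
defined on $\overline{\CC} \setminus S$ where $K' = h_\xi(K)$ and $S = K' \cup h_\xi(\G(K))$. I would first check that $\Psi$ is holomorphic for the standard structure: $h_\xi^{-1}$ carries $\sigma_0$ to $\xi$ by rectification; $\varphi_K$ carries $\xi$ to $E_*\xi'$ on $\overline{\CC}-\overline{\DD}$, where $\xi' = (\varphi_K^{-1}\circ E)^*\xi = L^*\sigma_0$ by the definition of potential equivalence; then $E^{-1}$ transports this to $L^*\sigma_0$ on $\HH/\ZZ$; $L$ rectifies $L^*\sigma_0$ to $\sigma_0$; and $E$ returns holomorphically to $\CC-\overline{\DD}$. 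The fact that $L$ preserves the foliation by vertical segments ensures $\Psi$ is single-valued on $\overline{\CC}-S$.

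Next, the rectification $h_\xi$ is determined only up to post-composition by a Mo\"ebius transformation fixing $\infty$, i.e.\ by an affine map $z\mapsto az+b$. I would use these two complex parameters to normalize so that $\Psi$ is angularly tangent to the identity at $\infty$. Since $\varphi_K$ is tangent to the identity at $\infty$, the asymptotic behavior of $E\circ L\circ E^{-1}$ at $i\infty$ in $\HH/\ZZ$ (governed by $\lim_{y\to\infty}(k(y)-y)$ and the mean value of $d(x)-x$) produces a complex scalar that is absorbed by adjusting $a$ and $b$, yielding the desired normalization.

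Once $\Psi$ is tangent to the identity at $\infty$, I claim $\Psi = \varphi_{K'}$. For this I need all points of $K'$ to be regular. Consider $G'(w) := \log|\Psi(w)|$, which is harmonic and positive on $\overline{\CC}-S$ with logarithmic singularity at $\infty$. Unwinding the composition,
$$
G'(h_\xi(z)) \;=\; \log\bigl|E\circ L\circ E^{-1}\bigl(\varphi_K(z)\bigr)\bigr| \;=\; 2\pi\,k\!\left(\tfrac{1}{2\pi}\log|\varphi_K(z)|\right) \;=\; 2\pi\,k\!\left(\tfrac{G_K(z)}{2\pi}\right),
$$
which is $k(G_K(z))$ once the $2\pi$ factor is absorbed into $k$ (matching the statement's normalization of $k$). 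Since $K$ has all points regular, $G_K(z)\to 0$ as $z\to K$; continuity of $k$ at $0$ with $k(0)=0$ then forces $G'(w)\to 0$ as $w\to K'$, so every point of $K'$ is regular and $G'=G_{K'}$. Uniqueness of the Green map tangent to the identity at $\infty$ (Proposition in \S\ref{potential}) now gives $\varphi_{K'} = \Psi$, and the formula for the angle $\theta_{K'}\circ h_\xi = d\circ \theta_K$ follows by reading the imaginary coordinate (in the $E\circ L\circ E^{-1}$ decomposition) in the same way. The identity $h_\xi(\G(K)) = \G(K')$ is then immediate: skeletons are the loci of critical points of the Green functions, and the identification $\varphi_{K'}\circ h_\xi = E\circ L\circ E^{-1}\circ\varphi_K$ matches critical points on both sides since $L$ is a diffeomorphism.

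The main obstacle is the normalization step. Showing that the two-parameter affine ambiguity in $h_\xi$ exactly matches the scalar ambiguity in the asymptotic expansion of $\Psi$ at $\infty$ requires controlling the boundary behavior of $k$ at $+\infty$ and of $d$ on $\TT$; if $k(y)-y$ fails to have a limit or $d$ fails to have a well-defined mean rotation, the tangent-to-identity normalization must be replaced by a weaker one, and some care is needed to confirm that $\Psi$ remains globally univalent (rather than merely locally near $\infty$) after this choice. Global univalence ultimately rests on the fact that each factor in the composition is a bijection onto a suitable cut-open domain and the pastings induced by $L$ are compatible with those induced by $\varphi_K$.
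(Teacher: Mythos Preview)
Your approach is essentially the paper's own: show that the candidate map is holomorphic, normalize, and then identify $\log|\Psi|$ with the Green function of $K'$. The paper dispatches this in three sentences; you have correctly unpacked why the composition is holomorphic by tracking how $\sigma_0$ is carried through the chain via the identity $(\varphi_K^{-1}\circ E)^*\xi = L^*\sigma_0$. Note also that your $\Psi$ is the inverse of the displayed $\varphi$ in the statement; your direction is the one that actually lands in $\CC-\overline{\DD}$ and makes $\log|\Psi|$ a candidate Green function, and it agrees with the formula appearing later in Theorem~\ref{rectif3}.

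The one place where you are overcomplicating matters is the final paragraph. You do not need any asymptotic information about $k(y)-y$ as $y\to\infty$ or any ``mean rotation'' of $d$. The map $\Psi$ is holomorphic and univalent in a full neighborhood of $\infty$ (the skeletons stay away from $\infty$), so it has a well-defined nonzero derivative there; the only task is to rotate that derivative to be real and positive. The paper does this by composing $h_\xi$ with a single linear dilatation $z\mapsto \lambda z$. There is no obstruction, and no subtle boundary behavior enters. Global univalence of $\Psi$ is likewise immediate: each factor is a bijection between the indicated domains, so the composition is a bijection onto its image.
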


\begin{proof}{The proof is straightforward. The 
mapping 
$$
\varphi =h_\xi^{-1} \circ \varphi_K \circ E^{-1}
\circ L \circ E
$$
is holomorphic and defines a holomorphic diffeomorphism
in a neighborhood of $\infty$. With the appropriate
normalization of $h_\xi$ (composing with a linear dilatation)
we can make $\varphi$ tangent to the identity at $\infty$.
Then $\log |\varphi |$ is the Green function of $K'$.}
\end{proof}

We observe that a potential conformal structure is uniquely
determined by the compact $K$ and by the mapping $L$, or 
more precisely by the mappings $d$ and $k$.
Now if $d$ is a uniform limit of orientation homeomorphisms
of the circle $d_n$ we can think to $(d, k)$ as defining 
a singular conformal structure $\xi$ even in the case
when $d$ is not a homeomorphism (for example when it maps intervals into single points). 
The main 
result of next section proves that under natural restrictions
on $d$ and $k$ the rectifications of $\xi_n$ defined by 
$(d_n, k)$ do converge to a "rectification" of $\xi$.

\medskip

\textbf {Virtual conformal structures.} 

\medskip

Consider a metric space $X$.
Let $\cC (X )$ be the space of closed 
continuous correspondences on $X$, this is 
the set of closed connected subsets of $X \times
X$ endowed with the Hausdorff topology associated 
to the product distance in $X \times X$.
Recall that correspondences can be composed: If 
$d_1 , d_2 \in \cC (X )$,
$$
d_2\circ d_1 =\{ (x,z) \in X^2 ; {\hbox {\rm 
there exists }} y\in X, \ 
(x,y)\in d_1 {\hbox {\rm   and  }} (y,z)\in d_2 \} .
$$

Consider now $\Homeo(X)$ the 
space of homeomorphisms 
of $X$ endowed with the topology of uniform
convergence.

The graph map that associates to an element
$\varphi \in \Homeo (X)$ its 
graph
${\cG }:  {\Homeo (X)} \to {\cC ( X)} $,

$$ 
 {\varphi } \mapsto {\cG (\varphi )=
\{ (x,y)\in X^2 ; y=\varphi (x) \}}
$$
gives a continuous embedding of 
$\Homeo (X)$ into $\cC (X)$.

\begin{definition}\textbf{(Map correspondences).}{

We define 
$$
\cC_0 (X )={\overline {
\cG (\Homeo_+ (X) )}}\subset
\cC (X)
$$ 
to be the space of limit 
homeomorphisms, and we define 
$$
\hat {\cC}_0 (X) =\cC_0 (X) \cap C^0 (X,X)
$$
to be the space of map correspondences (where $C^0 (X,X)$ denotes
the spaces of continuous maps from $X$ into $X$).}
\end{definition}

We are interested in the case $X=\TT$.
The following proposition is immediate.

\begin{proposition}{
The space $\cC_0 (\TT)$ is homeomorphic to 
the space $\cM (\TT )$ of probability 
measures on $\TT$ endowed with the weak 
topology. We have a homeomorphism
${\int} : {\cM (\TT )}\to {\cC_0 (\TT )}$
$$
{\mu } \mapsto {d=\int d\mu }
$$
where $d=\int d\mu$ is defined by the 
fact that $d\cap ( \{ x\} \times \TT )$ is always 
a point or an interval and 
\begin{align*}
\left | d\cap ( \{ x\} \times \TT ) \right | &=
\mu (\{ x \} ) ,\\
\sup_{\genfrac{}{}{0pt}{}{(x,y)  \in d}{0\leq x \leq x_0}} y &=
\mu ([0,x_0]) =\int_0^{x_0} \ d\mu .\\
\end{align*}

The same map induces a homeomorphism from the space
$\cM_{na} (\TT)$ of non atomic probability measures
into the space of map correspondences $\hat {\cC }_0 (
\TT)$.
}
\end{proposition}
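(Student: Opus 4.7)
The plan is to construct explicitly the map $I : \mu \mapsto d_\mu$ given by the formulas in the statement, show it is a bijection from $\cM(\TT)$ onto $\cC_0(\TT)$, and then check that it is a homeomorphism for the weak and Hausdorff topologies. First I would verify that the formulas
\[
d_\mu \cap (\{x\} \times \TT) = \{x\} \times [F_\mu(x^-), F_\mu(x)], \qquad F_\mu(t) = \mu([0,t]),
\]
unambiguously define a closed connected subset $d_\mu \subset \TT \times \TT$. Closedness comes from the one-sided continuity of the monotone function $F_\mu$, and connectedness comes from the fact that the vertical intervals added over atoms of $\mu$ fill precisely the gaps in the image of $F_\mu$. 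Injectivity of $I$ is immediate because both the atomic mass $\mu(\{x\})$ and the distribution function $F_\mu(x_0)$ are recovered from $d_\mu$ by the two stated formulas.

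Next I would show $d_\mu \in \cC_0(\TT)$ by approximation. When $\mu$ is non-atomic and fully supported, $F_\mu$ is an orientation-preserving homeomorphism of $\TT$, so $d_\mu = \cG(F_\mu) \in \cG(\Homeo_+(\TT))$. For an arbitrary $\mu$, choose $\mu_n$ of this special form with $\mu_n \to \mu$ weakly (e.g.\ $\mu_n = (1-1/n)(\rho_n * \mu) + (1/n)\,\mathrm{Leb}$, with $\rho_n$ a smooth approximate identity). The central technical point is that weak convergence $\mu_n \to \mu$ is equivalent to Hausdorff convergence $d_{\mu_n} \to d_\mu$ in $\TT \times \TT$. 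The forward direction uses the Portmanteau-type characterization of weak convergence via pointwise convergence $F_{\mu_n}(x) \to F_\mu(x)$ at continuity points of $F_\mu$, together with the monotonicity of all the $F_{\mu_n}$, which controls the Hausdorff distance between the generalized graphs at atoms and flat pieces as well.

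For surjectivity, take $d \in \cC_0(\TT)$, write $d = \lim \cG(\varphi_n)$ with $\varphi_n \in \Homeo_+(\TT)$, and let $\mu_n$ be the probability measure whose distribution function is (a normalization of) $\varphi_n$. By weak compactness of $\cM(\TT)$ extract a subsequence $\mu_{n_k} \to \mu$; the equivalence established above yields $d_{\mu_{n_k}} \to d_\mu$, hence $d = d_\mu$. Continuity of $I$ is the forward direction of the same equivalence. Continuity of $I^{-1}$ follows by the familiar "unique subsequential limit" argument: any weak cluster point of $(\mu_n)$ maps under $I$ to the Hausdorff limit of $(d_{\mu_n})$, and injectivity of $I$ pins down a single cluster point, forcing $\mu_n \to \mu$ weakly.

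Finally, for the second assertion: $d_\mu$ is the graph of a genuine map $\TT \to \TT$ exactly when each vertical fiber is a single point, i.e.\ $\mu(\{x\}) = 0$ for all $x$, i.e.\ $\mu$ is non-atomic; the resulting map is automatically continuous because $d_\mu$ is a closed subset of the compact space $\TT \times \TT$. So $I$ restricts to a homeomorphism $\cM_{na}(\TT) \to \hat{\cC}_0(\TT)$. I expect the main obstacle to be the equivalence between weak convergence of measures and Hausdorff convergence of the generalized graphs. The delicate points are the atoms of $\mu$, where the vertical fillings of $d_\mu$ must be recovered as Hausdorff limits of steep monotone arcs in the graphs of the $F_{\mu_n}$, and the flat pieces of $F_\mu$, where the horizontal segments of $d_\mu$ appear as limits of near-horizontal portions of the $\cG(F_{\mu_n})$; once monotonicity is exploited these cases reduce to a careful use of Dini's theorem for monotone convergence of bounded functions.
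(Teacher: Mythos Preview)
The paper does not actually prove this proposition: it is introduced with the sentence ``The following proposition is immediate'' and no argument is given. Your proposal is a correct and complete way to supply the details the author omits. The structure you outline---build $d_\mu$ from the distribution function $F_\mu$, approximate an arbitrary $\mu$ by fully supported non-atomic measures to land in $\overline{\cG(\Homeo_+(\TT))}$, use weak-$*$ compactness of $\cM(\TT)$ for surjectivity, and deduce continuity of $I^{-1}$ from injectivity plus compactness---is exactly the standard route and works without difficulty.

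One small remark: your closing appeal to Dini's theorem is not quite the right tool. Dini requires the \emph{sequence} $(F_{\mu_n})$ to be monotone in $n$ and the limit to be continuous, neither of which holds here. What you actually need (and what your earlier paragraph already says correctly) is the elementary fact that for a sequence of nondecreasing functions on a compact interval, pointwise convergence at all continuity points of the limit forces Hausdorff convergence of the filled-in graphs; this is a direct $\varepsilon$-$\delta$ argument exploiting the monotonicity of each $F_{\mu_n}$ individually, closer in spirit to Helly's selection principle than to Dini. With that adjustment your sketch goes through.
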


To each homeomorphism $d\in \Homeo_+ (\TT)$
we have associated before a homeomorphism 
$L:\HH \to \HH$. Now to $d\in \hat {\cC}_0 (\TT)$
and $k\in \Homeo_+ (\RR_+)$
we can also associate in the same way a map correspondence of 
$\HH /\ZZ$, $L\in \hat {\cC}_0 (\HH)$,
${L} : {\HH /\ZZ} {\HH /\ZZ}$,
$$
{(x,y)}  \mapsto {(d (x) , k(y))}.
$$

From a classical point of view, it doesn't make sense to talk about the 
``complex structure'' $L_* \s_0$, but all the information 
is contained in $L$ or more precisely on the pair
$(d,k) \in \hat {\cC}_0 (\TT) \times 
{\Homeo}_+ (\RR_+)$.

\begin{definition} \textbf{(Virtual conformal structure).}
{A virtual conformal structure on $\HH /\ZZ$ or
${\overline {\CC}}$ 
is identified with a map correspondence 
$d\in \hat {\cC}_0 (\TT )$ (or with 
its corresponding non-atomic probability measure 
$\mu_d \in \cM_{na} (\TT )$) and an orientation
preserving homeomorphism $k \in {\rm Homeo}_+ (\RR_+)$.}
\end{definition}

\bigskip

{\bf Observation.}

We can give a more general definition allowing $k\in 
\hat {\cC}_0 (\RR_+ )$ and prove similar 
rectification theorems but this is unnecessary for the 
applications.

\subsection {Rickman's theorem.}\label{rickman}

We use at different places 
Rickman's removability theorem for 
quasi-conformal mappings (see \cite{[Ri]}).
Rickman's theorem has been used before 
in the theory of polynomial-like mappings
(see \cite{[DH]}). Rickman's theorem holds in higher
dimension. 

\begin{theorem} (S. Rickman) \label{thm_rickman}
{Let $U\subset \CC$ be  an open set, 
$K\subset U$ be a closed in $U$ (for the relative 
topology in $U$) , $f$ and $g$ be 
two mappings $U\to \CC$ which are homeomorphisms
onto their images. Suppose that $g$ is 
quasi-conformal, that $f$ is quasi-conformal 
on $U-K$ and that $f =g$ on $K$. Then 
$f$ is quasi-conformal, and $D f =D g$ 
almost everywhere on $K$.}
\end{theorem}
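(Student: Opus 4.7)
The plan is to reduce the theorem to showing a single homeomorphism is quasi-conformal, establish the absolutely continuous on lines (ACL) property across the possibly large singular set, and conclude via the analytic characterization of quasi-conformality. First I would use the quasi-conformality of $g$ to reduce the problem: showing $f$ is QC is equivalent to showing $h := f \circ g^{-1}$ is QC on $V := g(U)$. Then $h$ is a homeomorphism onto $f(U)$, locally quasi-conformal on the open set $V - E$ where $E := g(K) \subset V$ is closed in $V$, and the hypothesis $f = g$ on $K$ translates into $h = \mathrm{id}$ on $E$.

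The heart of the proof is to establish the ACL property of $h$ across $E$. I would fix a coordinate rectangle $R$ compactly contained in $V$ and show $h|_{L_y \cap R}$ is absolutely continuous for almost every horizontal line $L_y$ (vertical lines being symmetric). For a.e.~$y$: the slice $L_y \cap E$ is closed and $h = \mathrm{id}$ on it; on each component interval of $L_y \cap R - E$ the restriction $h$ is absolutely continuous on compact subsets by ACL of quasi-conformal maps on $V - E$; and the horizontal partial $v := |\partial_x h|$ lies in $L^2(L_y \cap R - E)$ by Fubini applied to $|Dh|^2 \in L^1_{\mathrm{loc}}(V - E)$. Given disjoint subintervals $J_k = [a_k, b_k] \subset L_y \cap R$ with $\sum_k |J_k| < \delta$, I would bound $|h(b_k) - h(a_k)|$ by the arc-length of $h|_{J_k}$, split this length into contributions from $J_k \cap E$ (on which $h$ is the identity) and $J_k - E$ (on which arc-length equals $\int v\, dx$), and apply Cauchy--Schwarz to get
\[
\sum_k |h(b_k) - h(a_k)| \;\leq\; \sum_k |J_k \cap E| + \sum_k \int_{J_k - E} v\, dx \;\leq\; \delta + \|v\|_{L^2(L_y - E)}\, \sqrt{\delta},
\]
which tends to zero with $\delta$, proving the desired absolute continuity.

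Once ACL is in hand, $h$ lies in $W^{1,2}_{\mathrm{loc}}(V)$ and its pointwise derivative satisfies the distortion inequality $|Dh|^2 \leq K_0 J_h$ almost everywhere (on $V - E$ by the quasi-conformality of $h$ there, and on $E$ trivially since $h$ is the identity with distortion $1$). The analytic characterization of quasi-conformal mappings then shows $h$ is QC on $V$, hence so is $f = h \circ g$. For the final claim $Df = Dg$ almost everywhere on $K$, both derivatives exist at almost every point of $K$ by quasi-conformality of $f$ and $g$, and at a Lebesgue density point of $K$ at which both derivatives exist, the agreement $f = g$ on a set of density one forces $Df = Dg$.

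The hardest step will be the ACL argument: the set $E$ can have positive linear measure on typical lines, so the singular locus cannot simply be discarded, and the Fubini--Cauchy-Schwarz estimate relies crucially on the distributional $L^2$-integrability of $Dh$ inherited from quasi-conformality on the complement. Arc-length versus variation must be handled carefully since a priori $h$ is not yet known to be AC on $J_k$, but using $|h(b_k) - h(a_k)| \leq \mathrm{length}(h|_{J_k})$ and expressing the length over $J_k - E$ as $\int v\,dx$ via the ACL property away from $E$ bypasses this circularity.
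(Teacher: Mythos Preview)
The paper does not give a proof of this theorem; it is quoted from Rickman \cite{[Ri]}, and only the quasi-regular corollary following it is argued in the text. So there is no ``paper's own proof'' to compare against; your sketch has to be judged on its merits.

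Your strategy---pass to $h=f\circ g^{-1}$ so that $h=\mathrm{id}$ on $E=g(K)$, prove ACL across $E$ using the $L^2$-bound on $Dh$ off $E$, then invoke the analytic definition---is the standard route and is correct in outline. One point needs tightening: the displayed inequality with $|J_k\cap E|$ is not what an ``arc-length'' argument actually yields, because total variation does not decompose additively over the partition $J_k=(J_k\cap E)\cup(J_k\setminus E)$ when $J_k\cap E$ is, say, a Cantor set. The clean repair is to set $\phi:=h-\mathrm{id}$, which vanishes on $E$ and is continuous; if $[a,b]\cap E\neq\emptyset$ with $\alpha=\inf([a,b]\cap E)$, $\beta=\sup([a,b]\cap E)$, then $|\phi(b)-\phi(a)|\le|\phi(a)|+|\phi(b)|\le\int_{[a,\alpha)\cup(\beta,b]}|\phi'|\,dx$, whence $|h(b_k)-h(a_k)|\le 2|J_k|+\int_{J_k\setminus E}v\,dx$ and your Cauchy--Schwarz step goes through with $2\delta$ in place of $\delta$. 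A second implicit point worth making explicit is that the crucial bound $\int_{R\setminus E}|Dh|^2\le K_0\int_{R\setminus E}J_h\le K_0\,|h(R)|<\infty$ uses that $h$ is a \emph{global} homeomorphism of $V$ onto its image; mere local quasi-conformality on $V\setminus E$ would only give $L^2_{\mathrm{loc}}$ there, which is not enough near $E$. The Lebesgue-density argument for $Df=Dg$ a.e.\ on $K$ is fine.
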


We use in the applications to renormalization 
a version of Rickman's theorem
for quasi-regular mappings. First 
we recall the definition
of a topological branched covering.

\begin{definition} {A map $f$ is a topological 
branched covering if $f$ is locally the composition of 
a holomorphic map and a homeomorphism.

The map $f$ is quasi-regular if $f$  is 
locally the 
composition of 
a holomorphic map and a quasi-conformal homeomorphism.}
\end{definition}

\begin{theorem}{Let $U\subset {\overline {\CC}}$ be 
an open set and $K\subset U$ be a closed set in $U$.
Let $f$ and $g$ be two topological branched coverings
from $U$ into their images.

We assume that $g$ is quasi-regular,  
$f$ is quasi-regular on $U-K$, and that 
$f=g$ on $K$.
Then $f$ is quasi-regular and almost everywhere in $K$
$$
Df =Dg .
$$
}
\end{theorem}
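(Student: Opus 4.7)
Since quasi-regularity is a local property, it suffices to prove that $f$ is quasi-regular on a neighborhood of an arbitrary point $z_0\in K$; outside $K$ the conclusion is already in the hypothesis. Apply Stoilow's factorization theorem to the quasi-regular map $g$: shrinking $U$ to a disk neighborhood $V$ of $z_0$, write $g=\phi\circ h$, where $h:V\to\DD$ is a quasi-conformal homeomorphism with $h(z_0)=0$ and $\phi:\DD\to\CC$ is holomorphic. Quasi-conformal pre-composition preserves the topological-branched-covering structure of $f$, its quasi-regularity on $V\setminus K$, and the identity $f=g$ on $K$; therefore, replacing $(V,f,g,K)$ by $(\DD,f\circ h^{-1},\phi,h(K))$, we may assume from the outset that $g$ is holomorphic and $g(z_0)=0$.

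By the local topological model theorem for planar branched coverings, after further shrinking there exist a homeomorphism $\beta:V\to\DD$ with $\beta(z_0)=0$ and an integer $m\ge 1$ (the local degree of $f$ at $z_0$) such that
$$
f(z)=\beta(z)^m\quad\text{on }V.
$$
It thus suffices to show that $\beta$ is quasi-conformal on $V$. On $V\setminus K$, the quasi-regularity of $f=\beta^m$, combined with extraction of a local holomorphic $m$-th root of $f$ on simply connected pieces (away from the discrete critical set of $f$) and a consistent choice of $m$-th root of unity, shows that $\beta$ is quasi-conformal there. On $K\cap V$ the equation $\beta^m=g$ holds; writing $g(w)=w^n u(w)$ with $u$ holomorphic and $u(z_0)\neq 0$, the requirement that $\beta$ be a \emph{homeomorphism} with $\beta(z_0)=0$ constrains the choice of root and forces, when $K$ has positive local measure, the divisibility $m\mid n$; then $\beta$ agrees on $K$ with the holomorphic map $w\mapsto\omega\,w^{n/m}u(w)^{1/m}$ for a suitable $m$-th root of unity $\omega$. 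Thus $\beta$ satisfies the hypotheses of the homeomorphism version of Rickman's theorem (Theorem~\ref{thm_rickman}): it is a homeomorphism on $V$, quasi-conformal on $V\setminus K$, and coincides on $K$ with a quasi-conformal (indeed holomorphic) map. Theorem~\ref{thm_rickman} yields that $\beta$ is quasi-conformal on all of $V$, with $D\beta=D(g^{1/m})$ almost everywhere on $K$. Consequently $f=\beta^m$ is quasi-regular on $V$, and differentiating the identity $\beta^m=g$ gives $Df=Dg$ almost everywhere on $K$.

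The most delicate step is the branch-matching at a critical point $z_0$ of $g$: one must verify that a single-valued choice of the $m$-th root of $g$ can be made consistently on all of $V$, patching with the choice of $\beta$ already made on $V\setminus K$, so that the hypotheses of Theorem~\ref{thm_rickman} are literally satisfied. When $K$ has Lebesgue measure zero near $z_0$ the local degrees $m$ and $n$ need not coincide, and the argument instead reduces to the removability of an isolated critical point for quasi-regular maps. When $K$ has positive measure, the divisibility $m\mid n$ is forced by the single-valuedness of $\beta$, and the ensuing branch identification across $K$ is the crux of the proof.
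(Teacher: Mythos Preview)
Your approach is substantially more elaborate than needed, and the branch-matching step you flag as ``the crux'' is not actually carried out; as written the proof is incomplete. The case split on whether $K$ has positive measure near $z_0$ does not close the gap: when $K$ has measure zero near $z_0$ you reduce to ``removability of an isolated critical point'', but $z_0$ need not be isolated in $K$, so you have not shown $f$ is quasi-regular on a punctured neighborhood of $z_0$; when $K$ has positive measure, the assertion that single-valuedness of $\beta$ forces $m\mid n$ is stated without argument, and even granting it, the consistent patching of the chosen branch of $g^{1/m}$ with the already-determined $\beta$ on $V\setminus K$ is not established. In short, you have relocated the difficulty to a root-extraction problem at a branch point and left it there.

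The paper's proof avoids all of this by never working at a critical point. One simply observes that the set of critical points of $f$ and of $g$ is discrete in $U$. At any $z\in K$ which is not critical for either map, both $f$ and $g$ restrict to homeomorphisms on a small neighborhood $V$, and the homeomorphism version of Rickman's theorem (Theorem~\ref{thm_rickman}) applies verbatim: $f$ is quasi-conformal on $V$ and $Df=Dg$ a.e.\ on $K\cap V$. Since the excluded critical set is countable and hence of measure zero, this already yields $Df=Dg$ a.e.\ on $K$ and quasi-regularity of $f$ off a discrete set; quasi-regularity at the remaining isolated points is then the standard removability of a point for a topological branched covering that is quasi-regular on a punctured neighborhood. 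No Stoilow factorization, no root extraction, and no degree comparison are needed.
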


\begin{proof}{At each point $z$ which is not 
a critical point of $f$ or $g$ we use Rickman's 
theorem for homeomorphisms in a neighborhood
$V$ of $z$ to get $Df =Dg$ almost everywhere
in $ K\cap V$ ($K\cap V$ is closed in $V$ for 
the relative topology in $V$). 
The remaining points form a 
discrete set in $U$ of zero Lebesgue measure.}
\end{proof}

\section {Rectification of degenerate conformal structures.} \label{rectification}

\subsection {Introduction.}

Morrey-Ahlfors-Bers theorem proves the existence of 
solutions of the Beltrami equation
$$
\bar \partial f =\mu  \ \partial f
$$
when $\mu$ is measurable and bounded $||\mu ||_{L^\infty} <1 $.
This is a very powerful theorem of rectification 
of conformal structures quasi-conformal with respect 
to the standard structure.
Numerous efforts have been devoted to weak the boundeness 
assumption, i.e. to remove the quasi-conformality assumption.  
O. Lehto \cite{[Le1]} \cite{[Le2]} and G. David \cite{[Da]} proved  
existence theorems for degenerate 
Beltrami forms with $||\mu ||_{L^\infty} =1$ 
with assumptions on the measure of the set where $\mu$ 
degenerates (i.e. where $\mu $ gets close 
to $1$ in absolute value). Typically David's 
condition requires that the measure of the set where 
$\mu$ degenerates decreases exponentially fast:
$$
|\{ z\in \CC ; 1-\eps < |\mu (z) |\}| \leq C_0 e^{-\frac{1}{\eps}}.
$$
Lehto's condition is more general, allows cancellations, 
and is expressed in an integral
form (we refer to Lehto's articles cited before for a precise definition). This type of conditions is necessary 
in order to preserve 
the topology (i.e. to have a rectification that is indeed a homeomorphism).
It is easy to construct conterexamples where the topology 
is destroyed (for example when there are annulus with infinite 
modulus have finite modulus with respect to the new conformal 
structure). 
The solutions that these authors obtain are always almost 
everywhere differentiable homeomorphisms. 
Unfortunately these theorems 
don't seem to be adapted to the type of wild "collapsing"

We present in this section new general theorems of existence 
of solutions of "the Beltrami equation" for  
Beltrami forms $\mu$ such that $||\mu ||_{L^\infty} =1$.

In the first theorem (section \ref{rectif_thm}) the Beltrami form $\mu$ will be 
bounded away from $1$ on compact subsets of  
$\CC-K$ where $K$ is a thin Cantor set. The main 
requirement is that there is a tree associated to $K$
that  is thin with respect to the {\it new} conformal structure.
Then the solution will be a unique 
almost everywhere differentiable homeomorphism.
The same idea will prove the existence of 
solutions of the Beltrami equation
in a unique way for a class of extremely degenerate Beltrami
forms not even defining quasi-conformal structures
in compact subsets of the complement of $K$, which this
time is assumed to be potentially thin (section \ref{rectif_thm}) The
conformal structures defined by such degenerate Beltrami 
coefficients are of a very special nature. They are {\it potential}
conformal structures which in general terms it means that 
they are compatible with the potential of $K$. The solutions 
will still be homeomorphisms.
The next step (section \ref{rectif_virt}) generalizes the previous 
rectification theorems to {\it virtual } conformal structures
that are not conformal structures in a classical sense.
They are defined by sequences of degenerating potential 
conformal structures. These rectifications do have 
a uniquely determined limit that can be though of as the 
"rectification" of the virtual conformal structure.  
In general the solutions this time will not even be homeomorphisms.
They will be continuous mappings from the Riemann sphere 
into itself.
For example, we will show examples where a full measure dense open set is 
collapsed into a set of measure zero.

The idea at the base of these new rectification theorems is to 
consider Beltrami coefficients that do degenerate into 
a thin or potentially thin Cantor set. The removability of this thin
Cantor set is used to construct the unique rectification.
More precisely the image of the thin Cantor set is uniquely 
determined and is the Hausdorff limit of its image by rectifications
of approximating quasi-conformal structures. When considering
conformal structures that do respect the potential theory 
in the exterior of the potentially thin Cantor set, from the 
convergence of the images of the Cantor sets we get the 
convergence of the potential and therefore the convergence of the 
rectifications to a unique limit mapping.

\subsection { First rectification theorems.}\label{rectif_thm}

\begin{theorem}\label{rectif1}{We consider a compact set $K\subset \CC$.
Let $\xi$ be a conformal structure on $\CC-K$. We assume that 
$\xi$ is quasi-conformal on compact subsets of $\CC-K$.
We denote by $\mu$ its Beltrami coefficient.
The locally rectifiable conformal structure $\xi$ defines a new
Riemann surface structure on $\CC-K$. We assume that 
$(\CC-K , \xi )$ is a Riemann surface in the class $O_{AD}$.

There exists a continuous mapping $h: ({\overline {\CC}}, \xi )
\to ({\overline {\CC}}, \s_0)$ rectifying the conformal 
structure $\xi$, i.e. the restriction $h_{/\CC-K}$ is a 
homeomorphism that is quasi-conformal on compact subsets of 
$\CC-K$ and 
is a solution to the Beltrami equation
$$
\bar \partial h =\mu \ \partial h .
$$
Moreover $h$ is unique up to composition to the left by a 
Moebius transformation. Thus if $z_0$ and $z_1$ are 
two points in $\CC-K$, $h$ is uniquely determined 
by the normalization $h(z_0)=z_0$, $h(z_1)=z-1$ and $h(\infty ) =\infty$. 
If $K$ is totally disconnected then 
$h$ is a homeomorphism of the Riemann sphere. Otherwise $h$ 
collapses components of $K$ into points. 

Also if we truncate $\mu$ in an $\eps_n$-neighborhood
of $K\cup \{\infty \}$, $\mu_n =\mu . {\hbox {\bf 1}}_{\CC- V_{\eps_n}
(K\cup \{ \infty \})}$, and we consider the classical 
rectifications $(h_n)$ of $(\mu_n) $ normalized as $h$,
$h_n(z_0)=z_0$, $h_n(z_1)=z_1$ and $h_n(\infty ) =\infty$. 
Then $h_n \to h$
uniformly on the Riemann sphere.
}
\end{theorem}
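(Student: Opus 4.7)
My strategy is to approximate the degenerate Beltrami coefficient $\mu$ by a sequence of genuinely quasi-conformal coefficients $\mu_n$, solve the classical Beltrami equation for each, and use the $O_{AD}$ hypothesis to force the resulting sequence of rectifications to converge uniquely. Concretely, for $\eps_n \searrow 0$ I set $\mu_n = \mu \cdot \mathbf{1}_{\CC - V_{\eps_n}(K\cup\{\infty\})}$. The assumed local quasi-conformality of $\xi$ on $\CC - K$ gives $\|\mu_n\|_\infty < 1$ for each $n$, so Morrey--Ahlfors--Bers yields a unique quasi-conformal $h_n : \overline{\CC} \to \overline{\CC}$ with $\bar\partial h_n = \mu_n\,\partial h_n$, normalized by $h_n(z_0)=z_0$, $h_n(z_1)=z_1$, $h_n(\infty)=\infty$. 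On any compact $L \subset \CC - K$ one has $\mu_n|_L = \mu|_L$ for $n$ large, so the $h_n|_L$ are uniformly quasi-conformal, and classical compactness of QC maps extracts a subsequential locally uniform limit $h$ on $\CC - K$ which is quasi-conformal on compacta and satisfies $\bar\partial h = \mu\,\partial h$. This $h$ restricts to a homeomorphism of $\CC - K$ onto $\CC - K'$ for some compact $K' \subset \CC$.

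\textbf{Rigidity and uniqueness via $O_{AD}$.} The map $h$ realizes a biholomorphism $(\CC - K, \xi) \cong (\CC - K', \s_0)$, so $\CC - K'$ lies in $O_{AD}$. Theorem \ref{thm_OAD}(iii) then makes the embedding of $K'$ in $\CC$ rigid, yielding the desired uniqueness of $h$ up to left Möbius composition: any other subsequential limit $\tilde h$ produces a univalent map $\tilde h \circ h^{-1}$ between complements of two rigidly embedded sets, hence Möbius, and the three-point normalization forces it to be the identity. In particular the full sequence $h_n$ converges locally uniformly on $\CC - K$, and $h_n(K) \to K'$ in Hausdorff topology. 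Property (ii) of Theorem \ref{thm_OAD} further forces $K'$ to be totally disconnected: a non-trivial connected component $C' \subset K'$ would admit the Riemann map of $\overline{\CC} - C'$ onto the outside of a disk as a univalent function on $\CC - K'$ whose image omits a set of positive area, contradicting absolute measure zero.

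\textbf{Continuity, collapsing, and global convergence.} Each connected component $C$ of $K$ lies in arbitrarily small open neighborhoods separated from the rest of $K$, so it corresponds under $h|_{\CC - K}$ to a single topological end of $\CC - K'$, which, since $K'$ is totally disconnected, is a single point $p_C \in K'$; defining $h|_C \equiv p_C$ extends $h$ continuously to $\overline{\CC}$, yielding a continuous surjection of the sphere that is a homeomorphism when $K$ is totally disconnected and otherwise collapses each non-trivial component of $K$ to a point. Finally, global uniform convergence $h_n \to h$ on $\overline{\CC}$ follows by combining locally uniform convergence on $\CC - K$ with the Hausdorff convergence $h_n(K) \to K'$: outside any Hausdorff neighborhood of $K$ convergence is uniform, while inside such a neighborhood both $h_n$ (for $n$ large) and $h$ take values in a prescribed small neighborhood of $K'$, making the oscillation uniformly small. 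I expect the chief difficulty to be the argument that $K'$ is totally disconnected together with the precise identification of each component of $K$ with a single end of $(\CC - K, \xi)$; both depend essentially on the $O_{AD}$ rigidity supplied by Theorem \ref{thm_OAD}, while the rest is standard QC compactness.
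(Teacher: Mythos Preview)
Your approach is essentially the paper's: the paper first gives a direct existence argument (embed the abstract planar Riemann surface $(\CC-K,\xi)$ into $\overline{\CC}$), then re-derives existence together with the approximation statement via exactly your truncation scheme, QC compactness on compacta of $\CC-K$, and $O_{AD}$ rigidity (Theorem~\ref{thm_OAD}(iii)) for uniqueness of the limit. Your explicit argument that $K'$ is totally disconnected, using a Riemann map of the complement of a hypothetical nontrivial component to exhibit a univalent function on $\CC-K'$ whose image omits positive area, is a clean way to unpack what the paper simply asserts.

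One step is imprecise. For uniform convergence on all of $\overline{\CC}$, your claim that near $K$ ``both $h_n$ and $h$ take values in a prescribed small neighborhood of $K'$, making the oscillation uniformly small'' does not by itself bound $|h_n(z)-h(z)|$: a small neighborhood of $K'$ can have large diameter. You must argue component by component. The paper does this by taking an exhaustion $(U_i)$ of $\CC-K$, using total disconnectedness of $K_\infty$ to choose $i$ so that every bounded component of $\overline{\CC}-h(\overline{U_i})$ has diameter $<\eps/2$, and then using uniform convergence on $\overline{U_i}$ to conclude that for large $n$ each $h_n$ (being a global homeomorphism) sends a component of $\overline{\CC}-\overline{U_i}$ into a region of diameter $<\eps$ containing the corresponding $h$-image. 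With this refinement in place your outline is complete and matches the paper.
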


\bigskip

\begin{proof}

\medskip

Consider the Riemann surface $\cS =\CC-K$ endowed 
with the new conformal structure $\xi$. Local rectifications
of $\xi$ provide charts for the new complex structure on $\cS$.
Obviously for this new complex structure $\cS$ is still a 
planar Riemann surface (it is a topological property). 
Thus there is an embedding of 
 $\cS$ in $\CC$ (with 
the end corresponding to $\infty$ corresponding to $\infty$).
We obtain in this way a rectification mapping $h: \CC-K 
\to \CC -K_\infty$. The compact set $K_\infty$ is a totally
disconnected set 
set of absolute area zero.  So $h$ extends continuously 
into a map $h :{\overline {\CC}}\to {\overline {\CC}}$
mapping each connected component of $K$ into the corresponding
point of $K_\infty$. 

We prove the uniqueness. Let $h$ and $h'$ 
be two solutions satisfying the theorem. Then 
$h'\circ h^{-1} : \CC-h (K) \to \CC -h'(K)$ is a 
holomorphic diffeomorphism. Since $\CC-h(K)$ is 
a $O_{AD}$ Riemann surface, using theorem \ref{thm_OAD} 
condition (iii) we have that $h'\circ h^{-1}$
is a Mo\"ebius transformation as claimed.

It remains to prove the second part of the theorem about the 
approximation of $h$ by classical rectifications. We obtain 
also another proof of the existence of $h$.
Assume that $z_0$ and $z_1$ are two distinct points in 
$\CC-K$. Consider a sequence $(\eps_n)$, $\eps_n >0$, 
$\eps_n \to 0$, and consider the $\eps_n$-neighborhood 
(for the chordal metric) of $K\cup \{ \infty \}$, 
$V_{\eps_n} (K\cup \{ \infty \})$. We truncate $\mu$
near $K\cup \infty$. Let 
$\mu_n =\mu \ . {\hbox {\bf 1}}_{\CC-V_{\eps_n}(K\cup \{ \infty \})}$. 
The Beltrami form 
$\mu_n$ satisfies $||\mu_n ||_{L^\infty } <1$ and 
defines a classical quasi-conformal structure. 
Let $\xi_n$ be its 
associated complex structure.
Consider a sequence of classical Morrey-Ahlfors-Bers 
rectifications $h_n : ({\overline {\CC}} ,\xi_n) 
\to ({\overline {\CC}} , \s_0 )$ normalized such that 
$h_n (z_0)=z_0$, $h_n (z_1)=z_1$ and $h_n (\infty )=\infty$.
We prove that the sequence $(h_n)$ converges uniformly to the 
desired $h$.

Observe that the sequence $(h_n)$ is equicontinuous in  
$\CC-K$ because the $h_n$ are uniformly H\"older on compact 
subsets of $\CC-K$ because they are quasi-conformal (see \cite{[Le-Vi]} for classical results on quasi-conformal theory). 
Thus extracting a subsequence we can 
also assume that
$$
\lim_{k\to +\infty} h_{n_k} =h
$$
uniformly on compact subsets of $\CC - K$. 
Observe that by  the same classical proof in 
quasi-conformal theory (see \cite{[Le-Vi]}), 
the limit 
$h$ is a homeomorphism from $\CC-K$ into its image
and is a 
quasi-conformal homeomorphism on compact subsets of 
$\CC-K$. Moreover $h (\CC-K) =\CC -K_\infty$ is 
biholomorphic to $(\CC-K , \xi)$, thus it is a
Riemann surface in the class $O_{AD}$ and $K_\infty$ 
is totally disconnected of absolute area $0$. Thus 
the limit $h$ is unique and we have on $\CC-K$ 
$$
h=\lim_{n\to +\infty} h_n.
$$
Also if $C$ is a component of $K$ then $h_n (C)$ converges
to a point of $K_\infty$ (the point corresponding to 
the corresponding end of $\CC-K$.) Thus on $K$ the 
mappings $h_n$ converge pointwise to an extension of $h$
still denoted by $h :{\overline {\CC}}\to {\overline {\CC}}$.
The convergence is uniform on the Riemann sphere. Consider 
an exhaustion $(U_i)$ of $\CC-K$ with ${\overline {U_i}}$
a compact set in $\CC-K$. Let $\eps >0$ and choose $i$ large
enough so that the diameter of the components of the complement 
of $h({\overline {U_i}})$ have diameter less than $\eps/2$.
The sequence  $(h_n)$ converges uniformly on $U_i$ to $h$.
There is $N\geq 1$ so that for $n\geq N$, $||h-h_n||_{C^0 (U_i)}
\leq \eps /2$. Thus, for $n\geq N$,  the diameter of the image by $h_n$ of 
a component of the complement of ${\overline {U_i}}$ is less 
than $\eps$ and $||h-h_n||_{C^0 ({\overline {\CC}})}
\leq \eps $. Q.E.D.
\end{proof}

\subsubsection{Rectification theorem for thin Cantor sets.} \label{rectif_thin}

In the applications we need an effective version of this 
theorem. First we have to be able to check that the 
Riemann surface $(\CC-K , \xi )$ is in the class $O_{AD}$.
For this we can use Sario's criterium or McMullen's version of 
it. In practice $K$ is not an arbitrary compact set in the 
plane but a thin Cantor set with an associated tree of annulus.

Let $\xi$ be a conformal structure on $\O={\overline {\CC}}-K$
such that $\xi$ is quasi-conformal on compact subsets of 
$\CC-K$ (observe that we allow $\xi$ to degenerate in a 
neighborhood of $\infty$). Observe that we can consider
modulus of annulus relatively compact in 
$\CC-K$ with respect to $\xi$: The modulus
$$
\mod_\xi A
$$
is the modulus of $h(A)$ where $h$ is a quasi-conformal 
rectification of $\xi$ in a neighborhood of $A$. Observe
that this quantity is independent of the rectification chosen.
We denote $\mu$ be the Beltrami form associated to $\xi$
(defined to be $0$ on $K$ and at the point $\infty$, i.e. we consider 
there $\xi =\s_0$).

We assume the following fundamental hypothesis (we 
say that $\xi$ is {\it admissible}) :

\bigskip

{\bf (H)} {\it There exists a thin tree $\cT$ associated to $K$ 
such that $\cT$ is also thin with respect to the 
conformal structure $\xi$, i.e. for any infinite 
branch $(A_i)$ of $\cT$ deprived from the root we have 
$$
\sum_i \mod_\xi A_i =+\infty ,
$$
and 
$$
\mod_\xi A_0 =+\infty.
$$
}

\bigskip

Since $A_0$ is a pointed disk at $\infty$ and so it is not
relatively compact in $\CC-K$, $\mod_\xi A_0 =+\infty$ requires
an explanation. It means that 
$$
\lim_{\eps \to 0} \mod_\xi A_0-V_\eps (\infty ) =+\infty ,
$$
where $V_\eps (\infty )$ denotes an $\eps$ neighborhood of 
$\infty$ for the chordal metric.

Note that by McMullen's criterium the condition $(H)$ implies 
that $(\CC-K, \xi)$ is a Riemann surface in the class 
$O_{AD}$. Moreover since $K$ is a Cantor set, the rectification 
given by the previous theorem is a global homeomorphism of 
the Riemann sphere. Also, transporting the tree of annulus 
by $h$ and using McMullen's criterium, we have that $h(K)$ is a 
thin Cantor set. In conclusion we have:

\begin{theorem}\label{rectif2}{Let $\xi$ be an admissible conformal 
structure on $\CC-K$. We denote by $\mu$ its Beltrami 
form.

There exists a unique absolutely continuous 
almost everywhere differentiable 
homeomorphism rectifying the conformal structure $\xi$,
$h : ({\overline {\CC}}, \xi ) \to ({\overline {\CC}},
\s_0)$, i.e. solving the Beltrami equation
$$
\bar \partial h =\mu \ \partial h,
$$
with $h$ normalized such that $h(0)=0$, $h(1)=1$ and 
$h (\infty)=\infty$.

Moreover, $h$ is a quasi-conformal homeomorphism 
on compact subsets of $\CC -K$ and $h(K)$ is also 
a thin Cantor set.

Also if we truncate $\mu$ in an $\eps_n$ neighborhood
of $K\cup \{\infty \}$, $\mu_n =\mu . {\hbox {\bf 1}}_{\CC- V_{\eps_n}
(K\cup \{ \infty \})}$, and we consider the classical 
rectifications $(h_n)$ of $(\mu_n) $, then $h_n \to h$
uniformly on the Riemann sphere.
}
\end{theorem}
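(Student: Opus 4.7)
The plan is to derive Theorem \ref{rectif2} directly from Theorem \ref{rectif1}, so the main task is to check that hypothesis (H) implies that $(\CC-K,\xi)$ lies in the class $O_{AD}$. This is immediate from McMullen's geometric criterion (the theorem preceding Theorem \ref{thm_analytic_tree}): the tree $\cT$ given by (H) consists of annuli relatively compact in $\CC-K$ (except the root $A_0$ at $\infty$, which requires the interpretation of $\mod_\xi A_0 = +\infty$ as a limit over complements of chordal neighborhoods of $\infty$). On each non-root annulus, $\xi$ is quasi-conformal so $\mod_\xi A$ is well-defined via any local rectification; the hypothesis that the sum of these moduli diverges along every infinite branch is exactly what McMullen's test requires for $(\CC-K,\xi)$ to be $O_{AD}$. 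Theorem \ref{rectif1} then produces a continuous rectification $h : \overline{\CC}\to\overline{\CC}$ unique up to left-composition with a Möbius transformation, and since $K$ is totally disconnected the last sentence of Theorem \ref{rectif1} guarantees $h$ is a homeomorphism of the sphere. Normalizing by the three points $0,1,\infty$ (all in $\CC-K$ after translating if needed, or by using any three points of $\CC-K$ and post-composing with a Möbius map) pins $h$ down uniquely.

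Next I would show that $h(K)$ is a thin Cantor set. For this I transport the tree $\cT$ by $h$. On every annulus $A$ of $\cT$ (not the root), $h$ restricts to a $\xi$-conformal diffeomorphism onto its image $h(A)\subset \CC-h(K)$, so $\mod_{\s_0}h(A) = \mod_\xi A$. The tree $h(\cT)$ is thus a tree of annuli in $\CC-h(K)$ nested in the same combinatorial pattern as $\cT$, with the same moduli, and so it is a thin tree associated to $h(K)$ in the standard conformal structure. By definition $h(K)$ is a thin Cantor set, and McMullen's criterion re-applied (or equivalently the fact that $(\CC-h(K),\s_0)$ is biholomorphic to $(\CC-K,\xi)$) confirms $\CC-h(K)\in O_{AD}$ and in particular $h(K)$ has absolute measure zero.

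Uniqueness among absolutely continuous, almost everywhere differentiable rectifying homeomorphisms follows from the $O_{AD}$ property exactly as in the proof of Theorem \ref{rectif1}: any two such $h,h'$ give $h'\circ h^{-1}$ a holomorphic diffeomorphism of $\CC-h(K)\in O_{AD}$, hence Möbius, hence the identity under the normalization. Finally, the approximation statement for the truncated Beltrami forms $\mu_n = \mu\cdot \mathbf{1}_{\CC-V_{\eps_n}(K\cup\{\infty\})}$ is inherited from the corresponding statement in Theorem \ref{rectif1}, combined with the standard equicontinuity of the classical Morrey--Ahlfors--Bers rectifications $h_n$ on compact subsets of $\CC-K$ and the uniqueness of the limit.

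The main obstacle I expect is a conceptual rather than technical one: verifying that $\mod_\xi A$ behaves well at the root annulus $A_0$, where $\xi$ is only guaranteed quasi-conformal on compact subsets of $\CC-K$ (the point $\infty$ being a point of possible degeneration) and where McMullen's criterion must be applied in a limiting form. Once this is handled by the exhaustion $A_0\setminus V_\eps(\infty)$ built into hypothesis (H), the rest of the argument is essentially a book-keeping reduction to Theorem \ref{rectif1} and Theorem \ref{thm_analytic_tree}.
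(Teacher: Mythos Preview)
Your proposal is correct and follows essentially the same approach as the paper: the paragraph preceding the theorem statement derives it from Theorem \ref{rectif1} exactly as you do, by observing that hypothesis (H) together with McMullen's criterion forces $(\CC-K,\xi)\in O_{AD}$, that total disconnectedness of $K$ makes the rectification a homeomorphism, and that transporting the tree $\cT$ by $h$ and reapplying McMullen's criterion shows $h(K)$ is thin. Your additional care about the root annulus $A_0$ and the normalization is more explicit than the paper, but not a different argument.
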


\begin{definition}\textbf{(Generalized rectifications).}
{The solutions of the 
Beltrami equation provided by theorem \ref{rectif1} are  
called generalized rectifications.}
\end{definition}

A similar proof as for theorem \ref{rectif1} gives

\begin{theorem}{Let $(\mu_n)$ be a sequence
of Beltrami forms  
with $||\mu_n ||_{L^\infty (C)} \leq k(C) <1$ for 
compact subsets $C\in \CC-K$
such that 
$$
\lim_{n\to +\infty} \mu_n (z) =\mu (z)
$$
for almost every $z \in {\overline {\CC}}$, where
$\mu$ is a Beltrami form as in theorem \ref{rectif2}.
Let $(h_n)$ be the classical Morrey-Ahlfors-Bers 
rectifications normalized in the usual way, and $h$ 
the generalized rectification for $\mu$.
Then we have uniformly on the Riemann sphere 
$$
\lim_{n\to +\infty } h_n =h .
$$
}
\end{theorem}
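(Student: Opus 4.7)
The plan is to mimic the convergence argument used in Theorem \ref{rectif1}, replacing the truncation-of-$\mu$ family by the given sequence $(\mu_n)$, and then to invoke the uniqueness statement in Theorem \ref{rectif2} to identify the limit. First I would work on compact subsets $C \subset \CC - K$. Since $\|\mu_n\|_{L^\infty(C)} \le k(C) < 1$, the maps $h_n$ restricted to a fixed compact neighborhood $C$ are uniformly $K(C)$-quasiconformal (for some $K(C) < \infty$). By the classical equicontinuity/Hölder estimates of quasi-conformal theory (see \cite{[Le-Vi]}), combined with the three-point normalization $h_n(0)=0$, $h_n(1)=1$, $h_n(\infty)=\infty$, the family $(h_n)$ is normal on $\CC - K$, so from any subsequence we can extract a further subsequence $(h_{n_k})$ converging uniformly on compact subsets of $\CC - K$ to some continuous map $\tilde h$.

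Next I would identify $\tilde h$. Standard convergence theorems for quasi-conformal maps with converging Beltrami coefficients (almost-everywhere convergence plus uniform quasi-conformality on compacta) imply that $\tilde h$ is a homeomorphism onto its image on $\CC - K$, is quasi-conformal on compact subsets of $\CC - K$, and satisfies $\bar\partial \tilde h = \mu\, \partial \tilde h$ there. Since $\mu$ satisfies hypothesis $(H)$, the Riemann surface $(\CC - K, \xi)$ is $O_{AD}$, and transporting the thin tree $\cT$ associated to $K$ by $\tilde h$ shows that $\tilde h(K)$ is a thin Cantor set. Therefore $\tilde h$ is a generalized rectification of $\mu$ with the prescribed normalization. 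By the uniqueness part of Theorem \ref{rectif2}, $\tilde h = h$. Since every subsequence of $(h_n)$ has a sub-subsequence converging to $h$, the whole sequence $h_n$ converges to $h$ uniformly on compact subsets of $\CC - K$.

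To upgrade this to uniform convergence on the whole Riemann sphere, I would reproduce verbatim the last paragraph of the proof of Theorem \ref{rectif1}. Choose an exhaustion $(U_i)$ of $\CC - K$ by relatively compact open sets. Given $\varepsilon > 0$, using the thinness of the tree $\cT$ and McMullen's geometric criterion applied to $h(\cT)$, one can pick $i$ large enough that every connected component of $\overline{\CC} - h(\overline{U_i})$ has diameter (in the chordal metric) less than $\varepsilon/2$. For any fixed $i$ the sequence $h_n|_{\overline{U_i}}$ converges uniformly to $h$, so for $n$ sufficiently large the images $h_n(\overline{U_i})$ are within $\varepsilon/2$ of $h(\overline{U_i})$; in particular each component of $\overline{\CC} - h_n(\overline{U_i})$ has diameter at most $\varepsilon$. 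Since $h_n$ is a homeomorphism of the sphere sending components of $\overline{\CC} - \overline{U_i}$ to components of $\overline{\CC} - h_n(\overline{U_i})$, this forces $\|h_n - h\|_{C^0(\overline{\CC})} \le \varepsilon$ for large $n$.

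The main obstacle I expect is Step 2, namely showing that the limit $\tilde h$ is genuinely a homeomorphism from $\CC - K$ onto the complement of a thin Cantor set (rather than, say, degenerating a component of $\CC-K$). This is where the uniform-quasiconformality-on-compacta hypothesis $\|\mu_n\|_{L^\infty(C)} \le k(C) < 1$ is essential, because it guarantees the usual normal-family arguments produce a nondegenerate qc limit on compacta, and thus allows one to transport the thin tree by $\tilde h$ and invoke Theorem \ref{thm_analytic_tree}/McMullen's criterion to place the limit in the $O_{AD}$ setting where uniqueness applies. Once $\tilde h$ has been identified with $h$, the passage to uniform convergence across $K$ is purely a diameter-control argument and poses no new analytic difficulty.
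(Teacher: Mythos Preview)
Your proposal is correct and follows exactly the approach the paper indicates: the paper's own proof of this statement is a single sentence --- ``A similar proof as for theorem \ref{rectif1} gives'' --- and you have faithfully reconstructed what that similar proof is (equicontinuity from the uniform quasi-conformality bound on compacta, extraction of a convergent subsequence, identification of the limit via the $O_{AD}$ uniqueness of Theorem \ref{rectif2}, and the exhaustion/diameter argument to pass to uniform convergence on the sphere). The only additional ingredient compared to the truncation case in Theorem \ref{rectif1} is that a.e.\ convergence $\mu_n\to\mu$ together with the uniform bound $\|\mu_n\|_{L^\infty(C)}\le k(C)<1$ forces the limit $\tilde h$ to solve $\bar\partial\tilde h=\mu\,\partial\tilde h$ on $\CC-K$, and you correctly cite the standard convergence theorem from \cite{[Le-Vi]} for this.
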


\subsection{ Rectification of potential conformal structures.}\label{rectif_pot}

The "effective" version of the rectification theorem in the 
previous section are more explicit for potential 
conformal structures that are allowed to be even not locally 
quasi-conformal.

From now on we fix $K$ to be a totally thin Cantor
set with all points regular for the Dirichlet problem.

Let $\xi$ be a potential conformal. To each annulus $A$ in 
the analytic tree of $K$, there corresponds by
$E^{-1} \circ \varphi_K$  a finite number of 
rectangles $(R_j(A))$
with horizontal and vertical sides in 
the log-B\"ottcher upper half plane  (the vertical 
sides are contained in the preimage of the 
skeleton $E^{-1} (\G' (K) )$).
There are  natural identification of the lateral sides 
of the preimage of the skeleton 
$ E^{-1} (\G' (K) )$ defined by $\varphi_K$.
Pasting in the prescribed way the vertical boundaries of 
the rectangles $(R_j (A))$ we recover the annulus $A$, and 
we observe that t is easy to 
check that i
$$
\mod \  (A) =\sum_j R_j (A).
$$

We define the modulus of $A$ with respect to $\xi$
by
$$
\mod_{\xi} A =\sum_j \mod_{\xi'} R_j (A) =
\sum_j \mod L (R_j (A)).
$$t is easy to 
check that i
Observe that when $\xi$ is quasi-conformal with 
respect to $\s_0$ in a neighborhood of $A$ this 
definition coincides with the one given before 
(observe that $L$ defines a local rectification 
of each $R_j (A)$ that respects the vertical gluing).

\begin{definition}\textbf {(Admissible potential conformal 
structures).} A potential conformal structure $\xi$ 
is admissible if the analytic tree $\cT_\o (K)$ is 
thin for the conformal structure $\xi$. 
This means that for any infinite branch $(A_i)$ deprived from 
the root
$$
\sum_i \mod_\xi A_i =+\infty
$$
(note that $\mod_\xi A_0 =+\infty$ is automatic).
\end{definition}

\bigskip

{\bf Observation.}

We remind that an admissible potential conformal structure
is not necessarily admissible in the sense of the definition
in theorem \ref{rectif2} because it can be 
non-quasi-conformal on compact subsets of $\CC-K$.
Thus we allow even more degenerate conformal structures.
On the other hand the ellipse field defining $\xi$ is 
of a very special nature: All ellipses have principal
axes tangent or orthogonal to equipotentials.

\bigskip

Note that a potential conformal structure $\xi$  
has a well defined Beltrami form $\mu$ because
$L$ is almost everywhere differentiable.

\begin{theorem}{

Let $\xi$ be an  
admissible potential conformal structure with Beltrami form
$\mu$.

There exists a unique absolutely continuous 
almost everywhere differentiable 
homeomorphism rectifying the conformal structure $\xi$,
$h : ({\overline {\CC}}, \xi ) \to ({\overline {\CC}},
\s_0)$, i.e. solving the Beltrami equation
$$
\bar \partial h =\mu \ \partial h,
$$
with $h$ normalized such that $h(0)=0$, $h(1)=1$ and 
$h (\infty)=\infty$.

Moreover, $h(K)$ is a totally thin Cantor set.
Also if $\xi_n$ are potential conformal structures with 
corresponding $(d_n)$ and $(k_n)$ lipschitz maps 
(so that $\xi_n$ is quasi-conformal) such that $d_n \to d$
and $k_n \to k$ uniformly, then uniformly on ${\overline {\CC}}$,
$$
\lim_{n\to +\infty} h_n =h
$$
where the $(h_n)$ are the classical rectifications normalized
as usual.

The rectification $h$ is called the generalized rectification
of $\xi$. 
}
\end{theorem}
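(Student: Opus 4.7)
The plan is to reduce the statement to the rectification Theorem \ref{rectif2} for admissible conformal structures that are quasi-conformal on compact subsets, via a two-step approximation, and then extract the limit through the potential-theoretic compactness from Section \ref{potential}.

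\medskip

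\textbf{Step 1.} Approximate the defining pair $(d,k)$ of $\xi$ uniformly on their domains by pairs $(d_n,k_n)$ of bi-lipschitz homeomorphisms, and let $\xi_n$ be the corresponding potential conformal structure with $L_n(x,y)=(d_n(x),k_n(y))$. By the remark following the definition of a potential conformal structure, each $\xi_n$ is locally quasi-conformal on compact subsets of ${\overline{\CC}}-K$. The modulus of an annulus $A$ of $\cT_\o(K)$ with respect to $\xi_n$ equals $\sum_j \mod L_n(R_j(A))$ and converges to $\mod_\xi A$ by uniform convergence of $L_n$ to $L$; choosing the approximation so that $\mod_{\xi_n}A_i\ge \tfrac12\mod_\xi A_i$ along any infinite branch, the analytic tree $\cT_\o(K)$ remains thin for each $\xi_n$. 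Theorem \ref{rectif2} then furnishes unique normalized homeomorphisms $h_n:{\overline{\CC}}\to{\overline{\CC}}$ rectifying $\xi_n$, with $h_n(0)=0$, $h_n(1)=1$, $h_n(\infty)=\infty$, and with $K_n:=h_n(K)$ again a thin Cantor set.

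\medskip

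\textbf{Step 2.} By the theorem describing the rectification of a quasi-conformal potential conformal structure, after adjusting $h_n$ by a multiplicative normalization absorbed into the final three-point normalization one has
\[
\varphi_{K_n}\circ h_n=\varphi_K\circ E^{-1}\circ L_n\circ E \qquad\text{on }{\overline{\CC}}-(K\cup\G(K)).
\]
On each compact subset $C\subset\CC-K$ the dilatations of the $h_n$ are uniformly bounded (the bound depending only on the bi-lipschitz constants of $L_n$ on the horizontal strip over $\varphi_K(C)$), so the family $(h_n)$ is equicontinuous, hence precompact. Extracting a subsequence, $h_{n_k}\to h$ locally uniformly on $\CC-K$ and $K_{n_k}\to K_\infty$ in the Hausdorff topology. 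The admissibility of $\xi$ forces $\Capa(K_n)\ge\e_0>0$ (since the $\varphi_{K_n}$ are obtained from $\varphi_K$ by the controlled distortion $L_n\circ E$) and forces every point of $K_\infty$ to be regular (no branch of the analytic tree collapses to a degenerate piece, by thinness). The Proposition in Section \ref{potential} then yields $\varphi_{K_{n_k}}\to\varphi_{K_\infty}$ uniformly on compact subsets of ${\overline{\CC}}-K_\infty$, and inverting the displayed formula gives
\[
h=\varphi_{K_\infty}^{-1}\circ E^{-1}\circ L\circ E\circ\varphi_K
\]
on $\CC-(K\cup\G(K))$, extended across $\G(K)$ by the matching gluing on its two sides. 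Because components of $K$ are points and each $K_n$ is a Cantor set converging to $K_\infty$, the $h_{n_k}$ extend continuously across $K$ by mapping each point to its corresponding point of $K_\infty$; a standard exhaustion argument (as in the final paragraph of the proof of Theorem \ref{rectif1}) promotes convergence to be uniform on the whole sphere.

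\medskip

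\textbf{Step 3.} For uniqueness and independence of the extracted subsequence: given a second normalized solution $h'$, the map $h'\circ h^{-1}$ is a holomorphic diffeomorphism from $\CC-h(K)$ onto $\CC-h'(K)$; since $h(K)$ is a thin Cantor set, the Riemann surface $\CC-h(K)$ lies in $O_{AD}$, so by Theorem \ref{thm_OAD}(iii) the map is a Möbius transformation, forced to be the identity by the three-point normalization at $0,1,\infty$. Thus the full sequence converges to $h$, which is the unique generalized rectification and satisfies $\bar\partial h=\mu\,\partial h$ almost everywhere by the corresponding identities for the $h_n$ and dominated convergence of the distributional derivatives away from $K$.

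\medskip

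The main obstacle is the control in Step 2 of the potential theory along the approximation: proving the uniform lower bound $\Capa(K_n)\ge\e_0$ and the regularity of every point of the Hausdorff limit $K_\infty$ (so that $K_\infty^*=K_\infty$). Both are consequences of the admissibility hypothesis on $\xi$---this is the combinatorial assertion that no infinite branch of $\cT_\o(K)$ degenerates in the limit---and it is precisely here that the hypothesis is used in an essential way, since without it the limit could collapse to a set of zero capacity and uniqueness would fail.
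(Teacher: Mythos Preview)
Your overall strategy---approximate $(d,k)$ by Lipschitz pairs $(d_n,k_n)$, rectify, and pass to the limit---matches the paper's. But there is a genuine gap in Step~2.

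You claim that on each compact $C\subset\CC-K$ the dilatations of the $h_n$ are uniformly bounded, ``the bound depending only on the bi-lipschitz constants of $L_n$ on the horizontal strip over $\varphi_K(C)$''. These bi-lipschitz constants are \emph{not} uniform in $n$. The limits $d$ and $k$ are only absolutely continuous homeomorphisms (this is the whole point of the theorem---cf.\ the Observation preceding the statement, and the remark that $\xi$ is locally quasi-conformal iff $d,k$ are locally Lipschitz). Hence $d'$ and $k'$ are in general unbounded, any Lipschitz approximants $d_n,k_n$ have bi-lipschitz constants tending to infinity, and the $h_n$ are \emph{not} uniformly quasi-conformal on any compact subset of $\CC-K$. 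Your equicontinuity/precompactness argument therefore fails, and with it your mechanism for extracting a limit $h$.

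The paper avoids equicontinuity entirely. Its key observation is that the analytic tree of any Hausdorff accumulation point $K_\infty$ of the sequence $(h_{\xi_n}(K))$ is \emph{a priori determined by $(d,k)$}: the modular invariants are read off from $d$ and $k$, the angular invariants from $d$ alone. Since an analytic tree together with the normalization determines the compact set (Theorem~\ref{thm_analytic_tree} and Theorem~\ref{thm_OAD}), $K_\infty$ is unique without any subsequence argument. Convergence $h_{\xi_n}\to h$ then follows from the explicit relation $\varphi_{K_n}\circ h_{\xi_n}=E\circ L_n\circ E^{-1}\circ\varphi_K$, the potential-theoretic convergence $\varphi_{K_n}\to\varphi_{K_\infty}$ (Proposition in Section~\ref{potential}), and $L_n\to L$; no compactness of the maps $h_n$ themselves is ever invoked. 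Your formula for $h$ in Step~2 is correct, and your proof can be repaired by discarding the equicontinuity claim and arguing directly, as the paper does, that the analytic tree of the Hausdorff limit is forced by $(d,k)$.
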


\begin{proof} {The proof follows the same lines as the one of theorem \ref{rectif1}.
We consider sequences $(d_n)$ and $(k_n)$ 
of Lipschitz homeomorphisms
$d_n : \TT \to \TT$ and $k_n : \RR_+ \to \RR_+$,  that 
converge uniformly to $d$ and $k$ respectively. 
We consider the corresponding 
conformal structures 
$$
\xi_n = (\varphi_K^{-1} \circ E)_* \ L_n^* \ \s_0 \ ,
$$
where $L_n (x,y)=(d_n (x) , k_n (y))$.

The conformal structures $\xi_n$ are quasi-conformal
with respect to $\s_0$. As in theorem \ref{rectif1} 
we prove that the sequence 
of normalized rectifications $h_{\xi_n} : 
({\overline {\CC}}, \xi_n ) \to ({\overline {\CC}},
\s_0)$ converges to a unique mapping $h$ that 
satisfies the conditions of the theorem. We have a 
similar proof. First the sequence of $(h_{\xi_n} (K))$ 
is uniformly bounded. 

This sequence has a limit $K_\infty$ in 
Hausdorff topology because any limit compact is a totally 
thin Cantor set and its analytic tree is determined by $\xi$,
(more precisely $d$ and $k$ determine the modular invariants
and $d$ alone determines the angular invariants) 
and the rectifications are
normalized.

Now, on the exterior of $K$ every point is determined
by its potential $G_K (z)$ and its set of external angles 
$\t_K (z)$ (in general there is only one external angle 
but it can have more 
than one when it belongs to a critical 
external ray). We observe that the Green map of 
$h_{\xi_n} (K)$ converges uniformly on compact sets
to the Green map of $K_\infty$. 
The rectifications $h_{\xi_n}$ transport
equipotentials (resp. external rays) of $K$ into 
equipotentials (resp. external rays) of $h_{\xi_n} (K)$
according to the map $k_n$ (resp. $d_n$). Moreover the 
Green function of $h_{\xi_n} (K)$ converges uniformly 
on compact sets of $\CC -K$ to the 
Green function of $K_\infty$. It follows that $h_{\xi_n}$
converges uniformly on compact sets of $\CC-K$ to a
map $h$ that maps a point $z\in \CC -K$ to the 
point $h(z)\in \CC-K_\infty$ determined by 
\begin{align*}
G_{K_\infty} (h(z))=k (G_K (z)) ,\\
\t_{K_\infty} (h(z))=d (\t_K (z)) , \\
\end{align*}
The map $h$ extends continuously to the Cantor set $K$
and $h(K) =K_\infty$.

We finally observe that on the complement  of $K$ and 
$h(K)$, the map $h$ is an absolutely continuous 
almost everywhere differentiable homeomorphism
(this map is just $L$ expressed in the B\"ottcher 
coordinates of the domain and the range).

Again the uniform convergence on $K$ is obtained from 
the convergence of finite parts of the analytic tree 
to the analytic tree of the limit Cantor set.}
\end{proof}

We observe that in the precedent proof we didn't 
make essential use of the fact that the limit mapping
on external angles $d$ was a homeomorphism. It was 
only used to define the modulus of annulus with 
respect to $\xi$ in order to identify the analytic 
tree of the limit. This is the key point observation 
before going to the most general rectification theorem.

\subsection { Rectification of virtual conformal structures.} \label{rectif_virt}

We consider a totally thin Cantor set $K$ and a 
virtual conformal structure $\xi$ on 
${\overline {\CC}}$ given by $L$ (or $(d,k)$). We denote by $\xi'$ the corresponding 
conformal structure on $\HH /\ZZ$ defined 
by the same $L$. 
The Cantor set $K$ is the ideal boundary of the 
Riemann surface ${\overline {\CC}}-K$. The measure 
$\mu_d$ being non-atomic and the critical external 
rays being countable, there is a natural way of 
projecting $\mu_d$ into a non-atomic probability 
measure of the ideal boundary. The  support of this
projected measure is a Cantor set $K_0\subset K$.

Let $R$ be a rectangle in the log-B\"ottcher upper 
half cylinder $\HH /\ZZ$ having vertical and horizontal
sides.
It is then natural to define 
by analogy with a classical complex structure
$$
\mod_{\xi '} R =\mod L (R) ,
$$
and 
$$
\mod_\xi A =\sum_j \mod_{\xi '} R_j (A) ,
$$
for an annulus $A$ in the analytic tree of $K$. 
Note that this definition is compatible with the previous 
ones when the virtual conformal structure is a classical 
conformal structure. Observe also that if $J_0(A)\subset \TT$ denotes 
the set of external angles whose corresponding external 
ray intersects $A$ and if $J_1 (A) \subset \RR_+$ is the interval 
of equipotential values intersecting $A$, then 
$$
\mod_\xi A =\frac{|J_0(A)|}{ \mu_d (J_0(A))} \frac{ |k (J_1 (A))|}{|J_1 (A)|} \mod A ,
$$
where $\mu_d$ is the non-atomic probability measure 
corresponding to $d\in \hat {\cC}_0 (\TT )$, 
and $|J(A)|$ denotes
the Lebesgue (linear) measure of $J(A)$. Thus 
we observe that if $\supp \mu_d \not= \TT$ (i.e. $d$ 
is not a homeomorphism) then we may 
have for some annulus $\mod_\xi A =+\infty$. This is 
a new feature that did not happen for non virtual 
conformal structures. Also from the above formula it 
follows that we cannot have $\mod_\xi A =+\infty$ for 
all annulus $A$ at a given depth. Otherwise by additivity 
of $\mu_d$ it would follow $\mu_d (\TT) =0$ contradicting 
that $\mu_d$ is a probability measure.

\begin{definition} {A virtual conformal 
structure $\xi$ is admissible if 
for each infinite 
branch $(A_i)$ deprived from the root  of the 
analytic tree $\cT _\o (K)$ we have 
$$
\sum_i \mod_\xi A_i =+\infty.
$$
}
\end{definition}

We have the following rectification theorem.

\begin{theorem}\label{rectif3}{Let $\xi$ be a virtual 
conformal structure in ${\overline {\CC}}-K$ 
(where $K$ is a totally thin Cantor set as above) 
defined by 
$d\in \hat {\cC}_0 (\TT)$ and $k\in {\Homeo}_+ 
(\RR_+ )$.
We assume that $\xi$ is admissible and that $0,1 \in K_0$ 
where $K_0$ is the support of the projection of $\mu_d$ on 
$K$.

There is a unique  
map correspondence $h\in \hat {\cC}_0 (
{\overline {\CC}}$ which 
rectifies $\xi$ such that $h(0)=0$, $h(1)=1$ and $h(\infty )=
\infty$. 

More precisely, the subset $K_\infty$ 
of regular (or non-isolated) points of  
$h(K)$ is a totally thin Cantor set, $K_\infty =h(K_0)$
and on the 
complement of $E^{-1} (\G'(K))$,
$$
 h\circ \varphi_K^{-1} \circ E  = 
\varphi_{h(K_0)}^{-1} \circ E\circ L.
$$

If there is no annulus $A$ in the analytic tree that 
degenerates (that is $\mod_\xi A < +\infty$, i.e. the 
support of $\mu_d$ is $\TT$ so $K_0=K$), then 
$h$ is a homeomorphism and $h(K)=K_\infty$.

Moreover, if $(\xi_n)$ is a sequence of 
potential conformal 
structures with 
associate homeomorphisms $(d_n)$ and $k_n=k$ with $d_n \to d$
uniformly on $\TT$,
then uniformly on the Riemann sphere 
$$
\lim_{n\to +\infty } h_{\xi_n } =h,
$$
where the rectifications have been properly normalized
composing by a suitable Mo\"ebius transformation.
}
\end{theorem}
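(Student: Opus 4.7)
The plan is to realize $h$ as the uniform limit of the generalized rectifications $h_n = h_{\xi_n}$ provided by the rectification theorem for potential conformal structures, where $\xi_n$ is a carefully chosen sequence of admissible potential conformal structures approximating $\xi$. This is the natural extension of the approximation strategy used in the previous two rectification theorems.

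First I would build the approximating sequence. Using the homeomorphism $\cM(\TT) \approx \cC_0(\TT)$ given in section \ref{conf_struct}, the non-atomic probability measure $\mu_d$ can be weakly approximated by Lebesgue-push-forwards of Lipschitz orientation preserving homeomorphisms $d_n: \TT \to \TT$, so that $d_n \to d$ uniformly in $\hat{\cC}_0(\TT)$; by a diagonal construction I may also approximate $k$ by Lipschitz increasing homeomorphisms $k_n$ with $k_n \to k$ uniformly (or simply take $k_n = k$ after a preliminary mollification). The pairs $(d_n, k_n)$ define potential conformal structures $\xi_n$ with Beltrami forms $\mu_n$. Using the explicit formula $\mod_\xi A = (|J_0(A)| / \mu_d(J_0(A))) \cdot (|k(J_1(A))|/|J_1(A)|) \mod A$ given before the theorem, the admissibility hypothesis $\sum_i \mod_\xi A_i = +\infty$ along each infinite branch of $\cT_\o(K)$ passes to $\xi_n$ for $n$ sufficiently large (after replacing $d_n$ by a mollification whose push-forward measure is close enough to $\mu_d$ on the relevant arcs $J_0(A_i)$). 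Thus each $\xi_n$ is admissible in the sense of the previous theorem and yields a unique generalized rectification $h_n : \overline{\CC} \to \overline{\CC}$ normalized by $h_n(0)=0$, $h_n(1)=1$, $h_n(\infty)=\infty$.

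Next I would extract the limit. By the previous theorem $h_n(K)$ is a totally thin Cantor set whose analytic tree has modular invariants rescaled by $k_n$ and angular invariants transported by $d_n$. Since $d_n \to d$ and $k_n \to k$ uniformly, these invariants converge, and by the reconstruction theorem \ref{thm_analytic_tree} together with the three-point normalization, the sets $h_n(K)$ converge in Hausdorff topology to a unique compact set whose regular part $K_\infty$ is totally thin. By the proposition on upper semi-continuity of potentials and the lemma on potential comparison in section \ref{potential}, the Green functions $G_{h_n(K)}$ converge uniformly on compact subsets of $\CC \setminus K_\infty$ to $G_{K_\infty}$, and hence the Green maps $\varphi_{h_n(K)}$ converge in the Carathéodory kernel sense. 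Since the $h_n$ are explicitly described on $\overline{\CC} \setminus (K \cup \G(K))$ by $h_n = \varphi_{h_n(K)}^{-1} \circ E \circ L_n \circ E^{-1} \circ \varphi_K$, passing to the limit gives uniform convergence on compact subsets of $\overline{\CC} \setminus (K \cup \G(K))$ to the map $h = \varphi_{K_\infty}^{-1} \circ E \circ L \circ E^{-1} \circ \varphi_K$. This is precisely the identity $h \circ \varphi_K^{-1} \circ E = \varphi_{h(K_0)}^{-1} \circ E \circ L$ stated in the theorem, with $K_\infty = h(K_0)$.

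The main obstacle, and the genuinely new feature compared with the potential case, is that when $d$ collapses an interval $I \subset \TT$ to a point, the family of external rays of $K$ with angle in $I$ is collapsed by $h$ onto a single external ray of $K_\infty$, so $h$ fails to be injective and only lives in $\hat{\cC}_0(\overline{\CC})$. To handle this rigorously I would show that the graphs $\cG(h_n) \subset \overline{\CC} \times \overline{\CC}$ converge in Hausdorff topology to a closed connected set, which is forced by the continuity of $h$ on $\overline{\CC} \setminus (K \cup \G(K))$, the uniform convergence of $h_n$ there, and equicontinuity across the skeleton (since $d_n$ is uniformly close to $d$ and $d$ is a map correspondence). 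The hypothesis $0, 1 \in K_0$ guarantees that the normalization points are not swallowed in a collapsed interval. Uniqueness is then immediate: given two such solutions $h, h'$, the composition $h' \circ h^{-1}$ restricted to $\overline{\CC} \setminus K_\infty$ is a holomorphic diffeomorphism onto $\overline{\CC} \setminus K_\infty'$ fixing $\infty$, and since $\overline{\CC} \setminus K_\infty$ is $O_{AD}$ (being totally thin), theorem \ref{thm_OAD}(iii) forces it to be a Möbius transformation, and the three-point normalization forces it to be the identity. When $\supp \mu_d = \TT$ no interval collapses, so $K_0 = K$ and $h$ is globally injective, yielding the homeomorphism case. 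The uniform convergence statement for arbitrary potential approximations with $d_n \to d$ follows by the same equicontinuity argument together with the uniqueness just proved.
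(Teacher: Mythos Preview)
Your approach is essentially that of the paper: approximate $(d,k)$ by Lipschitz pairs $(d_n,k_n)$, use the explicit Green-map formula $h_n=\varphi_{K_n}^{-1}\circ E\circ L_n\circ E^{-1}\circ\varphi_K$, pass to the limit via convergence of potentials, and identify the analytic tree of the limit Cantor set. Two points deserve correction.

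First, the admissibility check for $\xi_n$ is superfluous. Since $d_n,k_n$ are Lipschitz, $\xi_n$ is globally quasi-conformal and the rectification $h_n$ is just the classical Morrey--Ahlfors--Bers homeomorphism; no thinness hypothesis on the $\xi_n$-tree is needed at that stage. The paper proceeds exactly this way.

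Second, and more seriously, your $O_{AD}$ uniqueness argument does not work as written. When $d$ collapses intervals, $h$ is not injective, so the composition ``$h'\circ h^{-1}$'' is not a well-defined map from $\overline{\CC}\setminus K_\infty$ to $\overline{\CC}\setminus K'_\infty$; you would first have to know that $h$ and $h'$ share the same fibers, which already presupposes most of what you want. The paper obtains uniqueness differently: it constructs explicitly, by the ``combinatorial collapsing'' procedure of section~\ref{collapsing}, a tree $\tau_\o^\xi(K)$ depending only on $K$ and $(d,k)$, and checks that the analytic tree of any subsequential Hausdorff limit $K_\infty$ of $(h_n(K))$ coincides with $\tau_\o^\xi(K)$. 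Since a totally thin Cantor set is determined up to M\"obius by its analytic tree (Theorem~\ref{thm_analytic_tree}), the normalization pins down $K_\infty$, hence $h$ via the Green-map formula, and no subsequence extraction is needed. You allude to this mechanism in your convergence step; it is the correct source of uniqueness, and you should drop the $h'\circ h^{-1}$ argument in its favor.
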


Before going into the proof, we have to study the 
limit Cantor set $K_\infty$ and more precisely how its
analytic tree is related with the one of $K$.

\subsubsection {Combinatorial collapsing.} \label{collapsing}

We consider the above situation. The virtual conformal 
structure $\xi$ is determined by $(d,k)$. 
We construct the analytic tree $\tau_\o^\xi (K)$ of $K$ 
with respect to $\xi$ from $\tau_\o (K)$ as follows.

We first delete from $\tau_\o (K)$ all branches after 
a vertex $A$ such that $\mod_\xi A =+\infty$. Given 
a vertex $B$, we denote by $\a_+^{(1)} (B), 
\a_+^{(2)}(B) \in \TT$ the two external 
angles of the critical point in the outer boundary of $B$,
and $\a_-^{(1)}(B), \a_-^{(2)}(B) \in \TT$ the 
two angles of the critical point in the inner boundary.
We have that 
$$
\left \{ \frac{\a_+^{(1)} -\a_-^{(1)}}{|\a_+^{(1)} -\a_+^{(2)} |}, 
\frac{\a_+^{(1)} -\a_-^{(2)}}{|\a_+^{(1)} -\a_+^{(2)}| } \right \}
$$
is the angular invariant of $B$ (the expression
$|\a_+^{(1)} -\a_+^{(2)}|$ is the harmonic measure of 
the annulus $B$). ic
Observe that a vertex $A$ satisfies $\mod_\xi A =+\infty$
if and only if 
$$
d (\a_+^{(1)} (A) )=d (\a_+^{(2)} (A) ),
$$
or equivalently, if and only if for its parent $B$ we have 
$$
d (\a_-^{(1)} (B))=d (\a_-^{(2)} (B) ).
$$ 

Next we remove 
all vertices in the remaining tree that have only one children.
The new branches of the tree can be composed by several 
old branches. We change the modular invariant of the father 
vertex (denote it by $A_1$) by the sum of 
its old modular invariant with the 
modular invariants of the deleted old children (denote them
by $A_2 , \ldots , A_n$).
So
$$
M^\xi (A_1) =\sum_{i=1}^n \mod A_i.
$$
 
Its new angular invariants will 
be 
\begin{align*}
\t_1^\xi (A_1) &=\frac{1}{ |d (\a_+^{(1)} (A_1))-d (\a_+^{(2)} (A_1))|}
\sum_{i=1}^{n} d (\a_+^{(1)} (A_i )) -d (\a_-^{(1)} (A_i)) \\
&=\frac{d(\a_+^{(1)} (A_1))-d(\a_+^{(1)} (A_n))}
{|d (\a_+^{(1)} (A_1))-d (\a_+^{(2)} (A_1))|} \\
\t_2^\xi (A_1) &=\frac{1}{|d (\a_+^{(1)} (A_1))-d (\a_+^{(2)} (A_1))|}
\sum_{i=1}^n d (\a_+^{(1)} (A_i )) 
-d (\a_-^{(2)} (A_i)) \\
&=\frac{d(\a_+^{(1)} (A_1))-d(\a_+^{(1)} (A_n))}
{|d (\a_+^{(1)} (A_1))-d (\a_+^{(2)} (A_1))|} \\
\t^\xi (A_1) &= \{ \t_1^\xi (A_1), \t_2^\xi (A_2) \} \\
\end{align*}

We denote by $(\hat A_i)$ the final tree. It is a binary tree (we cannot remove all vertices 
of an infinite branch of the original tree because the 
measure $\mu_d$ has no atoms). The new analytic tree is
$$
\tau^\xi_\o (K) =\left ( \hat A_i , M^\xi (\hat A_i ), 
\t^\xi (\hat A_i )\right ).
$$

\textbf {Proof of Theorem \ref{rectif3}.}

Consider a sequence of classical 
conformal structures
$(\xi_n )$ quasi-conformal with respect to 
$\s_0$ (just take $d_n$ and $k_n$ to be lipschitz)
as in the Theorem.

Observe that on $E^{-1} (\G' (K))$ we 
have,
$$
 h_{\xi_n} \circ \varphi_K^{-1} \circ E  = 
\varphi_{h_{\xi_n} (K)}^{-1} \circ E\circ L_n.
$$
As before, the mappings $h_{\xi_n}$ being normalized,
the sequence of compacts sets $(K_n)$, $K_n =h_{\xi_n} (K)$, is 
uniformly bounded. We can extract converging sub-sequences
\begin{align*}
\lim_{k\to +\infty} K_{n_k} &=K'_\infty ,\\
\lim_{k\to +\infty} \varphi_{K_{n_k}} &= \varphi_\infty ,\\
\lim_{k\to +\infty} \G (K_{n_k}) &= \G_\infty (K).\\
\end{align*}

The map $\varphi_\infty^{-1}$ is holomorphic near $\infty$ and 
is defined in the kernel of the domain of definition of the 
$(\varphi_{K_n}^{-1})$,
$$
\varphi_\infty^{-1} : {\overline {\CC}}-E\circ L\circ E^{-1}
(\G'(K)) \to 
{\overline {\CC}}- (K'_\infty \cup \G_\infty (K)).
$$

When there is collapsing (i.e. when $d$ is not a homeomorphism)
the limit $K'_\infty$ contains non-regular points. More
precisely, to each collapsing annulus $A$ there corresponds 
an isolated point of $K'_\infty$ which the limit of all points of 
$K$ enclosed by $A$. This point if the image of a tip of 
a segment of $E\circ L\circ E^{-1} (\G'(K))$. The map 
$\varphi_\infty^{-1}$ extends holomorphically at this point
in a univalent way. We denote by $K_\infty$ the subset of 
regular points of $K'_\infty$ (i.e. $K'_\infty$ deprived of 
its isolated points). The extension 
of $\varphi_\infty$ constructed in this way is the Green map
of $K_\infty$. It is not difficult to see that 
the analytic tree of $K_\infty$ coincides
with the analytic tree $\tau_\o^\xi (K)$ of $K$ with 
respect to the virtual conformal structure $\xi$ constructed
above. Thus $K_\infty$ is a totally thin Cantor set 
uniquely determined from $K$ and $\xi$. Thus we don't
need to extract converging sub-sequences: The limit
$K'_\infty$ is 
uniquely determined. The limit of the mappings 
$(h_{\xi_n})$ is uniquely determined on $K$, but 
also outside $K$ by the potential outside of $K_\infty$
(as in the proof of theorem \ref{rectif1}).
Passing to the 
limit in the above relation we obtain on $E^{-1} (\G'(K))$,
$$
 h\circ \varphi_K^{-1} \circ E  = 
\varphi_{K_\infty}^{-1} \circ E\circ L.
$$
The other properties of the theorem are obtained as 
in theorem \ref{rectif2}.

\subsection {Generalized rectifications for continua.} \label{continua}

In this section we consider a {\it connected} compact set 
$K$ as above (so $\G_K =\emptyset$ and $\varphi_K$ is 
a conformal representation).

\begin{definition} \textbf{(Green equivalence in $\HH/\ZZ$).} \label{green_equivalent}
{Two conformal structures $\xi$ and $\eta$ on 
$\HH /\ZZ$ are Green equivalent if
there exists an absolutely continuous almost 
everywhere differentiable homeomorphism $L : \HH /\ZZ 
\to \HH /\ZZ$ of the form 
$$
L(x,y) = (x, h(y)),
$$
such that 
$$
\eta =L_* \xi.
$$
In that case $L$ extends into a homeomorphism
of ${\overline {\HH}}$ by the identity on $\TT=\RR /\ZZ$. 
The map $L$ is an absolutely continuous almost everywhere
differentiable homeomorphism if and only if  
$h :\RR_+ \to \RR_+$ is an absolutely 
continuous homeomorphism.}
\end{definition}

This is a well 
defined equivalence relation since the class of homeomorphisms $L$ considered is a composition subgroup.
Observe that $L$ leaves globally invariant vertical 
lines, i.e. Green lines for the potential $z\mapsto
\Im z$ in $\HH /\ZZ$.

We recall that we denote  $E(z)=e^{-2\pi i z}$.

\begin{definition} \textbf{(Green equivalence outside 
a compact connected set $K$).}{
Let $K\subset  {\CC}$ be a compact 
connected set as above.
Two conformal structures $\xi$ and $\eta$ on $\O=
{\overline {\CC}}-K$ are Green equivalent 
outside $K$ if  
\begin{align*}
\xi' &=(\varphi_K^{-1} \circ E)^* \xi ,\\
\eta' &=(\varphi_K^{-1} \circ E)^* \eta ,\\
\end{align*}
are Green equivalent on $\HH /\ZZ$.

Then there exists an absolutely continuous 
almost everywhere differentiable homeomorphism 
$l: {\overline {\CC}} -K \to {\overline {\CC}} -K$,
such that 
$$
\eta =l_* \xi,
$$
and $l$ leaves invariant Green lines, maps equipotentials
into equipotentials, and is
defined by
$$
l\circ \varphi_K^{-1} \circ E 
=\varphi_K^{-1}\circ E \circ L ,
$$
where $L:\HH \to \HH$ is the mapping realizing the 
equipotential equivalence of $\xi'$ and $\eta'$.
}
\end{definition}

The following result shows how are related the 
Morrey-Ahlfors-Bers rectification of two Green 
equivalent conformal structures that are quasi-conformal
(with respect to the standard complex structure $\s_0$).

\begin{theorem}\label{thm_green}{
Let $K\subset {\overline {\CC}}$ a compact connected
set as above. Assume that $0\in K$ and $1\in K$. 
Let $\xi$ and 
$\eta$ two conformal structures on $\O ={\overline {\CC}}
-K$ which are quasi-conformal with respect to $\s_0$ and 
Green equivalent outside $K$. Then the mapping 
$l$ is quasi-conformal (i.e. uniformly quasi-conformal 
on ${\overline {\CC}}-K$) 
and we can extend $\xi$ and 
$\eta$ into quasi-conformal structures 
on ${\overline {\CC}}$ defining 
$$
\xi_{/K} =\eta_{/K} =\s_{0 \ /K}.
$$

Let $h_{\xi} :({\overline {\CC}}, \xi) \to 
({\overline {\CC}}, \s_0)$ and 
$h_{\eta} :({\overline {\CC}}, \eta) \to 
({\overline {\CC}}, \s_0)$ be the unique 
Morrey-Ahlfors-Bers rectifications normalized so 
that 
\begin{align*}
h_{\xi} (0)=0 , \ 
h_{\xi} (1)=1 , \ h_{\xi}(\infty ) = \infty  ,\\
h_{\eta} (0) = 0 , \ h_{\eta} (1) =1 , \ 
h_{\eta }(\infty ) = \infty ,\\
\end{align*}
Then we have 
\begin{align*}
h_{\xi /K} &=h_{\eta \ /K} , \\
h_{\xi /{\overline {\CC}}-K} &=
h_{\eta /{\overline {\CC}}-K} \circ l .\\
\end{align*}
In particular we have
$$
h_{\xi} (K)=h_{\eta} (K).
$$
}
\end{theorem}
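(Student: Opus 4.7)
The strategy is to exhibit $g:=h_\eta\circ l$ (initially defined on $\overline{\CC}-K$) as an extension of $h_\xi$ via Rickman's removability theorem and then conclude by Morrey--Ahlfors--Bers uniqueness. First I verify that $l$ is uniformly quasi-conformal on $\overline{\CC}-K$. Since $\varphi_K^{-1}\circ E:\HH/\ZZ\to\overline{\CC}-K$ is a biholomorphism, it suffices to show $L:\HH/\ZZ\to\HH/\ZZ$ is quasi-conformal. The pullbacks $\xi'=(\varphi_K^{-1}\circ E)^*\xi$ and $\eta'=(\varphi_K^{-1}\circ E)^*\eta$ are quasi-conformal structures on $\HH/\ZZ$, and the relation $L_*\xi'=\eta'$ factors $L = R_{\eta'}^{-1}\circ\Psi\circ R_{\xi'}$ with $R_{\xi'},R_{\eta'}$ classical rectifications of $\xi',\eta'$ and $\Psi$ holomorphic, bounding the dilatation of $L$ by $K(\xi')\cdot K(\eta')<\infty$. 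Extending $\mu_\xi,\mu_\eta$ by zero on $K$ keeps their $L^\infty$-norm strictly below $1$, so $\xi,\eta$ extend to quasi-conformal structures on $\overline{\CC}$ with $\xi_{/K}=\eta_{/K}=\s_{0\ /K}$, and MAB yields the unique normalized rectifications $h_\xi,h_\eta:\overline{\CC}\to\overline{\CC}$ fixing $0,1,\infty$.

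Now set $g:=h_\eta\circ l$ on $\overline{\CC}-K$. This is quasi-conformal, and the chain rule gives $g^*\s_0 = l^*(h_\eta^*\s_0) = l^*\eta = \xi$, so $g$ has the same Beltrami coefficient $\mu_\xi$ as $h_\xi$ almost everywhere on $\overline{\CC}-K$. Extend $g$ to $\overline{\CC}$ by setting $g_{/K}:=h_{\xi /K}$. Granting that this extension is a homeomorphism of $\overline{\CC}$ (the technical point, discussed below), Rickman's removability (Theorem \ref{thm_rickman}), applied with $f=g$, the reference map $h_\xi$, and closed set $K$, upgrades $g$ to a quasi-conformal self-homeomorphism of $\overline{\CC}$ with $Dg=Dh_\xi$ almost everywhere on $K$. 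Therefore $\mu_g=\mu_\xi$ on all of $\overline{\CC}$, and $g,h_\xi$ both satisfy the same Beltrami equation with the common normalization $0\mapsto 0$, $1\mapsto 1$, $\infty\mapsto\infty$. Uniqueness in MAB forces $g\equiv h_\xi$ on $\overline{\CC}$, yielding $h_\xi=h_\eta\circ l$ on $\overline{\CC}-K$, $h_{\xi /K}=h_{\eta /K}$, and hence $h_\xi(K)=h_\eta(K)$.

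The main obstacle is the continuous homeomorphic extension of $g$ across $K$: for every sequence $z_n\to z_0\in K$ in $\overline{\CC}-K$ one must show $h_\eta(l(z_n))\to h_\xi(z_0)$. The decisive structural fact is that $L$ restricts to the identity on $\TT=\partial(\HH/\ZZ)$ (since $h:\RR_+\to\RR_+$ is a homeomorphism fixing the boundary point $0$), so $l$ preserves every external ray and every equipotential of $K$ and fixes each prime end of the simply connected domain $\overline{\CC}-K$. When $K$ is locally connected, Carathéodory's continuous extension of $\varphi_K^{-1}$ to $\overline{\DD}$ forces $l$ to extend by the identity on $K$, and then $g$ extends by $h_{\eta /K}$, which coincides with $h_{\xi /K}$ by the shared normalization at $0,1,\infty$ and the fact that $\xi$ and $\eta$ both agree with $\s_0$ on $K$. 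In the general case the same conclusion is reached through the prime-end correspondence: $h_\xi$ and $h_\eta$ induce identical prime-end boundary maps on $K$ because their Beltrami coefficients vanish there, and this together with the prime-end-fixing action of $l$ delivers the required continuity.
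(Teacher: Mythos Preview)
Your overall architecture (Rickman plus Morrey--Ahlfors--Bers uniqueness) matches the paper's, but there is a genuine circularity in the way you close the argument. You extend $g=h_\eta\circ l$ across $K$ by declaring $g_{/K}:=h_{\xi/K}$, and then must show that for $z_n\to z_0\in K$ one has $h_\eta(l(z_n))\to h_\xi(z_0)$. Once one knows $l(z_n)\to z_0$ (which you never actually establish), continuity of $h_\eta$ gives $h_\eta(l(z_n))\to h_\eta(z_0)$; so your extension is continuous precisely when $h_\eta(z_0)=h_\xi(z_0)$, which is the conclusion you are trying to prove. Your justification in the locally connected case, that $h_{\eta/K}$ ``coincides with $h_{\xi/K}$ by the shared normalization at $0,1,\infty$ and the fact that $\xi$ and $\eta$ both agree with $\s_0$ on $K$'', is exactly this circularity: the restriction of a global rectification to $K$ depends on the Beltrami coefficient on all of $\overline{\CC}$, not just on $K$. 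The prime-end sketch for general $K$ inherits the same gap.

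The paper avoids this by extending the other way: set $h_{/K}:=h_{\eta/K}$ and $h_{/\overline{\CC}-K}:=h_\eta\circ l$, and compare with $h_\eta$ (not $h_\xi$) in Rickman. Continuity of this extension reduces to showing that $l$ extends continuously by the identity on $K$, and for this the paper proves a short lemma (Lemma~\ref{lemma_poincare}): the special form $L(x,y)=(x,h(y))$ together with $C$-quasi-conformality forces $h$ to be $C$-bilipschitz, so $d_P(l(z),z)\le \log C$ in the Poincar\'e metric of $\overline{\CC}-K$, whence $l(z_n)-z_n\to 0$ as $z_n\to K$. With that in hand, $h$ is a homeomorphism equal to the global quasi-conformal map $h_\eta$ on $K$ and quasi-conformal off $K$; Rickman gives $\bar\partial h=\bar\partial h_\eta=0$ a.e.\ on $K$, so $h$ rectifies $\xi$, shares the normalization, and equals $h_\xi$ by uniqueness. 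The Poincar\'e-distance estimate is the missing ingredient that replaces your prime-end discussion and breaks the circularity.
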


\begin{lemma}\label{lemma_poincare}{If $L: \HH /\ZZ \to \HH /\ZZ$, 
$(x,y) \mapsto 
(x, h(y))$ is a $C$-quasi-conformal homeomorphism
then $h$ (and also $L$) 
is $C$-bilipschitz, i.e. for $y,y'\geq 0$,
$$
C^{-1} |y-y'| \leq |h(y)-h(y')| \leq C |y-y'|.
$$
In particular, if $l$ is related to $L$ as in 
definition \ref{green_equivalent}, if $d_P$ is the Poincar\'e distance 
in ${\CC}-K$, then $l$ is bounded away
from the identity for the uniform norm for $d_P$, more
precisely,for 
$z\in \CC-K$,
$$
d_P (l(z) ,z) \leq \log C.
$$
}
\end{lemma}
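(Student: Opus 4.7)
The plan is to handle the two claims separately, since the second reduces to the first via conformal invariance. For the bilipschitz statement, first compute $DL=\begin{pmatrix}1&0\\0&h'(y)\end{pmatrix}$ wherever $h'$ exists. Quasi-conformality of $L$ implies $L\in W^{1,2}_{\mathrm{loc}}$, and in particular $h$ is absolutely continuous along every vertical line (ACL). The complex dilatation of $L$ equals $(h'(y)-1)/(h'(y)+1)$ and the linear dilatation is $\max(h'(y),1/h'(y))$, so the hypothesis that $L$ is $C$-quasi-conformal gives $C^{-1}\le h'(y)\le C$ a.e. Integrating this derivative bound between $y$ and $y'$ yields the claimed bilipschitz inequality. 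Moreover, from Definition \ref{green_equivalent} the map $L$ extends to $\overline{\HH/\ZZ}$ as the identity on $\TT=\{y=0\}$, so $h(0)=0$; combined with bilipschitzness, this forces $C^{-1}y\le h(y)\le Cy$.

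For the Poincaré estimate, the idea is to transfer the problem from $\CC-K$ to $\HH/\ZZ$ using the conformal isomorphism $\varphi_K^{-1}\circ E:\HH/\ZZ\to\CC-K$, which exists because $K$ is connected. Biholomorphisms are isometries for the Poincaré metric, so
$$
d_P(l(z),z)\;=\;d_{\HH/\ZZ}\bigl(L(w),w\bigr),
$$
where $w=(\varphi_K^{-1}\circ E)^{-1}(z)$. The hyperbolic metric of $\HH/\ZZ$, inherited from its universal cover $\HH$, is simply $|dz|/y$. Estimating along the vertical segment from $(x,y)$ to $(x,h(y))$,
$$
d_{\HH/\ZZ}\bigl(L(w),w\bigr)\;\le\;\left|\int_{y}^{h(y)}\frac{dt}{t}\right|\;=\;\bigl|\log(h(y)/y)\bigr|\;\le\;\log C,
$$
where the last inequality uses the first part of the lemma.

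The only subtle point is the pointwise derivative bound for $h$: one must invoke the right ACL/Sobolev regularity for quasi-conformal maps of the cylinder $\HH/\ZZ$ to be sure that the matrix computation of the distortion is valid almost everywhere. This is standard and follows either from the usual ACL characterization of planar quasi-conformal maps lifted to $\HH$ or directly from the analytic definition of quasi-conformality; no genuinely hard step is involved. Once this is in place, the conformal invariance of the Poincaré metric and the explicit form $|dz|/y$ on $\HH/\ZZ$ make the second assertion immediate.
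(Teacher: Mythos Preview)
Your proof is correct and follows essentially the same route as the paper's: compute $DL$ to read off $C^{-1}\le h'\le C$ a.e.\ from the dilatation bound, integrate to obtain the bilipschitz inequality, and then use conformal invariance of the Poincar\'e metric together with the explicit form $|dz|/y$ on $\HH/\ZZ$ to get $d_{\HH/\ZZ}(L(w),w)=|\log(h(y)/y)|\le\log C$. You are in fact slightly more careful than the paper in invoking the ACL property and in noting $h(0)=0$ to pass from the bilipschitz bound to $C^{-1}y\le h(y)\le Cy$ (a step the paper leaves implicit).
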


\begin{proof}\textbf {(Lemma \ref{lemma_poincare}).}{The map $L$ is 
differentiable almost everywhere and 
we compute
$$
DL =\begin{pmatrix} 1 & 0 \\ 0 & h'(y) \\ \end{pmatrix}.
$$
Thus for almost every $y>0$, $|h'(y)| \leq C$ and the 
first statement follows. 
For the last statement we observe that it is enough to check
that $L-\id$ is bounded for the Poincar\'e distance 
$d_{\HH /\ZZ}$ of $\HH /\ZZ$. But 
$$
d_{\HH /\ZZ} (L(z),z)=\left | \log \left 
( \frac{h(y)}{y} \right ) \right | \leq \log C .
$$}
\end{proof}

\begin{proof} \textbf {(Theorem \ref{thm_green}).}
{Given $h_{\eta}$ we can define $h: {\overline {\CC}} 
\to {\overline {\CC}}$ by 
\begin{align*}
h_{/K} &=h_{\eta \ /K} , \\
h_{/{\overline {\CC}}-K} &=
h_{\eta \ /{\overline {\CC}}-K} \circ l .\\
\end{align*}

If $z_n\to z_0 \in K$ with 
$z_n \in {\overline {\CC}}-K$ then $l(z_n)-z_n \to 0$
because from lemma \ref{lemma_poincare} we know that $d_P (l(z_n) , z_n)$ is 
bounded. So $l(z_n) \to z_0$ and $l$ extends continuously 
to $K$ by the identity. Thus the map $h$ is a homeomorphism.
Moreover $h$ is quasi-conformal on ${\overline {\CC}}-K$
because $l$ is quasi-conformal there, and coincides with the  
global quasi-conformal homeomorphism $h_{\eta}$ in $K$.
Now Rickman's theorem (see Theorem \ref{thm_rickman}) 
implies that $h$ is quasi-conformal
and on $K$,
$$
\bar \partial h =\bar \partial h_{\eta} =0.
$$

So $h$ rectifies $\xi$ as is normalized as $h_{\xi}$. 
By uniqueness of Morrey-Ahlfors-Bers theorem we have 
$h_{\xi}=h$.}
\end{proof}

A corollary of this result justifies the terminology used:

\begin{corollary}{If $\xi$ and $\eta$ are Green 
equivalent as above, 
if $\eta$ happens to be the standard complex structure $\s_0$ 
then $h_{\xi} (K)=K$, 
and $h_{\xi}$ maps equipotentials 
of $K$ into equipotentials of $K$, and leaves 
globally invariant external rays of 
$K$.

Also if $h_{\eta}$ maps equipotentials of $K$ into 
equipotentials of $h_{\eta} (K)$, and 
external rays of $K$ into external rays of $h_\eta (K)$ 
then $h_{\xi}$ has 
the same property.
}
\end{corollary}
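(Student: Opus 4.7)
The plan is to extract both statements directly from Theorem \ref{thm_green} and the structural properties of the map $l$ arising in the definition of Green equivalence, so no new analytic machinery is needed.

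For the first statement, observe that when $\eta = \s_0$, the Morrey–Ahlfors–Bers rectification $h_\eta$ with the normalization $h_\eta(0)=0$, $h_\eta(1)=1$, $h_\eta(\infty)=\infty$ must be the identity, by the uniqueness clause of that theorem. Theorem \ref{thm_green} then gives $h_\xi(K) = h_\eta(K) = K$ and, on the complement, $h_\xi|_{\overline{\CC}-K} = h_\eta|_{\overline{\CC}-K} \circ l = l$. Now $l$ was defined by
$$
l \circ \varphi_K^{-1} \circ E = \varphi_K^{-1} \circ E \circ L,
$$
with $L(x,y)=(x,h(y))$. Because $L$ preserves each vertical line $\{x\}\times\RR_+$ and maps each horizontal line $\RR/\ZZ\times\{y\}$ onto a horizontal line, and because $\varphi_K^{-1}\circ E$ sends vertical lines to external rays of $K$ and horizontal lines to equipotentials of $K$, it follows that $l$ leaves each external ray of $K$ globally invariant and carries each equipotential of $K$ onto another equipotential. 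Hence $h_\xi$ has the same two properties.

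For the second statement, I again use the identity $h_\xi|_{\overline{\CC}-K} = h_\eta|_{\overline{\CC}-K} \circ l$ from Theorem \ref{thm_green}. The hypothesis on $h_\eta$ says it sends equipotentials of $K$ to equipotentials of $h_\eta(K)$ and external rays of $K$ to external rays of $h_\eta(K)$. Since $l$ permutes equipotentials of $K$ among themselves and leaves each external ray of $K$ globally invariant (by the same argument via $L$ as above), the composition $h_\eta \circ l$ sends equipotentials of $K$ to equipotentials of $h_\eta(K) = h_\xi(K)$, and external rays of $K$ to external rays of $h_\xi(K)$, which is exactly the claim.

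There is no serious obstacle: everything reduces to the factorization $h_\xi = h_\eta \circ l$ on $\overline{\CC}-K$ (Theorem \ref{thm_green}) together with the fact — immediate from the form $L(x,y)=(x,h(y))$ and the Green/external-ray coordinates $\varphi_K^{-1}\circ E$ — that $l$ respects the foliations by equipotentials and by external rays of $K$. The only slightly delicate point to double-check is that the identity $l\circ\varphi_K^{-1}\circ E = \varphi_K^{-1}\circ E\circ L$ does extend continuously to the closed external rays so that ``leaves invariant'' can be read both on the ray as a whole and on its relative interior; this is however guaranteed by the continuity of $l$ on $\overline{\CC}-K$ and the surjectivity of $\varphi_K$ onto its target.
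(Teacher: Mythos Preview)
Your proof is correct and follows essentially the same route as the paper: identify $h_\eta=\id$ when $\eta=\s_0$, use the factorization $h_{\xi\,/{\overline{\CC}}-K}=h_{\eta\,/{\overline{\CC}}-K}\circ l$ from Theorem~\ref{thm_green}, and read off the equipotential and external-ray behaviour of $l$ from the explicit form $L(x,y)=(x,h(y))$ in Green coordinates. The paper phrases this last step as ``$\varphi_K\circ l\circ\varphi_K^{-1}$ maps concentric circles to concentric circles and radial lines to radial lines,'' which is the same observation in the disk model rather than the half-cylinder model you use.
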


\begin{proof}{In the case that $\eta =\s_0$,
we have $h_{\eta}=\id$ and 
$h_{\xi /{\overline {\CC}}-K}=l$. The statement
follows from the fact that 
$\varphi_K\circ l\circ \varphi_K^{-1}$ 
maps circles centered at $0$ into concentric circles,
and radial lines into radial lines.

The second statement follows 
from $h_{\xi /{\overline {\CC}}
-K}=
h_{\eta /{\overline {\CC}}-K} \circ l$.}
\end{proof}

The last theorem suggest that when $\xi$ is Green  
equivalent to a quasi-conformal structure $\eta$ we 
can define a generalized rectification map using 
the rectification of $\eta$.

\begin{definition}\textbf {(Generalized rectification).}
{Let $\xi $ and $\eta$ be Green 
equivalent conformal structures 
on ${\overline {\CC}}-K$  with $\eta$ quasi-conformal
with respect to $\s_0$. We assume $0,1\in K$.
The generalized rectification of $\xi$
is $h_{\xi} : {\overline {\CC}} \to {\overline {\CC}}$
defined by 
\begin{align*}
h_{\xi /K} &=h_{\eta /K} , \\
h_{\xi /{\overline {\CC}}-K} &=
h_{\eta /{\overline {\CC}}-K} \circ l .\\
\end{align*}
Observe that $h_{\xi}$ is absolutely continuous but 
not necessarily almost everywhere differentiable on 
$K$ and not even continuous (!) because there is no 
reason that $l$ extends continuously by the identity 
on $K$ when $K$ is not locally connected. 
It is almost everywhere differentiable on 
${\overline {\CC}}-K$ and 
$$
(h_{\xi})_* \xi =\s_0.
$$
If $\partial K$ has measure $0$ then $h_{\xi}$ is 
almost everywhere differentiable, $(h_{\xi})_* \xi =\s_0$,
although $h_{\xi}$ may be discontinuous. When all 
external rays land at $K$ (in particular when $K$ is 
locally connected), the map $h_{\xi}$ is a 
homeomorphism of the Riemann sphere.
}
\end{definition}

We prove that the generalized rectification 
does not depend on the choice of $\eta$.

\medskip

\begin{remark}

\medskip

It is not difficult to construct non continuous generalized 
rectifications. Consider a continuum $K$ with a non landing
external ray $\g$. Choose a sequence of points $z_n\in \g$
with decreasing potential such that $|z_n -z_{n+1}|\geq \eps_0 >0$
for some $\eps_0 >0$. Then construct by interpolation an 
absolutely continuous homeomorphism $h:\RR_+ \to \RR_+$
such that $h (G_K (z_{n+1}))=G_K (z_n)$. The corresponding
map $l$ is the generalized rectification for $\xi =l^* \s _0$
in $\CC-K$ and does not extend continuously to the 
identity on $K$. But $h_\xi$ is the identity on $K$.

\end{remark}

\bigskip

The precedent corollary also holds for generalized 
rectifications 
(i.e. when $\xi$ is not assumed to be 
quasi-conformal) with the same proof.

The unsatisfactory part of the generalized rectification
is that in general it is not even a 
homeomorphism and also it is not 
almost everywhere differentiable.
We study this problem below and give some sufficient
conditions on the equipotential equivalence that 
implie the almost everywhere differentiability of 
the generalized rectification.

On the other hand, 
the generalized rectification can be regarded as a 
(highly singular) 
solution to a Beltrami equation with unbounded 
Beltrami form (i.e. $||\mu ||_{L^\infty} =1$).
Therefore it is satisfactory to have the following uniqueness 
result.

\begin{theorem} \textbf {(Uniqueness).}{If $\xi$ has a 
generalized rectification, then it is unique (i.e.
it does not depend on the choice of $\eta$).}
\end{theorem}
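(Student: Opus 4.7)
The plan is to reduce the uniqueness of the generalized rectification to Theorem~\ref{thm_green} applied to two \emph{quasi-conformal} structures. Suppose $\eta_1$ and $\eta_2$ are two quasi-conformal structures on $\overline{\CC}-K$, each Green equivalent to $\xi$, with associated homeomorphisms $l_1, l_2 : \overline{\CC}-K \to \overline{\CC}-K$ satisfying $\eta_i = (l_i)_* \xi$. Denote the two candidate generalized rectifications by $h_\xi^{(1)}$ (built from $\eta_1, l_1$) and $h_\xi^{(2)}$ (built from $\eta_2, l_2$). I want to show $h_\xi^{(1)} = h_\xi^{(2)}$.

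First, I would use the group structure of Green equivalences noted after Definition~\ref{green_equivalent}: the class of maps of the form $(x,y)\mapsto(x,h(y))$ on $\HH/\ZZ$ is closed under composition and inversion, so setting $l := l_2\circ l_1^{-1}$ gives an absolutely continuous almost everywhere differentiable self-homeomorphism of $\overline{\CC}-K$ that preserves Green lines and equipotentials, and satisfies $\eta_2 = l_* \eta_1$. Thus $\eta_1$ and $\eta_2$ are Green equivalent to each other via $l$.

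Since both $\eta_1$ and $\eta_2$ are quasi-conformal with respect to $\sigma_0$ (and $0,1\in K$ as required by the framework), Theorem~\ref{thm_green} applies to the pair $(\eta_1,\eta_2)$ and yields
\begin{align*}
h_{\eta_1 / K} &= h_{\eta_2 / K}, \\
h_{\eta_1 / \overline{\CC}-K} &= h_{\eta_2 / \overline{\CC}-K} \circ l.
\end{align*}
On $K$ the two candidates agree immediately: $h_\xi^{(1)} = h_{\eta_1} = h_{\eta_2} = h_\xi^{(2)}$. On $\overline{\CC}-K$ I substitute and use $l\circ l_1 = l_2\circ l_1^{-1}\circ l_1 = l_2$:
\[
h_\xi^{(1)} = h_{\eta_1}\circ l_1 = (h_{\eta_2}\circ l)\circ l_1 = h_{\eta_2}\circ l_2 = h_\xi^{(2)}.
\]

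The only delicate point — and the ``main obstacle'' — is to confirm that the hypothesis of Theorem~\ref{thm_green} is satisfied by the pair $(\eta_1,\eta_2)$, which requires checking that the composition $l = l_2\circ l_1^{-1}$ still has the specific form $(x,y)\mapsto(x,h(y))$ in the log-B\"ottcher coordinates; this is immediate from the fact that both factors act as the identity in the horizontal variable. Everything else is a direct algebraic consequence of the definitions, so uniqueness follows cleanly without revisiting the Morrey–Ahlfors–Bers machinery or Rickman's removability theorem.
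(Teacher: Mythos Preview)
Your reduction to Theorem~\ref{thm_green} is correct and is a cleaner packaging than the paper's own argument. The paper does not cite Theorem~\ref{thm_green} but instead works directly with the composition $h'\circ h^{-1}$: it observes that this map is the identity on $h(K)$, that on $\overline{\CC}-h(K)$ it equals $h_{\eta'}\circ(l'\circ l^{-1})\circ h_\eta^{-1}$ and is therefore quasi-conformal (since $l'\circ l^{-1}$ transports the QC structure $\eta$ to the QC structure $\eta'$), and that $l'\circ l^{-1}$ is bounded from the identity in the Poincar\'e metric by Lemma~\ref{lemma_poincare}, so $h'\circ h^{-1}$ extends to a global homeomorphism. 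Rickman's theorem then forces $h'\circ h^{-1}$ to be conformal, hence the identity by the normalization. In effect the paper re-runs the proof of Theorem~\ref{thm_green} in this special case, whereas you simply invoke that theorem once for the pair $(\eta_1,\eta_2)$ and finish by a two-line substitution. Your route is more economical; the paper's route makes the role of Rickman's removability explicit at this spot.

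One small point you gloss over: the ``group structure'' remark after Definition~\ref{green_equivalent} is not literally enough, because inverses and compositions of absolutely continuous increasing homeomorphisms of $\RR_+$ need not be absolutely continuous. What actually saves you (and what the paper spells out in its proof) is that $l=l_2\circ l_1^{-1}$ transports the quasi-conformal structure $\eta_1$ to the quasi-conformal structure $\eta_2$, hence $l$ is quasi-conformal, hence by Lemma~\ref{lemma_poincare} the corresponding $h$ is bilipschitz and in particular absolutely continuous. With that one sentence added, your argument is complete.
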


\begin{proof}{If $h$ and $h'$ are two generalized 
rectifications corresponding to two quasi-conformal
structures $\eta$ 
and $\eta'$, then $h'\circ h^{-1}$ is the identity
on $K$.
Moreover  on $\O ={\overline {\CC}}-K$ we have
$$
h'\circ h^{-1} = h_{\eta'} 
\circ (l'\circ l^{-1}) \circ h_{\eta}^{-1}.
$$
This is a composition of quasi-conformal mappings
on $\O$
because $l'\circ l^{-1}$ is quasi-conformal on $\O$ 
since it transports $\eta$ into $\eta'$. Also 
from lemma \ref{lemma_poincare} we get that the 
map $l'\circ l^{-1}$ is bounded from the identity 
in $\O$ for the Poincar\'e metric. Thus $h'\circ h^{-1}$
is continuous and it is a homeomorphism of the Riemann
sphere.
Now, $h'\circ h^{-1}$ is a homeomorphism, quasi-conformal
in ${\overline {\CC}}-K$, and coinciding with the 
identity on $K$.
We can use Rickman's theorem 
to conclude that $h'\circ h^{-1}$ is conformal, and 
then necessarily $h=h'$.}
\end{proof}

Now we have the following approximation theorem:

\begin{theorem} \label{thm_approx}{Let $\xi$ be a conformal structure
on ${\overline {\CC}} -K$ Green equivalent 
to a quasi-conformal structure $\eta$.

We assume that $\xi$ is quasi-conformal on compact 
subsets of $\CC -K$ (i.e. $l$ is quasi-conformal 
on compact subsets of $\CC-K$). Let $\mu$ be its
associated Beltrami form extended by $0$ on $K$.

Let $(\mu_n)_{n\geq 0}$ be a sequence of quasi-conformal
(i.e. $||\mu_n||_{L^\infty} <1$) Beltrami forms 
converging to $\mu$ almost everywhere
and such that $|\mu_n| \leq  |\mu |$. Let $\xi_n$ 
be the associated complex structure to $\mu_n$ and
$h_n : ({\overline {\CC}} , \xi_n) \to 
({\overline {\CC}}, \s_0 )$ be the Morrey-Ahlfors-Bers
rectification homeomorphism, normalized such that 
$h_n (0)=0$, $h_n (1)=1$ and $h_n (\infty )=\infty$.
We assume that the conformal structures $\xi_n$ are
Green equivalent to $\xi$. We made also the 
assumption that the modulus of an annulus 
bounded by $K$ and an equipotential of $K$ for the 
quasi-conformal structure $\xi_n$ is uniformly bounded 
on $n$ from above and away from $0$.
 
Then 
$$
\lim_{n\to +\infty} h_n =h_\xi
$$
uniformly on compact subsets of $\CC -K$.

Moreover, if the complex structures $\xi_n$ are 
Green equivalent to $\xi$ (such a 
sequence always exists from Proposition \ref{prop_approx}) 
then $h_n=h_\xi$ on $K$.}
\end{theorem}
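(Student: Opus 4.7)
The plan is to reduce the convergence of the rectifications $h_n$ to the convergence of the vertical reparametrizations provided by Green equivalence, and then to identify the limiting reparametrization as the identity by a rigidity argument.

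First, I would exploit the Green equivalences. Since $\xi_n$ is Green equivalent to $\xi$ and $\xi$ is Green equivalent to the quasi-conformal structure $\eta$, there exist absolutely continuous, almost everywhere differentiable homeomorphisms $l_n$ and $l$ of $\overline{\CC}-K$ with $\xi_n=(l_n)_*\xi$ and $\eta=l_*\xi$; in B\"ottcher coordinates these correspond to vertical-preserving maps $L_n(x,y)=(x,k_n(y))$ and $L(x,y)=(x,k(y))$. Composing yields $\eta=(l\circ l_n^{-1})_*\xi_n$, which exhibits $\xi_n$ and $\eta$ as Green equivalent via $\tilde l_n=l\circ l_n^{-1}$. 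Both $\xi_n$ and $\eta$ being quasi-conformal with respect to $\s_0$, Theorem~\ref{thm_green} applies and gives, after normalizing $h_\eta$ so that $h_\eta(0)=0$, $h_\eta(1)=1$, $h_\eta(\infty)=\infty$,
$$
h_n|_K=h_\eta|_K,\qquad h_n|_{\overline{\CC}-K}=h_\eta\circ l\circ l_n^{-1}.
$$
By the definition of the generalized rectification, $h_\xi|_K=h_\eta|_K$ and $h_\xi|_{\overline{\CC}-K}=h_\eta\circ l$. In particular the Moreover assertion $h_n=h_\xi$ on $K$ is immediate, and the convergence claim reduces to $l_n\to\id$ uniformly on compact subsets of $\CC-K$, equivalently $k_n\to\id$ uniformly on compact subsets of $\RR_+$.

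Second, I would extract subsequential limits of $(k_n)$. The modulus hypothesis asserts that for each equipotential level $y_0$ the corresponding annular modulus in $\xi_n$ stays in a compact subinterval of $(0,+\infty)$ uniformly in $n$; transported through the B\"ottcher coordinate this precisely means that $k_n(y_0)$ is trapped in such a subinterval independently of $n$. Since the $k_n$ are monotone increasing self-homeomorphisms of $\RR_+$, Helly's selection theorem produces a subsequence $k_{n_j}$ converging pointwise (and, by monotonicity, uniformly on compact subsets of $\RR_+$) to an increasing function $k_\infty$; the corresponding maps $l_{n_j}$ then converge to a vertical-preserving $l_\infty$ uniformly on compact subsets of $\CC-K$. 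To identify $k_\infty=\id$, I would use that $\mu_n\to\mu$ almost everywhere with $|\mu_n|\le|\mu|$ and that $\mu$ is bounded in $L^\infty$ on each compact subset of $\CC-K$, so by dominated convergence $\mu_n\to\mu$ in $L^1_{\mathrm{loc}}(\CC-K)$, which together with the uniform convergence of $l_{n_j}$ is enough to pass to the limit in $(l_{n_j})_*\xi=\xi_{n_j}$ and obtain $(l_\infty)_*\xi=\xi$. The map $l_\infty$ is therefore a vertical-preserving self-symmetry of $(\overline{\CC}-K,\xi)$, and since the modulus function $y\mapsto\mod_{\xi'}(\TT\times(0,y))$ is strictly increasing and must be preserved by any such self-symmetry, one concludes $k_\infty=\id$, hence $l_\infty=\id$. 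By the subsequence-uniqueness principle the full sequence $l_n\to\id$ uniformly on compact subsets of $\CC-K$, and consequently $h_n=h_\eta\circ l\circ l_n^{-1}\to h_\eta\circ l=h_\xi$ in the same topology.

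The main obstacle I foresee is the compatibility step: legitimately passing to the limit in the pushforward identity $(l_n)_*\xi=\xi_n$. Since the $\xi_n$ are not uniformly quasi-conformal up to $K$, the pushforward operation is not continuous in the naive $C^0$ topology of homeomorphisms, and one cannot simply invoke standard quasi-conformal compactness. The resolution is to exploit the vertical-preserving form of $l_n$, which reduces the question to the one-dimensional problem of controlling the scalar reparametrizations $k_n$; the modulus hypothesis and the domination $|\mu_n|\le|\mu|$ then furnish exactly the compactness and convergence needed.
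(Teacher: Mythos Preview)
Your reduction via Theorem~\ref{thm_green} to the formula $h_n=h_\eta\circ l\circ l_n^{-1}$ on $\CC-K$ (and $h_n=h_\eta=h_\xi$ on $K$) is correct and is essentially what the paper does as well: the paper sets $l_n=h_\eta^{-1}\circ h_n$ (which is your $l\circ l_n^{-1}$), extracts a subsequential limit $\hat l$, and shows $\hat l=l$ by the same rigidity you invoke (a Green-line-preserving conformal self-map with trivial angular action at $\infty$ must be the identity). So the architecture is the same: compactness plus rigidity of the limiting self-map.

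The difference lies in the compactness mechanism, and here your Helly argument has a gap. Helly's theorem yields a monotone limit $k_\infty$ and pointwise convergence at its continuity points, but your parenthetical ``and, by monotonicity, uniformly on compact subsets of $\RR_+$'' is only valid once $k_\infty$ is known to be continuous, which you have not established (the modulus hypothesis bounds the values of $k_n^{-1}$, not their oscillation). Without uniform convergence of $l_{n_j}$ you cannot pass to the limit in $(l_{n_j})_*\xi=\xi_{n_j}$; and even with it, your proposed justification (``$L^1_{\mathrm{loc}}$ convergence of $\mu_n$ plus uniform convergence of $l_{n_j}$'') is not enough, since pushforward of a conformal structure involves the \emph{derivative} of $l_{n_j}$, and uniform convergence says nothing about derivatives. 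The paper avoids this by taking compactness on the quasi-conformal side: the domination $|\mu_n|\le|\mu|$ with $|\mu|<1$ on each compact subset of $\CC-K$ makes the $h_n$ uniformly quasi-conformal there, hence uniformly H\"older, hence equicontinuous (this is the lemma preceding the proof). Any subsequential limit $h$ is then a locally quasi-conformal homeomorphism of $\CC-K$ whose Beltrami coefficient is the a.e.\ limit $\mu$ (standard continuity of the Beltrami solution in the coefficient, applied on each compact), so $\hat l=h_\eta^{-1}\circ h$ satisfies $\hat l_*\xi=\eta$ and the rigidity finishes. Your framework can be repaired by noting that $l_n=h_n^{-1}\circ h_\xi$ is itself uniformly locally quasi-conformal, yielding equicontinuity of $(l_n)$ directly---but that is the paper's mechanism transported, and Helly plays no essential role.
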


\begin{lemma}{The family $(h_n)_{n\geq 0}$ is 
equicontinuous in $\CC -K$.}
\end{lemma}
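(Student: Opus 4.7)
The plan is to combine uniform local quasi-conformality of $h_n$ on compacta of $\CC-K$ with a classical spherical modulus-to-diameter estimate, exploiting that $h_n$ fixes $\{0,1,\infty\}$ and that $\{0,1\}\subset K$. Given a compact $C\subset\CC-K$, choose a compact $\tilde{C}\subset\CC-K$ with $C\subset\mathrm{int}(\tilde{C})$. From $|\mu_n|\leq|\mu|$ pointwise and the quasi-conformality of $\xi$ on compacta of $\CC-K$, one has $\|\mu_n\|_{L^\infty(\tilde{C})}\leq\|\mu\|_{L^\infty(\tilde{C})}=:k<1$ uniformly in $n$, so each $h_n$ is $Q$-quasi-conformal on $\tilde{C}$ with $Q=(1+k)/(1-k)$ independent of $n$.

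Next, fix $R>0$ so that $\overline{D(z_0,R)}\subset\tilde{C}$ and $\overline{D(z_0,R)}\cap K=\emptyset$ for every $z_0\in C$; this is possible since $C$ is compact in $\CC-K$. For $0<r<R$ the round annulus $A_{r,R}(z_0)\subset\tilde{C}$ has modulus $(\log(R/r))/(2\pi)$, hence its image $h_n(A_{r,R}(z_0))$ is a topological annulus in $\overline{\CC}$ of modulus at least $(\log(R/r))/(2\pi Q)$, which tends to $+\infty$ as $r\to 0$, uniformly in $n$ and in $z_0\in C$. The outer complementary component $h_n(\overline{\CC}\setminus\overline{D(z_0,R)})$ contains the three fixed points $\{0,1,\infty\}$ (since $\{0,1\}\subset K$ lies outside $\overline{D(z_0,R)}$), so its spherical diameter is bounded below by $\delta_0:=\min(d_s(0,1),d_s(0,\infty),d_s(1,\infty))>0$, uniformly in $n$.

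By the classical modulus-to-spherical-diameter inequality (a standard consequence of Teichm\"uller's ring inequality, cf. \cite{[Le-Vi]}) --- a topological annulus in $\overline{\CC}$ of large modulus has $\min(d_s(E_0),d_s(E_1))$ small --- combined with the lower bound $\delta_0$ on the outer component, this forces the spherical diameter of the inner component $h_n(D(z_0,r))$ to tend to $0$ as $r\to 0$, uniformly in $n$ and $z_0\in C$. This yields spherical equicontinuity of $(h_n)$ on $C$; since $h_n(C)$ remains at definite spherical distance from $\infty$ (namely $\geq\delta_0$), it coincides with Euclidean equicontinuity on $C$.

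The main technical obstacle is the exact form of the spherical modulus-to-diameter estimate, which is a classical but nontrivial quasi-conformal ingredient; once accepted, the argument proceeds automatically. Notably, neither the hypothesis on moduli of equipotential annuli for $\xi_n$ nor the precise form of Green equivalence is required for this equicontinuity lemma --- those ingredients will enter in the next step, when identifying the pointwise limit of $(h_n)$ with the generalized rectification $h_\xi$.
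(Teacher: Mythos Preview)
Your proof is correct and follows essentially the same route as the paper's: uniform local quasi-conformality on compacta of $\CC-K$ (from $|\mu_n|\le|\mu|$), a round annulus around $z_0$, the modulus bound on its image, and then the Teichm\"uller ring estimate using that the outer complementary component contains fixed points in $K\cup\{\infty\}$. The paper only invokes $1$ and $\infty$ in the outer component while you use all three of $0,1,\infty$, and it phrases the conclusion as a uniform H\"older bound rather than spherical equicontinuity, but these are cosmetic differences.
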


\begin{proof}{The proof runs as the classical proof
of equicontinuity of families of uniformly quasi-conformal
homeomorphisms. We prove that the family $(h_n)_{n\geq 0}$ 
is uniformly H\"older on compact subsets of $\CC-K$.
Let $z_0 \in \CC -K$. Let $r_0 >0$ small enough 
such that
${\overline {B (z_0 , r_0)}}\subset \CC -K$
and $B(z_0 , r_0)$ does not contain one of the 
points $0$ or $1$, say $1$ for example 
(furthermore this disk does not 
contain $\infty$). Let $0<r <r_0$. The quasi-conformal 
mappings $h_n$ are $M$-quasi-conformal for a 
uniform $M$ on ${\overline {B(z_0 , r_0)}}$.
We consider the annulus
$$
A =B (z_0 , r_0) -{\overline {B(z_0 ,r)}}.
$$
We have 
$$
\mod h_n (A) \geq M^{-1} \mod A =M^{-1} \frac{1}{ 2 \pi}
\log \left ( \frac{r_0}{ r} \right ).
$$
The component of the complement of $A$ not containing 
$h_n (z_0)$ contains $1$ and $\infty$.
It follows from Teichmuller estimate ([Le-Vi]) that 
there exists a universal constant $C>0$ such 
that in the spherical metric
\begin{align*}
\diam h_n ({\overline {B (z_0 , r)}}) &\leq 
C e^{-2\pi \mod (h_n (A))} \\
&\leq C e^{M^{-1} \log \left ( \frac{r_0}{ r} \right )}=
C \left ( \frac{r_0}{ r} \right )^{M^{-1}}.\\
\end{align*}

This proves that the family $(h_n)_{n\geq 0}$ is uniformly 
H\"older on compact subsets of $\CC -K$.}
\end{proof}

\begin{proof}
{\textbf{(Theorem \ref{thm_approx})}}
{ Let $h$ be an accumulation point
of the family $(h_n)_{n\geq 0}$ in $\CC -K$,
$$
\lim_{k\to +\infty} h_{n_k} =h.
$$
The assumption on uniformity of the modulus of annulus
bounded by $K$ and an equipotential implies that 
the limit $h$ is not the infinite constant and that
$h(\CC -K )=\CC -K$.
As in the classical situation, the uniform quasi-conformality
on compact subsets of $\CC-K$ proves that such a limit 
$h$ is a homeomorphism of $\CC-K$ (see [Le-Vi]). Here 
the Green equivalence of $\xi_n$ to $\xi$ proves 
directly this fact.
Let $l_n =h_\eta^{-1} \circ h_n$. Then 
$\lim_{k\to +\infty} l_{n_k} =\hat l =h_\eta^{-1} \circ h$.
By construction $\hat l_* \xi =\eta$, so 
$(l\circ \hat l^{-1} )_* \eta =\eta$.
Now $l\circ \hat l^{-1} (\infty )=\infty$ and 
$l\circ \hat l^{-1} $ is a conformal automorphism
of $({\overline {\CC}}-K , \eta)$ whose angular action 
at $\infty$ is the identity. So $l=\hat l$ and 
$h=h_\eta \circ \hat l=h_\eta \circ l =h_\xi$.}
\end{proof}

\begin{proposition} \label{prop_approx}
{Given a conformal structure 
$\xi$ as in theorem \ref{thm_approx}, there exists a sequence 
of Green equivalent 
quasi-conformal conformal structures $(\xi_n)_{n\geq 0}$
such that the corresponding sequence of Beltrami forms 
$(\mu_n)_{n\geq 0}$ converge to $\mu_\xi$ 
almost everywhere and for $z\in \CC-K$,
$$
|\mu_n (z)| \leq |\mu (z) |.
$$
}
\end{proposition}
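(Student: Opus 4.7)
The plan is to construct each $\xi_n$ as a pullback $L_n^*\eta$, where $L_n$ is a bi-Lipschitz approximation of the Green-equivalence map $L:\HH/\ZZ\to\HH/\ZZ$ realizing $\xi=L^*\eta$. Writing $L(x,y)=(x,h(y))$ with $h$ an absolutely continuous homeomorphism of $\RR_+$, the hypothesis that $\xi$ is quasi-conformal on compact subsets of $\CC-K$ translates into $h$ being locally bi-Lipschitz on $(0,\infty)$, with degeneration only possible as $y\to 0^+$. I would then define $h_n'(y):=\max(1/n,\min(h'(y),n))$ almost everywhere, $h_n(y):=\int_0^y h_n'(t)\,dt$, and $L_n(x,y):=(x,h_n(y))$. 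The resulting $L_n$ is bi-Lipschitz, so $\xi_n:=L_n^*\eta$ is globally quasi-conformal, and one has $h_n\to h$ pointwise on $[0,\infty)$ together with $h_n'\to h'$ a.e.

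Two of the three required conclusions follow formally. Green equivalence between $\xi_n$ and $\xi$ is automatic: from $\xi_n=L_n^*\eta=(L^{-1}\circ L_n)^*\xi$ and $L^{-1}\circ L_n(x,y)=(x,h^{-1}(h_n(y)))$, the composition has the required vertical-stretch form. The almost-everywhere convergence $\mu_n\to\mu_\xi$ follows from the explicit Beltrami formula
\[
\mu_{\xi_n}(x,y)=\frac{\mu_{L_n}(y)+\mu_\eta(L_n(x,y))}{1+\mu_{L_n}(y)\,\mu_\eta(L_n(x,y))},\qquad \mu_{L_n}(y)=\frac{1-h_n'(y)}{1+h_n'(y)}\in\RR,
\]
together with $h_n'\to h'$ a.e., $L_n\to L$ pointwise, and $\|\mu_\eta\|_\infty<1$.

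The real content of the proof lies in the pointwise bound $|\mu_n(z)|\leq|\mu(z)|$ a.e. On the subset of $\HH/\ZZ$ where the truncation does not fire for any parameter below the given $y$, one has $L_n\equiv L$ and $\mu_n=\mu_\xi$ trivially. Off this set, $\mu_{L_n}$ is a real truncation of $\mu_L$ toward $0$: same sign, strictly smaller modulus. The inequality then reduces to a real-parameter M\"obius comparison of the form $|(\beta+w)/(1+\beta w)|\leq|(\alpha+v)/(1+\alpha v)|$ with $|\beta|\leq|\alpha|$ of common sign and with $v,w$ equal to $\mu_\eta$ at the two shifted base points $L(x,y)$ and $L_n(x,y)$ respectively. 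The monotonicity of $\alpha\mapsto|(\alpha+w)/(1+\alpha w)|$ in $\alpha$ is favourable in the half-plane $\{\Re(\alpha w)\geq 0\}$ but must be balanced against the base-point shift in the opposite half-plane. I expect the proof to handle this by refining the truncation---for instance a convex combination $h_n'=(1-t_n)h'+t_n$ with $t_n\to 0$ a.e., for which $\mu_{L_n}$ lies strictly between $\mu_L$ and $0$ on the real axis by construction---and by absorbing the base-point discrepancy using the local continuity of $\mu_\eta$ afforded by the quasi-conformality of $\eta$. This balance between M\"obius monotonicity and base-point continuity is the main obstacle.
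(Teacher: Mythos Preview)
Your argument diverges from the paper's at the decisive step. The paper does not carry the factor $\mu_\eta$ at all: it writes the Beltrami form of $\xi$ in the B\"ottcher coordinates simply as
\[
\mu_\xi(z)=\frac{\bar\partial L}{\partial L}=\frac{1-h'(y)}{1+h'(y)},
\]
which amounts to treating the target structure $\eta'$ as $\sigma_0$. It then sets $\varphi_n=\min(n,h')$, $h_n(y)=\int_0^y\varphi_n$, $L_n(x,y)=(x,h_n(y))$, and declares that the induced $\xi_n$ satisfy the conclusion. With that real scalar expression for $\mu_\xi$, the bound $|\mu_n|\le|\mu_\xi|$ is a one-line monotonicity check on $t\mapsto(1-t)/(1+t)$: where $h'\le n$ one has $\mu_n=\mu_\xi$, and where $h'>n\ge 1$ both are negative reals with $(n-1)/(n+1)<(h'-1)/(h'+1)$. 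No base-point shift enters because no $\mu_\eta$ enters.

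You instead keep $\eta$ general and write the full composition formula, and this is exactly what manufactures the obstacle you describe: the evaluation point $L_n(x,y)\ne L(x,y)$ moves inside the $\mu_\eta$ factor. Your proposed remedies remain speculative---in particular $\mu_\eta$ is merely an $L^\infty$ function, so an appeal to ``local continuity of $\mu_\eta$'' is unavailable---and the inequality is not established. In the paper's (tacit) specialization the difficulty simply does not arise, so the ``main obstacle'' you isolate is an artefact of the extra generality you are carrying, not a step the paper has to confront.

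One point in your favour: your two-sided truncation $h_n'=\max(1/n,\min(h',n))$ is what guarantees $L_n$ is bi-Lipschitz and hence that $\xi_n$ is genuinely quasi-conformal; the paper's one-sided $\min(n,h')$ leaves the lower bound on $h_n'$ unaddressed.
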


\begin{proof}{The conformal structure $\xi$ is 
Green equivalent to a quasi-conformal 
structure $\eta$. Let $L(x,y)=(x,h(y))$ be the 
corresponding mapping in the upper half plane.
The continuous strictly increasing map 
$h$ is almost everywhere
differentiable.
The differential of $L$ for almost 
all points $(x,y)$ 
is
$$
DL (x,y) =\begin{pmatrix} 1 & 0 \\ 0 & h'(y) \end{pmatrix} \ .
$$
So the Beltrami form of $\mu_\xi$ at the point 
$z\in \CC -K$
corresponding to the point $(x,y) \in \HH$
is
$$
\mu_\xi (z) =\frac{\bar \partial L}{\partial L} =
\frac{1 -h'(y)}{1+h'(y)} .
$$

We define $\varphi_n =\min (n ,h')$.
The sequence of functions $h_n (x) =\int_0^x \varphi_n (u) du$  
converges pointwise to $h$, and the mappings 
$L_n (x,y)=(x, h_n (y))$ define complex structures $\xi_n$ 
in ${\overline {\CC}}-K$ that satisfy the required 
properties.}
\end{proof}

\bigskip

Theorem \ref{thm_approx} together with Proposition 
\ref{prop_approx} show that if the generalized rectification is 
quasi-conformal on compact subsets of $\CC -K$
then it can be obtained as uniform limit on compact 
subsets of $\CC -K$ of quasi-conformal homeomorphisms
of the Riemann sphere.
Note that this result provides an alternative way 
to define the generalized rectification.

The problem of almost everywhere differentiability 
of the generalized rectification 
$h_{\xi}$ is equivalent to the problem of 
extension of $l$ to ${\overline {\CC}}$ into 
an almost everywhere differentiable homeomorphism.
The map $l$ extends radially to the identity 
for almost all $z\in K$ for the harmonic measure.
If $K$ is locally connected $l$ extend continuously 
to the identity on $K$ into a global homeomorphism
of the Riemann sphere. But we are looking for results that 
are independent of the structure of $K$.

In the following theorems we show that just 
a $C^0$ control on $l-\id$ on $\O$ 
for the Poincar\'e metric implies a differentiability 
result.

\begin{theorem}{Let $K$ be a full compact connected 
set in ${\overline {\CC}}$ with at least two points. 
If $l :{\overline {\CC}}
\to {\overline {\CC}}$ is a homeomorphism such that:

\item {$\bullet$} $l_{/K}=\id_K$ (so $l(K)=K$),

\item {$\bullet$} The map $z\mapsto 
d_P (l(z) , z)$ converges to $0$ when 
$z\to K$, $z\in {\overline {\CC}}-K$ 
(we denote by $d_P$ the Poincar\'e distance 
of ${\overline {\CC}}-K$).

Then $l$ is differentiable on all points of $K$ and 
$$
Dl_{/K} =\id.
$$
}
\end{theorem}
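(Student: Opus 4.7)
The plan is an elementary Euclidean-versus-Poincar\'e comparison. Fix $z_0 \in K$. Since $l(z_0)=z_0$ and $l_{/K}=\id$, proving that $l$ is differentiable at $z_0$ with $Dl(z_0)=\id$ reduces to showing
$$
l(z)-z = o(|z-z_0|) \qquad \text{as } z\to z_0,\ z\in \Omega,
$$
where $\Omega ={\overline {\CC}}-K$; the case $z\in K$ is automatic because $l$ is the identity there. By working in a chart we may assume $z_0\in \CC$: if $z_0=\infty$, apply a M\"obius change of coordinates sending some $z_0'\in K\setminus\{\infty\}$ to $\infty$ and $z_0$ to a finite point, noting that the assertion $Dl(z_0)=\id$ is invariant under such a change and that the Poincar\'e distance is a conformal invariant.

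The assumptions on $K$ (compact, connected, full, with at least two points) imply that $\Omega$ is a simply connected hyperbolic domain in ${\overline {\CC}}$. The Koebe $1/4$ theorem yields the standard two-sided comparison
$$
\frac{1}{4\,\operatorname{dist}(z,K)} \le \rho_\Omega (z) \le \frac{1}{\operatorname{dist}(z,K)}
$$
for the hyperbolic density $\rho_\Omega$. Pulling a short hyperbolic geodesic in $\Omega$ back to the upper half-plane via a Riemann map shows that there exist a universal constant $C>0$ and $r_0>0$ such that, whenever $d_P (w,z)<r_0$,
$$
|w-z| \le C\,\operatorname{dist}(z,K)\,d_P (w,z).
$$
(The geometric content is that a hyperbolic ball of small radius $r$ around $iy_0 \in \HH$ is, up to bounded distortion, a Euclidean ball of radius $\sim y_0\,r$, and Koebe distortion transports this to $\Omega$.)

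Now let $z\in\Omega$ tend to $z_0\in K$. The standing hypothesis forces $d_P (l(z),z)\to 0$, so eventually $d_P (l(z),z)<r_0$; combined with the trivial inequality $\operatorname{dist}(z,K)\le |z-z_0|$ this yields
$$
|l(z)-z| \le C\,|z-z_0|\,d_P(l(z),z) = o(|z-z_0|),
$$
which is exactly the required first-order expansion. The only step demanding real care is the Poincar\'e-to-Euclidean comparison above; everything else is bookkeeping. In particular the hypothesis that $K$ has at least two points is precisely what is needed to apply Koebe in the hyperbolic simply connected domain $\Omega$ and obtain the factor $\operatorname{dist}(z,K)$, which is what upgrades the qualitative statement $d_P(l(z),z)\to 0$ into the quantitative Euclidean estimate.
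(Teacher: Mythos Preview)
Your proof is correct and follows essentially the same approach as the paper: both arguments reduce to the Poincar\'e--Euclidean comparison $|l(z)-z|\le C\,\operatorname{dist}(z,K)\,d_P(l(z),z)$ together with the trivial bound $\operatorname{dist}(z,K)\le |z-z_0|$. The only cosmetic difference is that the paper obtains this comparison by integrating the density estimate $\tfrac{1}{2}\,\delta(z)^{-1}\le \rho_\Omega(z)$ along the Euclidean segment $[z,l(z)]$ (and must first check that this segment lies in $\Omega$), whereas you package the same content as ``small hyperbolic balls are comparable to Euclidean balls of radius $\sim \operatorname{dist}(z,K)\cdot r$'' via Koebe distortion, which sidesteps that auxiliary step.
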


\begin{proof}{Let $z_0\in K$ and $h\in \CC$, $h\not= 0$.
If $z_0+h \in K$ then 
$$
l(z_0+h)-l(z_0) =(z_0+h)-z_0=h.
$$
We consider the case $z_0+h \notin K$.

Let $\eps>0$. 
There exists $\d >0$ such that for any $h'\in \CC$,
$|h'|< \d$, we have $d_P (l(z_0+h'), z_0+h') \leq \eps$.
Now the Poincar\'e infinitesimal length is 
related to the Euclidean infinitesimal length by 
$$
\frac{1}{2} \frac{|dz|}{\d (z)} \leq ds_P (z) \leq 2 
\frac{dz}{ \d (z)},
$$
where $\d (z) =d_E (z, K)=\min_{w\in K} |z-w|$.
For $\eps $ small, if $|h| \leq \d$ then 
$[z_0+h, l(z_0+h)] \subset {\overline {\CC}}-K$
and 
$$
\d_0 =\max_{w\in [z_0+h, l(z_0+h)]} \ d_E (w,K) \leq C |h|,
$$
because $d_E (z_0+h, K) \leq  |h|$ and $|l(z_0+h) -(z_0 +h)|
\leq 2\d_0 \eps$ (we can put $C=(1-2\eps )^{-1}$.)

So we obtain,
$$
\frac{1}{2} \frac{1}{\d_0} |l(z_0+h) -(z_0+h)|\leq 
d_P(l(z_0+h), z_0+h) \leq \eps.
$$
And for $|h|\leq \d$,
$$
|l(z_0 +h)-z_0 -h|\leq 2\eps \d_0 \leq 2C \eps |h|,
$$
and
$$
\left | \frac{l(z_0+h) -z_0}{ h} -1\right | \leq 2C \eps.
$$
That means that $Dl (z_0) =\id$.}
\end{proof}

We can get a better result for almost everywhere
differentiability. The next theorem is such a result.
 The proof of the next theorem 
gives more: We can balance how far is
$l$ from the identity in the Poincar\'e metric to the porosity
of $K$. We leave other more precise statement for a future version.

\begin{theorem}{
Let $K$ be a full compact connected 
set in ${\overline {\CC}}$. If $l :{\overline {\CC}}
\to {\overline {\CC}}$ is a homeomorphism such that:

\item {$\bullet$} $l_{/K}=\id_K$ (so $l(K)=K$),

\item {$\bullet$} For $z\in {\overline {\CC}}$ 
the map $z\mapsto d_P (l(z),z)$ is 
bounded ($d_P$ is the Poincar\'e distance). Let 
$M>0$ be an upper bound.

Then $l$ is differentiable almost everywhere on $K$ and,
more precisely,  
for all Lebesgue density points  $z\in K$,
$$
D_z l =\id.
$$
}
\end{theorem}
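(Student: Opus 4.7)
The approach parallels the preceding theorem but replaces the pointwise hypothesis $d_P(l(z),z)\to 0$ by a quantitative argument that combines the uniform Poincar\'e bound with the density of $K$ at a Lebesgue density point. Writing $\delta(z)=d_E(z,K)$, the plan has three steps.

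\textbf{Step 1 (Euclidean estimate from the Poincar\'e bound).} I would upgrade the hypothesis to the bound
$$|l(z)-z|\le C(M)\,\delta(z),\qquad z\in\overline{\CC}-K,$$
with $C(M)=2Me^{2M}$. From the inequality $\tfrac12\,|dz|/\delta\le ds_P\le 2\,|dz|/\delta$ used in the proof of the preceding theorem, the Poincar\'e distance dominates the quasi-hyperbolic one up to a factor $2$: for any path $\gamma$, $\int_\gamma |dz|/\delta\le 2\int_\gamma ds_P$. Since $|d\log\delta|\le |dz|/\delta$, integration along the Poincar\'e geodesic $\gamma$ from $z$ to $l(z)$ shows that for every $w\in\gamma$, $|\log(\delta(w)/\delta(z))|\le 2\,d_P(z,w)\le 2M$, hence $\max_\gamma\delta\le e^{2M}\delta(z)$. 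Integrating $|dz|\le 2\delta\,ds_P$ along $\gamma$ then yields $|l(z)-z|\le 2Me^{2M}\delta(z)$.

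\textbf{Step 2 (Density controls $\delta$).} Fix a Lebesgue density point $z_0$ of $K$ and take $h\in\CC$ small. If $z_0+h\in K$, then $l(z_0+h)=z_0+h$ and the difference vanishes. Otherwise, since $z_0\in K$ we have $\delta(z_0+h)\le |h|$, so the open disk $B(z_0+h,\delta(z_0+h))$ is disjoint from $K$ and contained in $B(z_0,2|h|)$. Comparing areas,
$$\pi\,\delta(z_0+h)^2\le |B(z_0,2|h|)-K|.$$
By the density hypothesis, for every $\eps>0$ there exists $r_0>0$ with $|B(z_0,r)-K|\le \eps\,\pi r^2$ whenever $r\le r_0$; applied with $r=2|h|$ this gives $\delta(z_0+h)\le 2\sqrt{\eps}\,|h|$.

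\textbf{Step 3 (Conclusion).} Combining Steps 1 and 2, $|l(z_0+h)-(z_0+h)|\le 2C(M)\sqrt{\eps}\,|h|$ for $|h|$ small enough, so $(l(z_0+h)-z_0-h)/h\to 0$ as $h\to 0$; that is, $l$ is differentiable at $z_0$ with $D_{z_0}l=\id$. Since by Lebesgue's density theorem almost every point of $K$ is a density point, $l$ is differentiable almost everywhere on $K$ with derivative the identity, as asserted.

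The main obstacle is Step~1. In the preceding theorem the hypothesis $d_P(l(z),z)\to 0$ allowed a bootstrap along the Euclidean segment $[z,l(z)]$ when the Poincar\'e displacement was arbitrarily small, whereas here we have only a uniform bound $M$ that need not be small (so the Euclidean segment may meet $K$ and we cannot integrate along it). One must therefore work with the Poincar\'e geodesic itself and use the logarithmic control of $\delta$ along it to convert the Poincar\'e bound into the Euclidean bound $|l(z)-z|\le C(M)\delta(z)$; this is the only non-elementary step, after which the Lebesgue density argument is straightforward.
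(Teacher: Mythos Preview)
Your proof is correct and follows essentially the same route as the paper: convert the uniform Poincar\'e bound into an Euclidean estimate $|l(z)-z|\le C(M)\,\delta(z)$, then use Lebesgue density at $z_0$ to show $\delta(z_0+h)=o(|h|)$, and combine. Your Step~1 is in fact more carefully argued than the paper's version---the paper writes the inequality $\tfrac12\,\delta(z_0+h)^{-1}|l(z_0+h)-(z_0+h)|\le d_P(l(z_0+h),z_0+h)$ without explaining why $\delta$ stays controlled along the path, whereas your geodesic argument with the logarithmic bound on $\delta$ supplies precisely that missing detail and yields the explicit constant $2Me^{2M}$.
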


\begin{proof}
{By Lebesgue density theorem, for a.e. $z\in K$,
$$
\lim_{r\to 0} \frac{\lambda (B(z,r)\cap K)}{
\lambda (B(z,r))} =1.
$$
Thus for a set $K_0\subset K$ of full measure,
for any $z\in K_0$ there exists an increasing function
$\eps_z (r)>0$, such that 
$$
\lim_{r\to 0} \eps_z(r) =0,
$$
and 
$$
\frac{\lambda (B(z,r)\cap K)}{ 
\lambda (B(z,r))} \geq 1-\eps_z(r).
$$
We carry out the same proof as before for $z_0\in K_0$.
Given $\eps >0$ we choose $r_0>0$ such that 
$$
2 \eps_z(r_0 )^{1/2} M \leq \eps /2.
$$
Now if $r\leq r_0$, for any $w\in B (z, r/2)$, 
we have 
$$
\d (w) \leq r \eps_z(r )^{1/2},
$$
because $B(w, r \eps_z(r )^{1/2})$ must intersect
$K$  or we would have a too big hole in $B(z,r)$
incompatible with the Lebesgue density condition
at this scale.
Thus if $|h|=r/2<r_0/2$, 
$$
\d (z_0 +h) \leq 2 |h|  \eps_z(r )^{1/2}\leq 
2 |h|  \eps_z(r_0 )^{1/2}.
$$
So 
\begin{align*}
\frac{1}{ 2} \frac{1}{\d (z_0 +h) } |l(z_0 +h) -(z_0+h)|
&\leq d_P (l(z_0+h), z_0+h), \\
|l(z_0 +h) -(z_0+h)| &\leq 4 |h| \eps_z(r_0 )^{1/2} M, \\
\left | \frac{l(z_0 +h) -z_0}{ h}-1 \right | 
&\leq 4 \eps_z(r_0 )^{1/2} M, \\
\left | \frac{l(z_0 +h) -z_0}{ h}-1 \right | 
&\leq \eps. \\
\end{align*}
}
\end{proof}

\end{document}